\newtheorem{defn}{Definition}[section]
\newtheorem{thm}{Theorem}[section]
\newtheorem{lem}[thm]{Lemma}
\newtheorem{coro}[thm]{Corollary}
\newtheorem{rmk}[thm]{Remark}
\newtheorem{assum}{Assumption}[section]
\title{The Random Feature Method for Time-dependent Problems}
\author{Jingrun Chen}
\address{School of Mathematical Sciences, University of Science and Technology of China, Hefei 230026, China and Suzhou Institute for Advanced Research, University of Science and Technology of China, Suzhou 215123, China.}
\email {jingrunchen@ustc.edu.cn}
\author{Weinan E}
\address{AI for Science Institute, Beijing and Center for Machine Learning Research and School of Mathematical Sciences, Peking University.}
\email {weinan@math.pku.edu.cn}
\author{Yixin Luo}
\address{University of Science and Technology of China, Hefei 230026, China and Suzhou Institute for Advanced Research, University of Science and Technology of China, Suzhou 215123, China.}
\email[Corresponding author] {seeing@mail.ustc.edu.cn}
\dedicatory{Dedicated to Professor Tao Tang on the occasion of his 60th birthday}
\subjclass[2010]{Primary: 65M20,65M55,65M70.}
\keywords{time-dependent PDEs, partition of unity method, random feature method, collocation method, separation-of-variables random features}
\begin{document}


\begin{abstract}
We present a framework for solving time-dependent partial differential equations (PDEs) in the spirit of the random feature method. The numerical solution is constructed using a space-time partition of unity and random feature functions. Two different ways of constructing the random feature functions are investigated:
feature functions that treat the spatial and temporal variables (STC) on the same footing,  or functions that are the product of two random feature functions depending on spatial and temporal variables separately (SoV). Boundary and initial conditions are enforced by penalty terms. 
We also study two ways of solving the resulting least-squares problem:  the problem is solved as a whole or solved using the block time-marching strategy. 
The former is termed ``the space-time random feature method'' (ST-RFM). Numerical results for a series of  problems  show that the proposed method, i.e. ST-RFM with STC and ST-RFM with SoV, have  spectral accuracy in both space and time. In addition, ST-RFM only requires collocation points, not a mesh.
This is important  for solving problems with complex geometry. We demonstrate this by using ST-RFM to solve a two-dimensional wave equation over a complex domain.  The two strategies differ significantly in terms of the behavior in time. 
In the case when  block time-marching is used, we prove a lower error bound that shows an exponentially growing factor with respect to the number of blocks in time. For ST-RFM, we prove an upper bound with a sublinearly growing factor with respect to the number of subdomains in time. These estimates are also confirmed by  numerical results. 
\end{abstract}

\maketitle


\section{Introduction}

Time-dependent partial differential equations (PDEs), such as diffusion equation, wave equation, Maxwell equation, and Schr\"{o}dinger equation, are widely used for modeling the dynamic evolution of physical systems. Numerical methods, including finite difference method~\cite{leveque2007finite}, finite element methods~\cite{thomee2007galerkin}, and spectral methods~\cite{shen2011spectral}, have been proposed to solve these PDEs. Despite the great success in theory and application, these methods still face some challenges, to name a few, complex geometry, mesh generation, and possibly high dimensionality. 

Along another line, the success of deep learning in computer vision and natural language processing~\cite{goodfellow2016deep} attracts great attention in the community of scientific computing. As a special class of functions, neural networks are proved to be universal approximators to continuous functions~\cite{cybenko1989approximation}. Many researchers seek for solving ordinary and partial differential equations with neural networks~\cite{EHJ2017,HJE2018,weinan2018ritz,justin2018dgm,zang2020wan,raissi2019pinn,weinan2021algorithms}. Since the PDE solution can be defined in the variational (if exists), strong, and weak forms, deep Ritz method~\cite{weinan2018ritz}, deep Galerkin method~\cite{justin2018dgm} and physics-informed neural networks~\cite{raissi2019pinn}, and weak adversarial network~\cite{zang2020wan} are proposed using loss (objective) functions in the variational, strong, and weak forms, respectively. Deep learning-based algorithms have now made it fairly routine to solve a large class of PDEs in high dimensions without the need for mesh generation of any kind.

For low-dimensional problems, traditional methods are accurate, with reliable error control, stability analysis and affordable cost. However, in practice, coming up with a suitable mesh is often a highly non-trivial task, especially for complex geometry. On the contrary, machine-learning methods are mesh-free and only collocation points are needed. Even for low-dimensional problems, this point is still very attractive. What bothers a user is the lackness of reliable error control in machine-learning methods. For example, without an exact solution, the numerical approximation given by a machine-learning method does not show a clear trend of convergence as the number of parameters increases. 

There are some efforts to combine the merits of traditional methods and deep-learning based methods. The key ingredient is to replace deep neural networks by a special class of two-layer neural networks with the inner parameters fixed, known as random features \cite{Neal1995BayesianLF,NIPS2007_013a006f} or extreme learning machine~\cite{huang2006extreme}. Random feature functions are proved to be universal approximators as well, meanwhile only the parameters of the output layer need to be optimized, leading to a convex optimization problem. Extreme learning machines are employed to solve ordinary and partial differential equations in \cite{yang2018novel} and \cite{francesco2021extreme}, respectively.
Spectral accuracy is obtained for problems with analytic solutions, and the simplicity of network architectures reduces the training difficulty in terms of execution time and solution accuracy, compared to deep neural networks. In~\cite{dong2021local}, a special kind of partition of unity (PoU), termed as domain decomposition, is combined with extreme learning machines to approximate the PDE solution and the block time-marching strategy is proposed for long time simulations. Spectral accuracy is obtained in both space and time for analytic solutions, but the error grows exponentially fast in most cases as the simulation time increases. In~\cite{chen2022bridging}, combining PoU and random feature functions, the random feature method (RFM) is proposed to solve static PDEs with complex geometries. An automatic rescaling strategy is also proposed to balance the weights of equations and boundary conditions, which is found to work well for linear elasticity and Stokes flow over complex geometrices. 

The objective in this article is to propose a methodology for solving time-dependent PDEs that shares the merits of both traditional and machine learning-based algorithms. This new class of algorithms can be made spectrally accurate in both space and time. Meanwhile, they are also mesh-free,
making them easy to use even in settings with complex geometry. Our starting point is based on a combination of rather simple and well-known ideas: 
We use space-time PoU and random feature functions to represent the approximate solution, the collocation method to take care of the PDE as well as the boundary conditions in the least-squares sense, and a rescaling procedure to balance the contributions from the PDE and initial/boundary conditions in the loss function. This method is called ST-RFM. For time-dependent problems, a random feature function can depend on both spatial and temporal variables, i.e. space-time concatenation input (STC), or is the product of two random feature functions depending on spatial and temporal variables separately (SoV). STC can be viewed as natural extensions of random feature method for static problems~\cite{chen2022bridging}, while SoV may be a better choice for some time-dependent PDEs. Both STC and SoV are proved to be universal approximators. For long time intervals, PoU for the temporal variable, i.e. domain decomposition along the time direction, is propsed to solve time-dependent problems. Error estimates of the ST-RFM and the block-time marching strategy are provided. ST-RFM yields spectrally accurate results with slowly growing error in terms of the number of subdomains in time, while the error generated by the block time-marching strategy in \cite{dong2021local} grows exponentially fast in terms of the number of blocks. These findings are confirmed by numerical results in one and two dimensions.

This article is organized as follows. In Section~\ref{sec:method}, we present the ST-RFM and prove the approximation property of STC and SoV. Upper bound error estimate for ST-RFM and lower bound error estimate for the block time-marching strategy are provided. In Section \ref{sec:num}, numerical experiments in one and two dimensions for heat, wave, and Schr\"{o}dinger equations are conducted to show the spectral accuracy of the proposed method, and confirm the error estimates for long time simulations. Application of ST-RFM to a two-dimensional wave equation over a complex geometry is also provided. Conclusions are drawn in Section~\ref{sec:con}.

\section{Random Feature Method for Solving Time-dependent PDEs} \label{sec:method}

For completeness, we first recall the RFM for solving static problems. We then introduce the ST-RFM for solving time-dependent PDEs, and prove the universal approximation property and the error estimate.

\subsection{Random Feature Method for Static Problems} \label{sec:method:rfm}

Let $\boldsymbol{x}\in \Omega \subset \mathbb{R}^{d_x}$, where $d_x \in \mathbb{N}^+$ is the dimension of $\boldsymbol{x}$, and let $d_u \in \mathbb{N}^{+}$ be the dimension of output. Consider the following boundary-value problem
\begin{equation}
    \left\{
    \begin{aligned}
        \mathcal{L} \boldsymbol{u} (\boldsymbol{x}) &= \boldsymbol{f}(\boldsymbol{x}), & \quad \boldsymbol{x} \in \Omega, \\
        \mathcal{B} \boldsymbol{u} (\boldsymbol{x}) &= \boldsymbol{g}(\boldsymbol{x}), & \quad \boldsymbol{x} \in \partial \Omega,
    \end{aligned}
    \right.
    \label{eqn:method:pde}
\end{equation} 
where $\boldsymbol{f}$ and $\boldsymbol{g}$ are known functions, $\mathcal{L}$ and $\mathcal{B}$ are differential and boundary operators, respectively. $\partial \Omega$ is the boundary of $\Omega$.

The RFM has the following components. First, $N$ points $\{ \hat{\boldsymbol{x}}_i \}$ are chosen from $\Omega$, typically uniformly distributed. Then, $\Omega$ is decomposed to $N$ subdomains $\{ \Omega_i \}$ with $\hat{\boldsymbol{x}}_i \in \Omega_i$. Thus, we have $\Omega \subset \cup_i \Omega_i$. For each $\Omega_i$, a PoU function $\psi_i$ with support $\Omega_i$, i.e. $\mathrm{supp}(\psi_i) = \Omega_i$, is constructed. In one dimension, two PoU functions are commonly used:
\begin{subequations}
    \begin{align}
        \psi_{a}(x) &= \mathds{I}_{[-1, 1]}(x), \\
        \psi_b(x) &= \mathds{I}_{[-\frac{5}{4}, -\frac{3}{4}]}(x) \frac{1 + \sin(2 \pi x)}{2} + \mathds{I}_{[-\frac{3}{4}, \frac{3}{4}]}(x) + \mathds{I}_{[\frac{3}{4}, \frac{5}{4}]}(x) \frac{1 - \sin(2 \pi x)}{2}.
    \end{align}
\end{subequations}
The first PoU is discontinuous while the second one is continuously differentiable. In high dimensions, the PoU function $\psi$ can be constructed as a tensor product of $d_x$ one-dimensional PoU functions $\psi$, i.e. $\psi({\boldsymbol{x}}) = \Pi_{i=1}^{d_x} \psi(x_i)$.

Then, for each $\Omega_i$, RFM constructs $J_n \in \mathbb{N}^+$ random features $\phi_{ij}$ by a two-layer neural network with random but fixed parameters $\boldsymbol{k}_{ij}$ and $b_{ij}$, i.e. 
\begin{equation}
    \phi_{ij}(\boldsymbol{x}) = \sigma(\boldsymbol{k}_{ij}^\top l_i(\boldsymbol{x}) + b_{ij}), \qquad j=1, 2, \cdots, J_n, \label{eqn:method:phi}
\end{equation}
where the nonlinear activation function $\sigma$ is chosen as tanh or trigonometric functions in~\cite{chen2022bridging}. In~\eqref{eqn:method:phi}, each component of $\boldsymbol{k}_{ij}$ and $b_{ij}$ is independently sampled from $\mathds{U}(-R_m, R_m)$, where $R_m \in \mathbb{R}^+$ controls the magnitude of parameters. In particular, both the weights and the biases are fixed in the optimization procedure. Moreover, $l_i(\boldsymbol{x})$ is a linear transformation to transform inputs in $\Omega_i$ to $[-1, 1]^{d_x}$. The approximate solution in RFM is the linear combination of these random features together with the PoU
\begin{equation}
    u_{M}(\boldsymbol{x}) = \sum_{i=1}^{N} \psi_i(\boldsymbol{x}) \sum_{j=1}^{J_n} u_{ij} \phi_{ij}(\boldsymbol{x}), \label{eqn:method:rfm}
\end{equation}
where $u_{ij} \in \mathbb{R}$ are unknown coefficients to be sought and $M = N \cdot J_n$ denotes the degree of freedom. For vectorial solutions, we approximate each component of the solution by~\eqref{eqn:method:rfm}, i.e.
\begin{equation}
    \boldsymbol{u}_{M}(\boldsymbol{x}) = (u_{M}^1(\boldsymbol{x}), \cdots, u_{M}^{d_u}(\boldsymbol{x}))^\top. \label{eqn:method:rfm-vec}
\end{equation}

To find the optimal set of parameters $\{u_{ij}\}$, RFM evaluates the original problem on collocation points and formulates a least-squares problem. To be specific, RFM samples $Q \in \mathbb{N}^+$ collocation points $\boldsymbol{x}_{i,1}, \cdots, \boldsymbol{x}_{{i,Q}}$ in each $\Omega_i$, and computes the rescaling parameters $\lambda_{i,k,q}^{p} > 0$ and $\lambda_{i,k,e}^{b} > 0$ for $i \in [N]$, $k \in [d_u]$, $q \in [Q]$ and $e \in [Q]$ satisfying $\boldsymbol{x}_{i, e} \in \partial \Omega$. Let $\lambda_{i,q}^{p} = \mathrm{diag}(\lambda_{i,1,q}^{p}, \cdots, \lambda_{i,d_u,q}^{p})^\top$ and $\lambda_{i,e}^{b} = \mathrm{diag}(\lambda_{i,1,e}^{b}, \cdots, \lambda_{i,d_u,e}^{b})^\top$. Then, the random feature method minimizes
\begin{equation}
    \mathrm{Loss}(\{ u_{i,j,k} \}) = \sum_{i=1}^{N} \left( \sum_{q=1}^{Q} \| \lambda^{p}_{i,q} (\mathcal{L} \boldsymbol{u}_{M} (\boldsymbol{x}_{i,q}) - \boldsymbol{f}(\boldsymbol{x}_{i,q}) ) \|_{2}^2 \right) + \left( \sum_{\boldsymbol{x}_{i, e} \in \partial \Omega} \| \lambda^{b}_{i,e} (\mathcal{B} \boldsymbol{u}_{M} (\boldsymbol{x}_{i,e}) - \boldsymbol{g}(\boldsymbol{x}_{i,e})) \|_{2}^2 \right), \label{eqn:method:ls}
\end{equation}
where $\boldsymbol{u}_{M}$ is of the form~\eqref{eqn:method:rfm-vec} and $u_{i,j,k}$ is the $k$-th coefficient of the random feature $\phi_{ij}$.

The above problem~\eqref{eqn:method:ls} is a linear least-squares problem when $\mathcal{L}$ and $\mathcal{B}$ are linear operators. Moreover, when the discontinuous PoU $\psi_a$ is used, continuity conditions between adjacent subdomains must be imposed by adding regularization terms in~\eqref{eqn:method:ls}, while no regularization is required when $\psi_b$ is used for second-order equations. By minimizing~\eqref{eqn:method:ls}, the optimal coefficients $\boldsymbol{u}^* = (u_{ijk}^{*},)^\top$ are obtained and the numerical solution is constructed by~\eqref{eqn:method:rfm-vec}.

\subsection{Space-Time Random Feature Method} \label{sec:method:stpou}

Now, we consider time-dependent PDEs of the following form with the final time $T > 0$
\begin{equation}
    \left\{
    \begin{aligned}
        \mathcal{L} \boldsymbol{u} (\boldsymbol{x}, t) &= \boldsymbol{f} (\boldsymbol{x}, t), & \quad \boldsymbol{x}, t \in \Omega \times [0, T], \\
        \mathcal{B} \boldsymbol{u} (\boldsymbol{x}, t) &= \boldsymbol{g} (\boldsymbol{x}, t), & \quad \boldsymbol{x}, t \in \partial \Omega \times [0, T], \\
        \mathcal{I} \boldsymbol{u} (\boldsymbol{x}, 0) &= \boldsymbol{h} (\boldsymbol{x}), & \quad \boldsymbol{x} \in \Omega, \\
    \end{aligned}
    \right.
    \label{eqn:method:tpde}
\end{equation}
where $\boldsymbol{f}$, $\boldsymbol{g}$ and $\boldsymbol{h}$ are known functions and $\mathcal{I}$ is the initial operator.

Following the same routine as in RFM, we construct a partition of $\Omega \times [0, T]$. First, we decompose the spatial domain $\Omega$ to $N_x$ subdomains where each subdomain $\Omega_i$ contains a central point $\hat{\boldsymbol{x}}_i$. We then decompose the temporal interval $[0, T]$ to $N_t$ subdomains, i.e. $[0, T) = [t_0, t_1) \cup [t_1, t_2), \cdots, [t_{N_t - 1}, t_{N_t})$, where each subdomain contains a central point $\hat{t}_i = \frac{t_{i-1} + t_{i}}{2}$. The product of the PoUs in space and in time results in the space-time PoU, i.e.
\begin{equation}
    \psi_{i_x, i_t}(\boldsymbol{x}, t) = \psi_{i_x}(\boldsymbol{x}) \psi_{i_t}(t),
\end{equation}
where $i_x$ and $i_t$ are indices for spatial and temporal subdomains, respectively.

Next, we generalize spatial random features~\eqref{eqn:method:phi} to space-time random features. There are two options. The first can be viewed as a natural extension of \eqref{eqn:method:phi}, where the concatenation of spatial and temporal variables is fed into $\phi_{ij}$ as the input and the output is a random feature function, i.e.
\begin{equation}
    \phi_{i_x, i_t, j}(\boldsymbol{x}, t) = \sigma ((\boldsymbol{k}_{i_x, i_t, j}^{x})^\top l_{i_x}(\boldsymbol{x}) + k^{t}_{i_x, i_t, j} l_{i_t}(t) + b_{i_x, i_t, j}). \label{eqn:method:concat}
\end{equation}
Here $\boldsymbol{k}_{i_x, i_t, j}^{x}$ and $k_{i_x, i_t, j}^{t}$ are the weights associated with spatial and temporal inputs, respectively. $b_{i_x, i_t, j}$ is the bias, $l_{i_x}(\boldsymbol{x})$ and $l_{i_t}(t)$ are linear transformations from $\boldsymbol{x} \in \Omega_{i_x} $ to $[-1, 1]^{d_x}$ and from $t \in [t_{i_t - 1}, t_{i_t}]$ to $[-1, 1]$, respectively. \eqref{eqn:method:concat} is called STC.

The second option is to use separation of variables, which mimics the technique of separation of variables for solving PDEs
\begin{equation}
    \phi_{i_x, i_t, j}(\boldsymbol{x}, t) = \sigma ((\boldsymbol{k}_{i_x, i_t, j}^{x})^\top l_{i_x}(\boldsymbol{x}) + b^{x}_{i_x, i_t, j}) \sigma (k^{t}_{i_x, i_t, j} l_{i_t}(t) + b_{i_x, i_t, j}^{t}). \label{eqn:method:sov}
\end{equation}
In this formulation, the space-time random feature is a product of the spatial random feature and the temporal random feature, thus we term it as SoV. For both random features, the degrees of freedom $M = N_x \cdot N_t \cdot J_n$. 

The combination of these space-time random features and PoU leads to the following approximate solution in the scalar case
\begin{equation}
    u_{M}(\boldsymbol{x}, t) = \sum_{i_x=1}^{N_x} \sum_{i_t=1}^{N_t} \psi_{i_x}(\boldsymbol{x}) \psi_{i_t}(t) \sum_{j=1}^{J_n} u_{i_x,i_t,j} \phi_{i_x,i_t,j}(\boldsymbol{x}, t), \label{eqn:method:tnsol}
\end{equation}
where $u_{i_x, i_t, j}$ is the unknown coefficient to be sought. A vectorial solution $\boldsymbol{u}_{M}$ can be formulated in the same way as that in~\eqref{eqn:method:rfm-vec}, i.e.
\begin{equation}
    \boldsymbol{u}_{M}(\boldsymbol{x}, t) = (u_{M}^{1}(\boldsymbol{x}, t), \cdots, u_{M}^{d_u}(\boldsymbol{x}, t))^\top. \label{eqn:method:tnsol-vec}
\end{equation}
For the $k$-th component of $\boldsymbol{u}_{M}$, we denote the coefficient associated to $\phi_{i_x,i_t,j}$ by $u_{i_x, i_t, j, k}$.

By substituting \eqref{eqn:method:tnsol} or \eqref{eqn:method:tnsol-vec} into \eqref{eqn:method:tpde}, we define the loss function on collocation points. Denote $\{\boldsymbol{x}_{i_x, i_t, 1}, \cdots, \boldsymbol{x}_{i_x, i_t, Q_x}\}$ spatial collocation points in the $i_x$-th spatial subdomain and $(i_t - 1) T/N_t = t_{i_x, i_t} = t_{i_x, i_t, 0} < t_{i_x, i_t, 1} < \cdots < t_{i_x, i_t, Q_t} = i_t T/N_t$ temporal collocation points in the $i_t$-th temporal subdomain, respectively. In RFM, we define the rescaling parameters $\lambda_{i_x, i_t, q_x, q_t}^{e}$, $\lambda_{i_x, i_t, e, q_t}^{b}$ and $\lambda_{i_x, i_t, q_x}^{i}$ for $i_x \in [N_x]$, $i_t \in [N_t]$, $q_x \in [Q_x]$, $q_t \in [Q_t]$ and $e \in [Q_x]$ satisfying $\boldsymbol{x}_{i_x, i_t, e} \in \partial \Omega$. Then, the ST-RFM minimizes 
\begin{equation}
    \begin{aligned}
        \mathrm{Loss}(\{ u_{i_x, i_t, j, k} \}) = \sum_{i_x=1}^{N_x} \sum_{i_t=1}^{N_t} & \left( \sum_{q_x=1}^{Q_x} \sum_{q_t=1}^{Q_t} \| \lambda^{e}_{i_x, i_t, q_x, q_t} (\mathcal{L} \boldsymbol{u} (\boldsymbol{x}_{i_x, i_t, q_x}, t_{i_x, i_t, q_t}) - \boldsymbol{f}(\boldsymbol{x}_{i_x, i_t, q_x}, t_{i_x, i_t, q_t})) \|^2 \right) + \\
        &\left( \sum_{\boldsymbol{x}_{i_x, i_t, e} \in \partial \Omega} \sum_{q_t=1}^{Q_t} \| \lambda^{b}_{i_x, i_t, e, q_t} (\mathcal{B} \boldsymbol{u} (\boldsymbol{x}_{i_x, i_t, e}, t_{i_x, i_t, q_t}) - \boldsymbol{g}(\boldsymbol{x}_{i_x, i_t, e}, t_{i_x, i_t, q_t})) \|^2 \right) + \\
        & \mathds{I}_{i_t = 1} \left( \sum_{q_x=1}^{Q_x} \| \lambda^{i}_{i_x, i_t, q_x} (\mathcal{I} \boldsymbol{u} (\boldsymbol{x}_{i_x, i_t, q_x}, 0) - \boldsymbol{h}(\boldsymbol{x}_{i_x, i_t, q_x})) \|^2 \right)
    \end{aligned}
    \label{eqn:method:ls-st}
\end{equation}
to find the numerical solution of the form~\eqref{eqn:method:tnsol-vec}. Moreover, when $N_x > 1$ or $N_t > 1$ and $\psi_a$ is used, continuity conditions between adjacent spatial (temporal) subdomains must be imposed by adding regularization terms in~\eqref{eqn:method:ls-st}, while no regularization is required when $\psi_b$ is used for second-order equations.

For better illustration of this method, we consider an example with $d_u = 1$, the initial operator $\mathcal{I}$ being identity and rescaling parameters being one. For convenience, we set $N_x = 1$ and relabel $i_t$ by $i$ without confusion. Under these settings, we introduce two matrix-valued functions $\mathbf{\Phi}_{i}(t)$, $\mathbf{L}_i(t)$ and three induced matrices as follows:
\begin{align*}
    \mathbf{\Phi}_{i}(t) =& (\phi_{i, j}(\boldsymbol{x}_{i, q}, t),) \in \mathbb{R}^{Q_x \times J_n}, \quad \mathbf{\Phi}_{i,0} = \mathbf{\Phi}_{i}(t_{i, 0}), \quad \mathbf{\Phi}_{i,1} = \mathbf{\Phi}_{i}(t_{i, Q_t}), \\
    \mathbf{L}_{i}(t) =& (\mathcal{L} \phi_{i, j} (\boldsymbol{x}_{i, q}, t),) \in \mathbb{R}^{Q_x \times J_n}, \quad \mathbf{L}_{i} = [\mathbf{L}_{i}(t_{i, 0})^\top, \cdots, \mathbf{L}_{i}(t_{i, Q_t - 1})^\top]^{\top}, 
\end{align*}
where $q$ is the row index and $j$ is the column index.
Then, we construct the matrix $\mathbf{A}$ as follows:
\begin{equation*}
    \mathbf{A}_{i} = [\mathbf{0}_{(J_n \times (i-1) (Q_t + 1)Q_x)}, \mathbf{\Phi}^\top_{i, 0}, \mathbf{L}_{i}^\top, -\mathbf{\Phi}_{i, 1}^\top, \mathbf{0}_{(J_n \times ((N_t - i)(Q_t + 1) - 1)Q_x)}]^\top, \quad \mathbf{A} = [\mathbf{A}_{1}, \cdots, \mathbf{A}_{N_t}].
\end{equation*}
Define the following vectors
\begin{equation*}
    \begin{aligned}
        & \boldsymbol{h} = [h(\boldsymbol{x}_{1, 1}), h(\boldsymbol{x}_{1, 2}), \cdots, h(\boldsymbol{x}_{1, Q_x})]^\top, \\ 
        & \boldsymbol{f}_{i,j} = [f(\boldsymbol{x}_{i, 1}, t_{i, j}), f(\boldsymbol{x}_{i, 2}, t_{i, j}), \cdots, f(\boldsymbol{x}_{i, Q_x}, t_{i, j})]^\top, \\ 
        & \boldsymbol{f}_{i} = [\boldsymbol{f}_{i, 0}^{\top}, \boldsymbol{f}_{i, 1}^{\top}, \cdots, \boldsymbol{f}_{i, Q_t-1}^{\top}]^\top,
    \end{aligned}
\end{equation*}
then we construct the vector $\boldsymbol{b} = \sum_{i=1}^{N_t} \boldsymbol{b}_i$, where
\begin{equation*}
    \begin{aligned}
        & \boldsymbol{b}_1 = [\boldsymbol{h}^\top, \boldsymbol{f}_1^\top, \boldsymbol{0}_{((N_t-1)(Q_t+1)Q_x)}^\top]^\top, \\
        & \boldsymbol{b}_i = [\boldsymbol{0}_{(((i-1)(Q_t+1)+1)Q_x)}^\top, \boldsymbol{f}_i^\top, \boldsymbol{0}_{((N_t-i)(Q_t+1)Q_x)}^\top], \quad \mathrm{for} \; i=2, \cdots, N_t.
    \end{aligned}
\end{equation*}
Let $\boldsymbol{u} \in \mathbb{R}^{N_t J_n}$, then the optimal coefficient $\boldsymbol{u}^{S}$ by ST-RFM is obtained by
\begin{equation}
    \boldsymbol{u}^{S} = \min_{\boldsymbol{u}} \| (\mathbf{A}\boldsymbol{u} - \boldsymbol{b}) \|^2.
    \label{eqn:method:strfm}
\end{equation}

For time-dependent partial differential equations, Dong et al.~\cite{dong2021local} proposed the block time-marching strategy. Block time-marching strategy solves Eq.~\eqref{eqn:method:tpde} in each time block individually and applies the numerical results in $i$-th block at the terminal time as the initial conditions of $(i+1)$-th block. Let $N_b \in \mathbb{N}^+$ be the number of time blocks, and let $N_t = 1$ for simplicity. For first-order equations in time, we present the detailed process of block time-marching strategy in Algorithm~\ref{alg:method:timeblock}, and the optimal coefficient $\boldsymbol{u}^{B}$ is the concatenation of optimal coefficients in all time blocks.
\begin{algorithm}[t]
    \SetAlgoLined
    \LinesNumbered
    \KwOut{$\boldsymbol{u}^{B}$}
    $\tilde{\mathbf{A}}_{1} = [\boldsymbol{\Phi}_{1, 0}^\top, \boldsymbol{L}_{1}^\top ]^\top$, $\tilde{\boldsymbol{b}}_{1} = [\boldsymbol{g}^\top, \boldsymbol{f}_{1}^\top]^\top$ \;
    $\boldsymbol{u}_{1}^{B} = \min_{\boldsymbol{u}} \| \tilde{\mathbf{A}}_{1} \boldsymbol{u} - \tilde{\boldsymbol{b}}_{1} \|^2$ \;
    \For{i=2,3,$\cdots$,$N_b$}{
        $\tilde{\mathbf{A}}_{i} = [\boldsymbol{\Phi}_{i, 0}^\top, \boldsymbol{L}_{i}^\top ]^\top$, $\tilde{\boldsymbol{b}}_{i} = [ (\boldsymbol{u}_{i-1}^{B})^{\top} \boldsymbol{\Phi}_{i-1, 1}^\top, \boldsymbol{f}_{i}^\top]^\top$ \;
        $\boldsymbol{u}_{i}^{B} = \min_{\boldsymbol{u}} \| \tilde{\mathbf{A}}_{i} \boldsymbol{u} - \tilde{\boldsymbol{b}}_{i} \|^2$ \;
    }
    $\boldsymbol{u}^{B} = [(\boldsymbol{u}_{1}^{B})^\top, (\boldsymbol{u}_{2}^{B})^\top, \cdots, (\boldsymbol{u}_{N_b}^{B})^\top]^\top$ \;
    \caption{Block time-marching strategy to solve time-dependent PDEs}
    \label{alg:method:timeblock}
\end{algorithm}

\subsection{Approximation Properties} \label{sec:method:theory1}

Both concatenation random features and separation-of-variables random features are universal approximators. To prove these, we first recall the definition of sigmoidal functions in~\cite{cybenko1989approximation}.
\begin{defn}
    $\sigma$ is said to be sigmoidal if
    \begin{equation*}
        \sigma(z) \to \begin{cases}
            1 \quad &\mathrm{as} \; z \to +\infty, \\
            0 \quad &\mathrm{as} \; z \to -\infty. \\
        \end{cases}
    \end{equation*}
\end{defn}

The approximation property of STC is given in the following theorem.
\begin{thm} \label{thm:method:1}
    Let $\sigma$ be any continuous sigmoidal function. Then finite sums of
    \begin{equation}
        G_C(\boldsymbol{x}, t) = \sum_{j=1}^N u_j \sigma((\boldsymbol{k}_{j}^{x})^\top \boldsymbol{x} + k_{j}^{t} t + b_{j}) \label{eqn:method:fsum}
    \end{equation}
    are dense in $C(\Omega \times [0, T])$. In other words, given any $f \in C(\Omega \times [0, T])$ and $\epsilon > 0$, there exists a sum $G_C(x, t)$, such that
    \begin{equation*}
        |G_C(\boldsymbol{x}, t) - f(\boldsymbol{x}, t)| < \epsilon \quad \mathrm{for}\; \mathrm{all} \; \boldsymbol{x}, t \in \Omega \times [0, T].
    \end{equation*}
\end{thm}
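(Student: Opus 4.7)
The plan is to reduce the statement directly to the classical Cybenko universal approximation theorem by treating the space-time domain as a single Euclidean domain. Concretely, I would introduce the concatenated variable $\boldsymbol{y} = (\boldsymbol{x}, t) \in \mathbb{R}^{d_x + 1}$ and the concatenated weight $\boldsymbol{w}_j = (\boldsymbol{k}_j^x, k_j^t) \in \mathbb{R}^{d_x + 1}$, so that
\begin{equation*}
    G_C(\boldsymbol{x}, t) \;=\; \sum_{j=1}^N u_j\, \sigma\bigl(\boldsymbol{w}_j^\top \boldsymbol{y} + b_j\bigr).
\end{equation*}
Under this relabeling, $G_C$ is precisely a finite sum of the form considered in Cybenko's density theorem, but on inputs from a subset of $\mathbb{R}^{d_x + 1}$ instead of $\mathbb{R}^{d_x}$.

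Next, I would check the hypotheses of Cybenko's theorem for the compact set $K := \overline{\Omega} \times [0, T] \subset \mathbb{R}^{d_x + 1}$. Since $\Omega$ is assumed to be a bounded spatial domain (as used throughout the paper), $K$ is compact, and $\sigma$ is continuous sigmoidal by assumption. Cybenko's theorem, stated for the unit cube, extends verbatim to any compact subset of $\mathbb{R}^n$: finite sums $\sum_j u_j \sigma(\boldsymbol{w}_j^\top \boldsymbol{y} + b_j)$ are dense in $C(K)$ with respect to the uniform norm. Applying this with $n = d_x + 1$ gives, for every $f \in C(\Omega \times [0, T])$ and every $\epsilon > 0$, a finite sum $G_C$ with $\sup_{K} |G_C - f| < \epsilon$, which is exactly the conclusion.

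Since this is essentially a direct invocation of an existing theorem with a change of variables, there is no serious obstacle. The only mild subtlety is ensuring that one is entitled to extend Cybenko's statement from $[0,1]^n$ to an arbitrary compact $K \subset \mathbb{R}^n$; I would justify this either by noting that Cybenko's original proof (Hahn–Banach plus the characterization of discriminatory functions) applies unchanged to any compact set, or by embedding $K$ inside a large cube $[-R, R]^{d_x + 1}$ and using an affine reparameterization, absorbing the resulting rescaling into the weights $\boldsymbol{w}_j$ and biases $b_j$. With that observation in place, the proof reduces to one or two lines of invocation.
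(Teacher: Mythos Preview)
Your proposal is correct and follows exactly the paper's approach: the paper's proof is the single line ``It is a direct consequence of \cite[Theorem~2]{cybenko1989approximation},'' which is precisely your reduction via the concatenated variable $\boldsymbol{y}=(\boldsymbol{x},t)$. Your additional remarks on compactness and on extending Cybenko's statement from the unit cube to an arbitrary compact set are more careful than what the paper actually writes, but they are in the same spirit and fill in details the paper leaves implicit.
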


\begin{proof}
    It is a direct consequence of  in~\cite[Theorem~2]{cybenko1989approximation}.
\end{proof}

\begin{coro} \label{coro:method:1}
    When using tanh as the activation function in~\eqref{eqn:method:fsum}, the finite sums~\eqref{eqn:method:fsum} are dense in $C(\Omega \times [0, T])$.
\end{coro}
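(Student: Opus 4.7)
The plan is to reduce the corollary to Theorem~\ref{thm:method:1} by an affine change of activation. Note that $\tanh$ itself is not sigmoidal in the sense of the paper's definition, since $\tanh(z)\to -1$ (rather than $0$) as $z\to -\infty$, so the corollary is not an immediate specialization of Theorem~\ref{thm:method:1}. I would fix this by introducing
\[
    \tilde{\sigma}(z) = \tfrac{1}{2}\bigl(\tanh(z) + 1\bigr),
\]
which is continuous, with $\tilde{\sigma}(z)\to 1$ as $z\to +\infty$ and $\tilde{\sigma}(z)\to 0$ as $z\to -\infty$. Hence $\tilde{\sigma}$ is a continuous sigmoidal function in the sense of the definition above Theorem~\ref{thm:method:1}, and Theorem~\ref{thm:method:1} applies to $\tilde{\sigma}$: given $f\in C(\Omega\times[0,T])$ and $\epsilon>0$, there exist $N$, coefficients $\tilde{u}_j$, and parameters $(\boldsymbol{k}_j^x, k_j^t, b_j)$ such that
\[
    \tilde{G}(\boldsymbol{x},t) = \sum_{j=1}^N \tilde{u}_j\, \tilde{\sigma}\bigl((\boldsymbol{k}_j^x)^\top \boldsymbol{x} + k_j^t\, t + b_j\bigr)
\]
satisfies $|\tilde{G}(\boldsymbol{x},t) - f(\boldsymbol{x},t)| < \epsilon$ uniformly on $\Omega\times[0,T]$.

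Substituting $\tilde{\sigma}(z) = \tfrac{1}{2}(\tanh(z)+1)$ back in gives
\[
    \tilde{G}(\boldsymbol{x},t) = \sum_{j=1}^N \frac{\tilde{u}_j}{2}\, \tanh\bigl((\boldsymbol{k}_j^x)^\top \boldsymbol{x} + k_j^t\, t + b_j\bigr) + C,
\]
with $C = \tfrac{1}{2}\sum_{j=1}^N \tilde{u}_j$. The one small obstacle is the leftover constant $C$: it must be re-expressed as another term of the form $u\,\tanh((\boldsymbol{k}^x)^\top\boldsymbol{x} + k^t t + b)$. I would handle this by picking any $M>0$ and choosing $\boldsymbol{k}^x = \boldsymbol{0}$, $k^t = 0$, $b = M$, $u_{N+1} = C/\tanh(M)$, so that $u_{N+1}\tanh(M) = C$ exactly. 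Appending this $(N+1)$-th term produces a function $G_C$ of the form \eqref{eqn:method:fsum} that is pointwise equal to $\tilde{G}$. The uniform bound $|G_C(\boldsymbol{x},t) - f(\boldsymbol{x},t)| < \epsilon$ on $\Omega\times[0,T]$ then follows immediately, establishing density.

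The argument is essentially bookkeeping on top of Cybenko's theorem; the only place that requires any thought is representing the additive constant $C$ inside the class \eqref{eqn:method:fsum}, and the observation $\tanh(M)\neq 0$ for $M\neq 0$ makes this exact rather than merely approximate.
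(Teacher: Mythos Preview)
Your proposal is correct and follows essentially the same route as the paper: both arguments introduce an affinely shifted sigmoidal version of $\tanh$, invoke Theorem~\ref{thm:method:1}, and then absorb the leftover additive constant back into the sum. The one noteworthy difference is in how that constant is handled: the paper splits into the cases $\hat u=0$ and $\hat u\neq 0$ and, in the latter, uses compactness of $\Omega\times[0,T]$ to choose parameters making $\tanh$ uniformly within $\epsilon/|\hat u|$ of $1$, incurring an $\epsilon/2+\epsilon/2$ estimate; your trick of taking $\boldsymbol{k}^x=\boldsymbol{0}$, $k^t=0$, $b=M\neq 0$ and $u_{N+1}=C/\tanh(M)$ reproduces the constant \emph{exactly}, which is cleaner and avoids both the case split and the extra approximation.
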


\begin{proof}
    Consider a continuous sigmoidal function $\tilde{\sigma}(z) = \frac{1}{1 + e^{-z}}$, then $\mathrm{tanh}(z) = 2 \tilde{\sigma}(2 z) - 1$. From Theorem~\ref{thm:method:1}, there is a sum $\tilde{G}_J(\boldsymbol{x}, t) = \sum_{j=1}^{\tilde{N}} \tilde{u}_j \tilde{\sigma}((\boldsymbol{\tilde{k}}_{j}^{x})^\top \boldsymbol{x} + \tilde{k}_{j}^{t} t + \tilde{b_j})$, such that
    \begin{equation*}
        |\tilde{G}_J(\boldsymbol{x}, t) - f(\boldsymbol{x}, t)| < \frac{\epsilon}{2}.
    \end{equation*}
    Define $\hat{u} = \frac{1}{2} \sum_{j=1}^{\tilde{N}} \tilde{u}_j$. If $\hat{u} = 0$, then the finite sum $ G_C(\boldsymbol{x}, t) = \sum_{j=1}^{\tilde{N}} \frac{1}{2} \tilde{u}_j \mathrm{tanh}(\frac{1}{2} ((\boldsymbol{\tilde{k}}_{j}^{x})^\top \boldsymbol{x} + \tilde{k}_{j}^{t} t + \tilde{b_j})) $ satisfies
    \begin{align*}
        |G_C(\boldsymbol{x}, t) - f(\boldsymbol{x}, t)| &= |\sum_{j=1}^{\tilde{N}} \frac{1}{2} \tilde{u}_j (2 \tilde{\sigma}((\boldsymbol{\tilde{k}}_{j}^{x})^\top \boldsymbol{x} + \tilde{k}_{j}^{t} t + \tilde{b_j}) - 1) - f(\boldsymbol{x}, t)| = |\tilde{G}_J(\boldsymbol{x}, t) - f(x, t)| < \frac{\epsilon}{2} < \epsilon.
    \end{align*}

    If $\hat{u} \neq 0$, since $\Omega \subset \mathbb{R}^{d_x}$ is compact and thus is bounded and closed, there exist $\boldsymbol{\hat{k}}^{x}$, $\hat{k}^{t}$ and $\hat{b}$ such that
    \begin{equation*}
        |\mathrm{tanh}((\boldsymbol{\hat{k}}^{x})^\top \boldsymbol{x} + \hat{k}_{t} t + \hat{b}) - 1| < \frac{\epsilon}{|\hat{u}|}
    \end{equation*}
    holds for all $(\boldsymbol{x}, t) \in \Omega \times [0, T]$.
    Then, the finite sum
    $ G_C(\boldsymbol{x}, t) = \sum_{j=1}^{\tilde{N}} \frac{1}{2} \tilde{u}_j \mathrm{tanh}(\frac{1}{2}((\boldsymbol{\tilde{k}}_{j}^{x})^\top \boldsymbol{x} + \tilde{k}_{j}^{t} t + \tilde{b_j})) + \frac{1}{2} \hat{u} \mathrm{tanh}((\boldsymbol{\hat{k}}^{x})^\top \boldsymbol{x} + \hat{k}^{t} t + \hat{b}) $ satisfies
    \begin{align*}
        |G_C(\boldsymbol{x}, t) - f(\boldsymbol{x}, t)| &= | \sum_{j=1}^{\tilde{N}} \frac{1}{2} \tilde{u}_j (2 \tilde{\sigma}((\boldsymbol{\tilde{k}}_{j}^{x})^\top \boldsymbol{x} + \tilde{k}_{j}^{t} t + \tilde{b_j}) - 1) + \frac{1}{2} \hat{u} \mathrm{tanh}((\boldsymbol{\hat{k}}^{x})^\top \boldsymbol{x} + \hat{k}^{t} t + \hat{b}) - f(\boldsymbol{x}, t) | \\
        & \le | \tilde{G}_J(\boldsymbol{x}, t) - f(\boldsymbol{x}, t) | + \frac{1}{2} |\hat{u}| |\mathrm{tanh}((\boldsymbol{\hat{k}}^{x})^\top \boldsymbol{x} + \hat{k}^{t} t + \hat{b}) - 1| \\
        & < \frac{\epsilon}{2} + \frac{\epsilon}{2} = \epsilon
    \end{align*}
    for all $(\boldsymbol{x}, t) \in \Omega \times [0, T]$.
\end{proof}

For SoV, we first extend the definition of discriminatory functions in~\cite{cybenko1989approximation} as follows.
\begin{defn}
    $\sigma$ is said to be discriminatory if for a measure $\mu \in \mathcal{M}(\Omega \times [0, T])$
    \begin{equation*}
        \int_{\Omega \times [0, T]} \sigma((\boldsymbol{k}^{x})^\top \boldsymbol{x} + b^{x}) \sigma(k^{t} t + b^{t}) \mathrm{d} \mu(\boldsymbol{x}, t)
    \end{equation*}
    for all $\boldsymbol{k}^{x} \in \mathbb{R}^{d_x}$, $k^{t} \in \mathbb{R}$,  $b^{x} \in \mathbb{R}$ and $b^{t} \in \mathbb{R}$ implies $\mu = 0$.
\end{defn}

\begin{lem} \label{lem:method:1}
    Any bounded, measurable sigmoidal function $\sigma$ is discriminatory. In particular, any continuous sigmoidal function is discriminatory.
\end{lem}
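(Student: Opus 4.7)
The plan is to reduce the statement to two successive applications of Cybenko's original lemma (Lemma~1 of~\cite{cybenko1989approximation}), which asserts that any bounded measurable sigmoidal function on $\mathbb{R}^{n}$ is discriminatory in the single-factor sense. The product structure $\sigma(\cdot)\sigma(\cdot)$ appearing in our extended definition is exactly what makes this reduction possible: by ``peeling off'' one factor at a time, via suitable auxiliary signed measures obtained through Fubini's theorem, we can chain two one-variable discrimination results together.

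First I would fix $k^{t}\in\mathbb{R}$ and $b^{t}\in\mathbb{R}$ and define a signed Borel measure $\nu_{k^{t},b^{t}}$ on $\Omega$ by
\begin{equation*}
    \nu_{k^{t},b^{t}}(A) \;=\; \int_{A\times [0,T]} \sigma(k^{t} t + b^{t})\,\mathrm{d}\mu(\boldsymbol{x},t),\qquad A\subseteq\Omega\text{ Borel}.
\end{equation*}
Boundedness and measurability of $\sigma$, together with the finiteness of the total variation $\|\mu\|_{\mathrm{TV}}$, ensure that $\nu_{k^{t},b^{t}}$ is a finite signed measure on $\Omega$. By Fubini's theorem, the hypothesis of the lemma becomes $\int_{\Omega}\sigma((\boldsymbol{k}^{x})^{\top}\boldsymbol{x}+b^{x})\,\mathrm{d}\nu_{k^{t},b^{t}}(\boldsymbol{x})=0$ for every $\boldsymbol{k}^{x}\in\mathbb{R}^{d_{x}}$ and $b^{x}\in\mathbb{R}$. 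Cybenko's univariate lemma applied on $\Omega\subseteq\mathbb{R}^{d_{x}}$ then forces $\nu_{k^{t},b^{t}}\equiv 0$, and because $k^{t},b^{t}$ were arbitrary this gives $\int_{[0,T]}\sigma(k^{t} t+b^{t})\,\mathrm{d}\eta_{A}(t)=0$ for every Borel $A\subseteq\Omega$ and every $k^{t},b^{t}\in\mathbb{R}$, where $\eta_{A}(B):=\mu(A\times B)$ is the section measure on $[0,T]$.

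A second application of Cybenko's lemma, this time in the one-dimensional variable $t$, yields $\eta_{A}\equiv 0$ for every Borel $A\subseteq\Omega$. Therefore $\mu$ vanishes on all measurable rectangles $A\times B$, and a routine Dynkin/monotone-class argument extends this vanishing to the full Borel $\sigma$-algebra of $\Omega\times[0,T]$, giving $\mu=0$. The ``in particular'' assertion for continuous sigmoidal $\sigma$ is then immediate, since such $\sigma$ is automatically bounded and measurable. The only genuinely delicate point I expect is verifying that the two auxiliary measures $\nu_{k^{t},b^{t}}$ and $\eta_{A}$ are legitimate signed finite Borel measures, so that Cybenko's lemma truly applies to them; this amounts to a careful bookkeeping of $\sigma$-additivity and total variation, but requires no new ideas beyond dominated convergence.
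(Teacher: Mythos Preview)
Your argument is correct and genuinely different from the paper's. The paper does not reduce to Cybenko's one-factor lemma as a black box; instead it re-runs Cybenko's original mechanism directly in the product setting: it scales both factors by a parameter $\lambda$, passes to the pointwise limit via bounded convergence to obtain that $\mu$ vanishes on all ``quarter-spaces'' $\{(\boldsymbol{k}^{x})^{\top}\boldsymbol{x}+b>0,\;k^{t}t+b>0\}$ (together with the boundary pieces), then introduces a functional $F(h)=\int h((\boldsymbol{k}^{x})^{\top}\boldsymbol{x})\,h(k^{t}t)\,\mathrm{d}\mu$, argues it annihilates indicators and hence simple functions, and finally plugs in $\cos$ and $\sin$ to kill the Fourier transform of $\mu$.

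Your reduction via the auxiliary measures $\nu_{k^{t},b^{t}}$ and the section measures $\eta_{A}$ is cleaner and more modular: two invocations of Cybenko's lemma plus a $\pi$--$\lambda$ argument, with no need to revisit the scaling or Fourier steps. It also sidesteps a sloppy point in the paper's write-up, where the map $h\mapsto F(h)$ is called a ``linear functional'' even though $h$ enters quadratically. The paper's route, by contrast, is self-contained and shows the reader exactly why the product structure does not obstruct Cybenko's mechanism, at the cost of length. Both approaches hinge on the same measure-theoretic hygiene you flag: that the relevant auxiliary objects are finite signed Borel measures, which follows from boundedness of $\sigma$ and $\|\mu\|_{\mathrm{TV}}<\infty$.
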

\begin{proof}
    For any $\boldsymbol{x}, t, \boldsymbol{k}=((\boldsymbol{k}^{x})^\top, k_t)^\top, b, \theta=(\theta^{x}, \theta^{t})$, we have
    \begin{equation*}
        \sigma(\lambda((\boldsymbol{k}^{x})^\top \boldsymbol{x} + b) + \theta_x) \begin{cases}
            \to 1 & \mathrm{for} \quad (\boldsymbol{k}^{x})^\top \boldsymbol{x} + b > 0 \quad \mathrm{as} \quad \lambda \to +\infty, \\
            \to 0 & \mathrm{for} \quad (\boldsymbol{k}^{x})^\top \boldsymbol{x} + b < 0 \quad \mathrm{as} \quad \lambda \to +\infty, \\
            = \sigma(\theta_x) & \mathrm{for} \quad (\boldsymbol{k}^{x})^\top \boldsymbol{x} + b = 0 \quad \mathrm{for}\; \mathrm{all} \; \lambda.
        \end{cases}
    \end{equation*}
    Thus, $\sigma_{x, \lambda}(\boldsymbol{x}) = \sigma(\lambda((\boldsymbol{k}^{x})^\top \boldsymbol{x} + b) + \theta_x)$ converges in the pointwise and bounded sense to 
    \begin{equation*}
        \gamma_x(\boldsymbol{x}) = \begin{cases}
            1 & \mathrm{for} \quad (\boldsymbol{k}^{x})^\top \boldsymbol{x} + b > 0, \\
            0 & \mathrm{for} \quad (\boldsymbol{k}^{x})^\top \boldsymbol{x} + b < 0, \\
            \sigma(\theta_x) & \mathrm{for} \quad (\boldsymbol{k}^{x})^\top \boldsymbol{x} + b = 0
        \end{cases}
    \end{equation*}
    as $\lambda \to +\infty$. Similarly, we have $\sigma_{t, \lambda}(t) = \sigma(\lambda(y_t t + \theta) + \phi_t)$ converges in the pointwise and bounded sense to
    \begin{equation*}
        \gamma_t(t) = \begin{cases}
            1 & \mathrm{for} \quad k^{t} t + b > 0, \\
            0 & \mathrm{for} \quad k^{t} t + b < 0, \\
            \sigma(\theta_t) & \mathrm{for} \quad k^{t} t + b = 0
        \end{cases}
    \end{equation*}
    as $\lambda \to +\infty$.

    Denote $\Pi_{k, b}^{x} = \{ (\boldsymbol{x}, t) | (\boldsymbol{k}^{x})^\top \boldsymbol{x} + b = 0, k^{t} t + b > 0 \}$ and $\Pi_{k, b}^{t} = \{ (\boldsymbol{x}, t) | (\boldsymbol{k}^{x})^\top \boldsymbol{x} + b > 0, k^{t} t + b = 0 \}$. Let $I_{k, b}$ be the hyperline defined by $\{ (\boldsymbol{x}, t) | (\boldsymbol{k}^{x})^\top \boldsymbol{x} + b = 0, k^{t} t + b = 0 \}$ and $H_{k, b}$ be the space defined by $\{ (\boldsymbol{x}, t) | (\boldsymbol{k}^{x})^\top \boldsymbol{x} + b > 0, k^{t} t + b > 0 \}$. Then by the Lesbegue Bounded Convergence Theorem, we have
    \begin{align*}
        0 &= \int_{\Omega \times [0, T]} \sigma_{x, \lambda}(\boldsymbol{x}) \sigma_{t, \lambda}(t) \mathrm{d} \mu(\boldsymbol{x}, t) 
        = \int_{\Omega \times [0, T]} \gamma_x(\boldsymbol{x}) \gamma_t(t) \mathrm{d} \mu(\boldsymbol{x}, t) \\
        &= \mu(H_{k, b}) + \sigma(\theta_x) \mu(\Pi_{k, b}^{x}) + \sigma(\theta_t) \mu(\Pi_{k, b}^{t}) + \sigma(\theta_x) \sigma(\theta_t) \mu(I_{k, b})
    \end{align*}
    for all $\theta, b, \boldsymbol{k}$.

    We now show that the measure of all quarter spaces being zero implies that the measure $\mu$ itself must be zero. This would be trivial if $\mu$ was a positive measure but here it is not.

    For a fixed $\boldsymbol{k}$ and a bounded measurable function $h$, we define a linear functional $F$ as
    \begin{equation*}
        F(h) = \int_{\Omega \times [0, T]} h((\boldsymbol{k}^{x})^\top \boldsymbol{x}) h(k^{t} t) \mathrm{d} \mu(\boldsymbol{x}, t).
    \end{equation*}
    Note that $F$ is a bounded functional on $L^{\infty}(\mathbb{R})$ since $\mu$ is a finite signed measure. Set $h$ the indicator function of the interval $[\theta, \infty)$, i.e. $h(u) = 1$ if $u \ge 0$ and $h(u) = 0$ if $u < \theta$, then
    \begin{equation*}
        F(h) = \int_{\Omega \times [0, T]} h((\boldsymbol{k}^{x})^\top \boldsymbol{x}) h(k^{t} t) \mathrm{d} \mu(\boldsymbol{x}, t) = \mu(I_{k, -b}) + \mu(\Pi_{k, -b}^{x}) + \mu(\Pi_{k, -b}^{t}) + \mu(H_{k, -b}) = 0.
    \end{equation*}
    Similarly, $f(h) = 0$ if $h$ is the indicator function of the open interval $(\theta, \infty)$. By linearity, $F(h) = 0$ holds for the indicator function of any interval and for any simple function (sum of indicator functions of intervals). Since simple functions are dense in $L^{\infty}(\mathbb{R})$, $F = 0$.

    In particular, a substitution of bounded measurable functions $s(\boldsymbol{x}, t) = \sin (\boldsymbol{m}_x^\top \boldsymbol{x} + m_t t)$ and $c(\boldsymbol{x}, t) = \cos (\boldsymbol{m}_x^\top \boldsymbol{x} + m_t t)$ gives 
    \begin{equation*}
        F(c + is) = \int_{\Omega \times [0, T]} \cos (\boldsymbol{m}_x^\top \boldsymbol{x} + m_t t) + i \sin (\boldsymbol{m}_x^\top \boldsymbol{x} + m_t t) \mathrm{d} \mu(x, t) = \int_{\Omega \times [0, T]} \exp(i (\boldsymbol{m}_x^\top \boldsymbol{x} + m_t t)) \mathrm{d} \mu(\boldsymbol{m}, t) = 0
    \end{equation*}
    for all $\boldsymbol{m} = (\boldsymbol{m}_x, m_t)$. The Fourier transform of $\mu$ is zero and we have $\mu=0$. Hence, $\sigma$ is discriminatory.
\end{proof}

\begin{rmk}
    Following the same routine, we can prove the approximation property of SoV. For any continuous sigmoidal function $\sigma$, the finite sums
	\begin{equation}
		G_S(\boldsymbol{x}, t) = \sum_{j=1}^N u_j \sigma((\boldsymbol{k}_{j}^{x})^\top \boldsymbol{x} + b_{j}^{x}) \sigma(k^{t} t + b_{j}^{t}) \label{eqn:method:sep-sig}
	\end{equation}
	are dense in $C(\Omega \times [0, T])$.
\end{rmk}

\begin{coro} \label{coro:method:2}
For the tanh activation function in~\eqref{eqn:method:sep-sig}, the finite sums of the form
    \begin{equation}
        G_S(\boldsymbol{x}, t) = \sum_{j=1}^N u_j \mathrm{tanh}((\boldsymbol{k}_{j}^{x})^\top \boldsymbol{x} + b_{j}^{x}) \mathrm{tanh}(k_{j}^{t} t + b_{j}^{t}) \label{eqn:method:sep-tanh}
    \end{equation}
    are dense in $C(\Omega \times [0, T])$.
\end{coro}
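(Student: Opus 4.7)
The approach parallels Corollary~\ref{coro:method:1}: I first apply the Remark (density of $\sigma\cdot\sigma$ products for any continuous sigmoidal $\sigma$) to the specific logistic sigmoid $\tilde{\sigma}(z) = 1/(1+e^{-z})$, and then pass from $\tilde{\sigma}$ to $\tanh$ via the identity $\tilde{\sigma}(z) = \tfrac{1}{2}(1 + \tanh(z/2))$. The new wrinkle, relative to Corollary~\ref{coro:method:1}, is that expanding a product $\tilde{\sigma}(A)\tilde{\sigma}(B)$ produces not only a $\tanh\cdot\tanh$ cross term but also two single-$\tanh$ terms and a constant, none of which is of the required product form~\eqref{eqn:method:sep-tanh}.

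Concretely, given $f \in C(\Omega\times[0,T])$ and $\epsilon > 0$, the Remark supplies a finite sum
\[
\tilde{G}_J(\boldsymbol{x}, t) = \sum_{j=1}^{\tilde{N}} \tilde{u}_j\, \tilde{\sigma}(A_j)\, \tilde{\sigma}(B_j),\qquad A_j = (\tilde{\boldsymbol{k}}_j^x)^\top \boldsymbol{x} + \tilde{b}_j^x,\quad B_j = \tilde{k}_j^t\, t + \tilde{b}_j^t,
\]
with $|\tilde{G}_J - f| < \epsilon/2$ on $\Omega \times [0,T]$. Substituting the logistic-to-tanh identity, each summand expands as
\[
\tilde{u}_j\, \tilde{\sigma}(A_j)\tilde{\sigma}(B_j) = \tfrac{\tilde{u}_j}{4}\bigl[\tanh(A_j/2)\tanh(B_j/2) + \tanh(A_j/2) + \tanh(B_j/2) + 1\bigr].
\]
The first piece already has the form~\eqref{eqn:method:sep-tanh} with spatial parameters $(\tilde{\boldsymbol{k}}_j^x/2,\ \tilde{b}_j^x/2)$ and temporal parameters $(\tilde{k}_j^t/2,\ \tilde{b}_j^t/2)$; the remaining three pieces are the obstacle.

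To absorb them into the same product form, I exploit $\tanh(L)\to 1$ as $L\to+\infty$: I replace $\tanh(A_j/2)$ by $\tanh(A_j/2)\,\tanh(0\cdot t + L)$ (a legal~\eqref{eqn:method:sep-tanh} atom with temporal weight zero and bias $L$), replace $\tanh(B_j/2)$ by $\tanh(\boldsymbol{0}^\top \boldsymbol{x} + L)\,\tanh(B_j/2)$, and replace $1$ by $\tanh(\boldsymbol{0}^\top \boldsymbol{x} + L)\tanh(0\cdot t + L) = \tanh(L)^2$. Denoting the resulting sum by $G_S$, which is of the form~\eqref{eqn:method:sep-tanh}, and using $|\tanh(\cdot)|\le 1$,
\[
|G_S - \tilde{G}_J| \le \biggl(\sum_{j=1}^{\tilde{N}} \tfrac{|\tilde{u}_j|}{2}\biggr)|\tanh(L) - 1| + \biggl(\sum_{j=1}^{\tilde{N}} \tfrac{|\tilde{u}_j|}{4}\biggr)|\tanh(L)^2 - 1|,
\]
which is driven below $\epsilon/2$ by choosing $L$ sufficiently large. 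The triangle inequality then gives $|G_S - f| < \epsilon$ pointwise on $\Omega\times[0,T]$. The main obstacle is precisely this bookkeeping step, namely re-expressing the stray constant and single-$\tanh$ factors as legitimate $\tanh\cdot\tanh$ products by forcing one factor to have zero input weight and a large bias; no analytical ingredient beyond the Remark and the logistic-to-tanh identity is needed.
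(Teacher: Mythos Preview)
Your proposal is correct and follows precisely the route the paper has in mind: it says only that the proof is ``similar to that of Corollary~\ref{coro:method:1}'', and your argument is exactly that---apply the separation-of-variables density result (the Remark) to the logistic sigmoid, expand via $\tilde{\sigma}(z)=\tfrac{1}{2}(1+\tanh(z/2))$, and absorb the stray constant and single-$\tanh$ pieces by multiplying by a factor $\tanh(L)$ with zero input weight and large bias. The extra bookkeeping you flag (three residual terms rather than one) is the only difference from Corollary~\ref{coro:method:1}, and you handle it correctly.
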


\begin{proof}
    The proof is similar to that of Corollary~\ref{coro:method:1}.
\end{proof}

\subsection{Error Estimates of Space-time Random Feature Method} \label{sec:method:theory2}

For convenience, we set $N_b = N_t$ to analyze the error of block time-marching strategy and the ST-RFM. Let $\boldsymbol{u}^{S}$ and $\boldsymbol{u}^{B}$ be exact solutions in Eq.~\eqref{eqn:method:strfm} and the block time-marching strategy in Algorithm~\ref{alg:method:timeblock}, and $\hat{\boldsymbol{u}}^{S}$ and $\hat{\boldsymbol{u}}^{B}$ be numerical solutions by a numerical solver, such as direct or iterative methods, respectively. Then we have the following results.
\begin{thm} \label{thm:method:2}
    Assume that for any linear least-squares problem $\min_{\boldsymbol{u}} \| \mathbf{A} \boldsymbol{u} - \boldsymbol{b} \|^2$ in ST-RFM and block time-marching strategy, there exists $\boldsymbol{u}^*$ such that $\mathbf{A} \boldsymbol{u}^* = \boldsymbol{b}$. Then $\boldsymbol{u}^{B}=\boldsymbol{u}^{S}$.
\end{thm}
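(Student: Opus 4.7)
The plan is to show that, under the consistency assumption, the concatenated block time-marching iterate $\boldsymbol{u}^{B} = [(\boldsymbol{u}_{1}^{B})^\top,\ldots,(\boldsymbol{u}_{N_b}^{B})^\top]^\top$ satisfies exactly the same linear system $\mathbf{A}\boldsymbol{u} = \boldsymbol{b}$ as the ST-RFM problem, and conversely that any ST-RFM exact solution satisfies, block-by-block, the per-block equations solved by Algorithm~\ref{alg:method:timeblock}. Both vectors therefore lie in the same affine solution set, and in particular coincide under the implicit convention that each exact solution is uniquely selected.

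First I would unpack the row structure of $\mathbf{A}\boldsymbol{u} = \boldsymbol{b}$ constructed just before Eq.~\eqref{eqn:method:strfm}. The rows split into three families: (i) the initial-condition rows $\mathbf{\Phi}_{1,0}\boldsymbol{u}_1 = \boldsymbol{h}$, coming from the top of $\mathbf{A}_1$ and $\boldsymbol{b}_1$; (ii) for each $i=1,\ldots,N_t$, the PDE-residual rows $\mathbf{L}_i \boldsymbol{u}_i = \boldsymbol{f}_i$; and (iii) for each $i=1,\ldots,N_t-1$, the continuity rows $\mathbf{\Phi}_{i,1}\boldsymbol{u}_i = \mathbf{\Phi}_{i+1,0}\boldsymbol{u}_{i+1}$, formed from the $-\mathbf{\Phi}_{i,1}^\top$ entry in $\mathbf{A}_i$ and the $\mathbf{\Phi}_{i+1,0}^\top$ entry in $\mathbf{A}_{i+1}$ (with zero right-hand side since the corresponding slots of every $\boldsymbol{b}_j$ vanish). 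The assumption that $\mathbf{A}\boldsymbol{u}^* = \boldsymbol{b}$ is solvable ensures that a zero residual is achievable, so any ST-RFM minimizer $\boldsymbol{u}^{S}$ must satisfy all three families.

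Second, I would verify inductively on $i$ that the block time-marching produces $\boldsymbol{u}_i^{B}$ fulfilling the same three families. For $i=1$, the subproblem $\tilde{\mathbf{A}}_1 \boldsymbol{u} = \tilde{\boldsymbol{b}}_1$ yields $\mathbf{\Phi}_{1,0}\boldsymbol{u}_1^{B} = \boldsymbol{h}$ and $\mathbf{L}_1 \boldsymbol{u}_1^{B} = \boldsymbol{f}_1$ by the consistency hypothesis applied to this subproblem. For $i \ge 2$, the first block of rows of $\tilde{\mathbf{A}}_i \boldsymbol{u} = \tilde{\boldsymbol{b}}_i$ reads $\mathbf{\Phi}_{i,0}\boldsymbol{u}_i^{B} = \mathbf{\Phi}_{i-1,1}\boldsymbol{u}_{i-1}^{B}$, i.e.\ precisely the continuity row (iii) at index $i-1$, while the remaining rows reproduce $\mathbf{L}_i \boldsymbol{u}_i^{B} = \boldsymbol{f}_i$; both hold with equality since $\tilde{\mathbf{A}}_i \boldsymbol{u} = \tilde{\boldsymbol{b}}_i$ is assumed exactly solvable. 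Concatenating these equalities shows $\mathbf{A}\boldsymbol{u}^{B} = \boldsymbol{b}$, so $\boldsymbol{u}^{B}$ is itself an exact minimizer of the ST-RFM least-squares problem.

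Conversely, reading off the three families of equations for $\boldsymbol{u}^{S}$ immediately gives $\tilde{\mathbf{A}}_i \boldsymbol{u}_i^{S} = \tilde{\boldsymbol{b}}_i^{(S)}$ with $\tilde{\boldsymbol{b}}_i^{(S)} = [(\boldsymbol{u}_{i-1}^{S})^\top \mathbf{\Phi}_{i-1,1}^\top, \boldsymbol{f}_i^\top]^\top$ for $i \ge 2$ (and analogously for $i=1$), which is the very subproblem solved in step $i$ of Algorithm~\ref{alg:method:timeblock} once the previous iterate is chosen equal to $\boldsymbol{u}_{i-1}^{S}$. Hence $\boldsymbol{u}^{S}$ and $\boldsymbol{u}^{B}$ satisfy the same system of equations on each block and thus coincide.

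The main obstacle will be the careful book-keeping of the index shifts between the global row-packing of $\mathbf{A}$ (in particular the $(i-1)(Q_t+1)Q_x$-sized zero paddings in the definition of $\mathbf{A}_i$ and $\boldsymbol{b}_i$) and the local subsystems $\tilde{\mathbf{A}}_i,\tilde{\boldsymbol{b}}_i$ of Algorithm~\ref{alg:method:timeblock}; pinning down that the single continuity row $-\mathbf{\Phi}_{i,1}\boldsymbol{u}_i + \mathbf{\Phi}_{i+1,0}\boldsymbol{u}_{i+1}=\mathbf{0}$ in the global system translates, under sequential solution, to the re-initialization $\tilde{\boldsymbol{b}}_i = [(\boldsymbol{u}_{i-1}^{B})^\top \mathbf{\Phi}_{i-1,1}^\top,\boldsymbol{f}_i^\top]^\top$ in the algorithm is the only nontrivial check. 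Once this row-by-row correspondence is in place, the rest is a direct consequence of the consistency hypothesis.
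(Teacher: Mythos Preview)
Your proposal is correct and follows essentially the same route as the paper: both arguments verify, block by block, that the concatenated time-marching solution satisfies $\mathbf{A}\boldsymbol{u}^{B}=\boldsymbol{b}$ exactly (the paper does this via a direct algebraic expansion of $\mathbf{A}_i\boldsymbol{u}_i^{B}$ and a telescoping sum, you do it by first identifying the three row families and then checking each), and then conclude $\boldsymbol{u}^{B}=\boldsymbol{u}^{S}$ from uniqueness of the exact least-squares minimizer. Your added converse paragraph is not needed once uniqueness is invoked, but it does no harm.
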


\begin{proof}
    Under the assumption, for each subproblem in Algorithm~\ref{alg:method:timeblock}, the optimal solution $\boldsymbol{u}_{i}^{B}$ satisifies
    \begin{equation*}
        \tilde{\mathbf{A}}_i \boldsymbol{u}_i^{B} = \tilde{\boldsymbol{b}}_i, \quad \mathrm{for} \; i=1, 2, \cdots, N_t.
    \end{equation*}
    Recalling the definition of $\mathbf{A}_i$ in ST-RFM, we have
    \begin{align*}
        \mathbf{A}_i \boldsymbol{u}_i^{B} =& [\boldsymbol{0}_{((i-1)(Q_t+1)Q_x)}^\top, \tilde{\boldsymbol{b}}_i^\top, - (\boldsymbol{u}_i^{B})^{\top} \mathbf{\Phi}_{i,1}^\top, \mathbf{0}_{(((N_t - i)(Q_t + 1) - 1)Q_x)}^\top ]^\top \\
        =& [\boldsymbol{0}_{((i-1)(Q_t+1)Q_x)}^\top, (\boldsymbol{u}_{i-1}^{B})^{\top} \mathbf{\Phi}_{i - 1,1}^\top, \boldsymbol{f}_i^\top, - (\boldsymbol{u}_i^{B})^{\top} \mathbf{\Phi}_{i,1}^\top, \mathbf{0}_{(((N_t - i)(Q_t + 1) - 1)Q_x)}^\top ]^\top \\
        =& [\boldsymbol{0}_{((i-1)(Q_t+1)Q_x)}^\top, (\boldsymbol{u}_{i-1}^{B})^{\top} \mathbf{\Phi}_{i - 1,1}^\top, \mathbf{0}_{(((N_t - i + 1)(Q_t + 1) - 1)Q_x)}^\top ]^\top + \\
        & [\boldsymbol{0}_{(((i-1)(Q_t+1) + 1)Q_x)}^\top, \boldsymbol{f}_{i}^{\top}, \mathbf{0}_{((N_t - i)(Q_t + 1)Q_x)}^\top ]^\top - \\
        & [\boldsymbol{0}_{(i(Q_t+1)Q_x)}^\top, (\boldsymbol{u}_{i}^{B})^{\top} \mathbf{\Phi}_{i,1}^\top, \mathbf{0}_{(((N_t - i)(Q_t + 1) - 1)Q_x)}^\top ]^\top.
    \end{align*}
    After some algebraic calculations, we have
    \begin{align*}
        \mathbf{A} \boldsymbol{u}^{B} = \sum_{i=1}^{N_t} \mathbf{A}_i \boldsymbol{u}_i^{B} = \boldsymbol{b}.
    \end{align*}
    Since the optimal solution of linear least-squares problem is unique under the assumption, we have $\boldsymbol{u}^{B} = \boldsymbol{u}^{S}$.
\end{proof}

From Theorem~\ref{thm:method:2}, solving the time-dependent PDEs by the block time-marching strategy is equivalent to solving the same problem by the ST-RFM. In practice, however, the numerical solution $\hat{\boldsymbol{u}}$ is different from the optimal solution $\boldsymbol{u}$, and we denote the difference by $\delta u$. For long time intervals, when $N_b$ and $N_t$ increases by one, the solution error at $N_b+1$ block or $N_t$ subdomain is expected to be greater than previous blocks or subdomains due to error accumulation. To quantitatively analyze the error propagation in terms of $N_t$ and $N_b$, we first introduce a matrix $\mathbf{B}_i$ as
\begin{equation*}
    \mathbf{B}_i = (\tilde{\mathbf{A}}_i^{\top} \tilde{\mathbf{A}}_i)^{-1} \mathbf{\Phi}_{i,0}^{\top} \mathbf{\Phi}_{i-1,1} \in \mathbb{R}^{J_n \times J_n}, \qquad \mathrm{for}\; i=1, \cdots, N_t.
\end{equation*}
For simplicity, random feature functions are set to be the same over different time subdomains, i.e. $\phi_{i_1, j} = \phi_{i_2, j}$ for all $1 \le i_1 \neq i_2 \le N_t$. Then, all $\mathbf{B}_i$ are the same and can be denoted by $\mathbf{B}$. To proceed, we need the following assumption for $\mathbf{B}$.
\begin{assum} \label{assum:method:2}
$\mathbf{B}$ is diagonalizable.
\end{assum}
\begin{rmk}
Although this assumption cannot be proved, we find that numerically $\mathbf{B}$ is diagonalizable for all the numerical examples we have tested. This is not a bad assumption since it provides a lower bound error estimate of the block time-marching strategy and is verified numerically as well.
\end{rmk}
Under Assumption~\ref{assum:method:2}, there exists $\{ (\lambda_1, \boldsymbol{b}_1), \cdots, (\lambda_{J_n}, \boldsymbol{b}_{J_n}) \}$ such that $\boldsymbol{b}_{k}$ is the eigenvector of $\mathbf{B}$ with eigenvalue $\lambda_k$ and $\{ \boldsymbol{b}_k \}$ is a unitary orthogonal basis in $\mathbb{R}^{J_n}$. Denote the eigenvalue with the largest modulus by $\lambda_m$, i.e. $\lambda_m = \max(|\lambda_k|)$. Since $\{\boldsymbol{b}_k\}$ forms a basis, there exists $\{ \delta u_{k} \}_{k=1}^{J_n}$ such that $\delta u = \sum_{k=1}^{J_n} \delta u_{k} \boldsymbol{b}_k$, where $\delta u_k$, $k=1, \cdots, J_n$ are independent and identically distributed random variables with $\mathbb{E}[\delta u_k] = 0$, $\mathbb{E}[|\delta u_k|] = \mu > 0$ and $\mathbb{E}[(\delta u_k)^2] = \delta^2 > 0$. 

We need the following lemma to bound a geometric series from above and from below.
\begin{lem} \label{lem:method:2}
    For any $\lambda \in \mathbb{C}$ with $| \lambda | \neq 1$ and $\epsilon \in (0, 1)$, there exists $N \in \mathbb{N}^+$ such that
    \begin{equation*}
        (1 + \epsilon) \frac{C_{\lambda}^n}{| 1 - \lambda |} \ge |\sum_{i=0}^{n-1} \lambda^i| \ge (1 - \epsilon) \frac{C_{\lambda}^n}{| 1 - \lambda |}
    \end{equation*}
    holds for $n > N$, where $C_{\lambda} = \max(1, |\lambda|)$.
\end{lem}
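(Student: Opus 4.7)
The plan is to reduce the estimate to the closed-form expression for a finite geometric sum and then analyze the two regimes $|\lambda|>1$ and $|\lambda|<1$ separately, exploiting the fact that in each regime exactly one of $|\lambda|^n$ or $1$ dominates inside the absolute value $|1-\lambda^n|$.

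First I would use
\begin{equation*}
    \sum_{i=0}^{n-1} \lambda^i = \frac{1 - \lambda^n}{1 - \lambda},
\end{equation*}
which is valid since $|\lambda|\ne 1$ implies $\lambda\ne 1$. Taking moduli gives
\begin{equation*}
    \left|\sum_{i=0}^{n-1} \lambda^i\right| = \frac{|1 - \lambda^n|}{|1 - \lambda|},
\end{equation*}
so the claim is equivalent to showing $(1-\epsilon)\,C_\lambda^n \le |1-\lambda^n| \le (1+\epsilon)\,C_\lambda^n$ for all $n$ sufficiently large.

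Next I would split into cases. If $|\lambda|>1$, then $C_\lambda = |\lambda|$, and the reverse and ordinary triangle inequalities give
\begin{equation*}
    |\lambda|^n - 1 \;\le\; |1-\lambda^n| \;\le\; |\lambda|^n + 1.
\end{equation*}
Dividing through by $C_\lambda^n=|\lambda|^n$ yields $1 - |\lambda|^{-n} \le |1-\lambda^n|/C_\lambda^n \le 1 + |\lambda|^{-n}$, and since $|\lambda|^{-n}\to 0$ one can choose $N_1$ so that $|\lambda|^{-n}<\epsilon$ for $n>N_1$, giving the two-sided bound. If instead $|\lambda|<1$, then $C_\lambda = 1$ and $|\lambda^n|=|\lambda|^n\to 0$, so the same triangle inequalities give $1-|\lambda|^n \le |1-\lambda^n| \le 1+|\lambda|^n$; choosing $N_2$ so that $|\lambda|^n<\epsilon$ for $n>N_2$ delivers the bound. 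Taking $N=\max(N_1,N_2)$ (only one of the two is actually relevant) finishes the argument.

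There is essentially no serious obstacle here; the only thing to be careful about is keeping the definition $C_\lambda=\max(1,|\lambda|)$ consistent with which of $1$ and $|\lambda|^n$ is the dominant term in $|1-\lambda^n|$ in each regime. The hypothesis $|\lambda|\ne 1$ is crucial since otherwise the denominator $1-\lambda$ could vanish and the normalization $C_\lambda^n=1$ would fail to absorb oscillations of $\lambda^n$ on the unit circle.
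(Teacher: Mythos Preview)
Your proof is correct and follows essentially the same approach as the paper: both reduce to the closed-form geometric sum $|1-\lambda^n|/|1-\lambda|$ and then show that $|1-\lambda^n|/C_\lambda^n\to 1$ by treating the cases $|\lambda|<1$ and $|\lambda|>1$ separately. The only cosmetic difference is that the paper phrases the convergence directly as a limit statement, whereas you make the triangle-inequality bounds $|\lambda|^n\pm 1$ (resp.\ $1\pm|\lambda|^n$) explicit; the content is identical.
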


\begin{proof}
    When $| \lambda | < 1$, we have
    \begin{equation*}
        \lim_{n \to \infty} \left| \sum_{i=0}^{n-1} \lambda^i \right| | 1 - \lambda | = \lim_{n \to \infty} \left| \frac{1 - \lambda^n}{1 - \lambda} \right| | 1 - \lambda | = \lim_{n \to \infty} |1 - \lambda^n| = 1.
    \end{equation*}
    Therefore, there exists $N_1 \in \mathbb{N}^+$ such that
    \begin{equation*}
        1 + \epsilon \ge \left| \sum_{i=0}^{n-1} \lambda^i \right| | 1 - \lambda | \ge 1 - \epsilon.
    \end{equation*}
    holds for $n > N_1$.
    Similarly, when $|\lambda| > 1$, we have
    \begin{equation*}
        \lim_{n \to \infty} \left| \sum_{i=0}^{n-1} \lambda^i \right| \frac{| 1 - \lambda |}{|\lambda|^n} = \lim_{n \to \infty} \left| \frac{1 - \lambda^n}{1 - \lambda} \right| \frac{| 1 - \lambda |}{|\lambda|^n} = \lim_{n \to \infty} |\lambda^{-n} - 1| = 1.
    \end{equation*}
    Therefore, there exists $N_2 \in \mathbb{N}^+$ such that
    \begin{equation*}
        1 + \epsilon \ge \left| \sum_{i=0}^{n-1} \lambda^i \right| \frac{| 1 - \lambda |}{|\lambda^n|} \ge 1 - \epsilon.
    \end{equation*}
    Thus, the conclusion holds when $N = \max(N_1, N_2)$.
\end{proof}

Now we are ready to characterize the difference between $\hat{\boldsymbol{u}}^{B}$ and $\boldsymbol{u}^{B}$ under the perturbation $\delta u$.

\begin{lem} \label{thm:method:3}
    Under a perturbation $\delta u$ to the solution, there exists $N \in \mathbb{N}^+$ and $\alpha > 0$ such that when $N_t > N$, the solution difference $\| \hat{\boldsymbol{u}}^{B} - \boldsymbol{u}^{B} \|$ by block time-marching strategy satisfies
    \begin{equation*}
        \mathbb{E}_{\delta u} [\| \hat{\boldsymbol{u}}^{B} - \boldsymbol{u}^{B} \|] \ge \alpha \mu \sqrt{N_t} \beta(N_t),
    \end{equation*}
    where 
    \begin{equation*}
        \beta(x) = \begin{cases}
            1, & \mathrm{if}\; \max(|\lambda_k|) \le 1\; \mathrm{and}\; 1 \notin \{ \lambda_k \}, \\
            x, & \mathrm{if}\; \max(|\lambda_k|) = 1\; \mathrm{and}\; 1 \in \{ \lambda_k \}, \\
            | \lambda_{m} |^{x / 2}, & \mathrm{if}\; \max(|\lambda_k|) > 1.
        \end{cases}
    \end{equation*}
\end{lem}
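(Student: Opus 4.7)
The plan is to unroll the block-wise perturbation recursion, diagonalize the propagation operator $\mathbf{B}$, reduce the full norm to the contribution of a single eigendirection, and then handle the three regimes in the statement separately by invoking Lemma~\ref{lem:method:2}.

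First I would derive the error recursion. The exact and perturbed block-$i$ subproblems of Algorithm~\ref{alg:method:timeblock} share the same matrix $\tilde{\mathbf{A}}_i$ and differ only in the first $Q_x$ rows of the right-hand side, where the dependence on the previous block enters through $(\hat{\boldsymbol{u}}_{i-1}^{B})^\top\boldsymbol{\Phi}_{i-1,1}^\top$ rather than $(\boldsymbol{u}_{i-1}^{B})^\top\boldsymbol{\Phi}_{i-1,1}^\top$. Applying the normal equations, using $\tilde{\mathbf{A}}_i=[\boldsymbol{\Phi}_{i,0}^\top,\mathbf{L}_i^\top]^\top$ so that $\tilde{\mathbf{A}}_i^\top[v^\top,\boldsymbol{0}^\top]^\top=\boldsymbol{\Phi}_{i,0}^\top v$, and unfolding the definition of $\mathbf{B}_i=\mathbf{B}$, one obtains, after incorporating the fresh numerical-solver perturbation $\delta u$ at each block, the linear recurrence
\begin{equation*}
\hat{\boldsymbol{u}}_i^{B}-\boldsymbol{u}_i^{B}=\mathbf{B}\bigl(\hat{\boldsymbol{u}}_{i-1}^{B}-\boldsymbol{u}_{i-1}^{B}\bigr)+\delta u=\sum_{m=0}^{i-1}\mathbf{B}^m\,\delta u, \qquad\hat{\boldsymbol{u}}_0^{B}-\boldsymbol{u}_0^{B}=0.
\end{equation*}

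Next, using Assumption~\ref{assum:method:2}, I would expand $\delta u=\sum_{k=1}^{J_n}\delta u_k\,\boldsymbol{b}_k$ in the orthonormal eigenbasis, so that $\sum_{m=0}^{i-1}\mathbf{B}^m\delta u=\sum_k\delta u_k\bigl(\sum_{m=0}^{i-1}\lambda_k^m\bigr)\boldsymbol{b}_k$. Orthonormality then yields
\begin{equation*}
\|\hat{\boldsymbol{u}}^{B}-\boldsymbol{u}^{B}\|^2=\sum_{k=1}^{J_n}|\delta u_k|^2\,T_k,\qquad T_k:=\sum_{i=1}^{N_t}\biggl|\sum_{m=0}^{i-1}\lambda_k^m\biggr|^2.
\end{equation*}
Picking $k^{\ast}$ with $\lambda_{k^{\ast}}=1$ in the middle regime and with $|\lambda_{k^{\ast}}|=|\lambda_m|$ otherwise, and retaining only this term,
\begin{equation*}
\mathbb{E}_{\delta u}\bigl[\|\hat{\boldsymbol{u}}^{B}-\boldsymbol{u}^{B}\|\bigr]\ge\mathbb{E}_{\delta u}\bigl[|\delta u_{k^{\ast}}|\bigr]\sqrt{T_{k^{\ast}}}=\mu\sqrt{T_{k^{\ast}}},
\end{equation*}
which reduces the problem to a deterministic scalar lower bound on $T_{k^{\ast}}$.

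Finally, the three cases are handled by Lemma~\ref{lem:method:2}. In the first regime, $\lambda_{k^{\ast}}\ne 1$, and either $|\lambda_{k^{\ast}}|<1$, for which Lemma~\ref{lem:method:2} gives $|\sum_{m=0}^{i-1}\lambda_{k^{\ast}}^m|\ge(1-\epsilon)/|1-\lambda_{k^{\ast}}|$ for $i>N_1$, or $|\lambda_{k^{\ast}}|=1$ with $\lambda_{k^{\ast}}\ne 1$, for which the partial sums $(1-\lambda_{k^{\ast}}^i)/(1-\lambda_{k^{\ast}})$ are bounded and stay bounded away from $0$ for a positive density of indices $i$; in both subcases $T_{k^{\ast}}\ge cN_t$, giving $\sqrt{T_{k^{\ast}}}\gtrsim\sqrt{N_t}$. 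In the second regime, $\lambda_{k^{\ast}}=1$ forces $\sum_{m=0}^{i-1}\lambda_{k^{\ast}}^m=i$, so $T_{k^{\ast}}=\sum_{i=1}^{N_t}i^2\ge cN_t^3$ and $\sqrt{T_{k^{\ast}}}\gtrsim N_t^{3/2}=\sqrt{N_t}\cdot N_t$. In the third regime, Lemma~\ref{lem:method:2} with $|\lambda_{k^{\ast}}|>1$ gives $|\sum_{m=0}^{i-1}\lambda_{k^{\ast}}^m|\ge(1-\epsilon)|\lambda_{k^{\ast}}|^i/|1-\lambda_{k^{\ast}}|$ for $i>N_2$; restricting the $i$-sum defining $T_{k^{\ast}}$ to the upper half $i\in[N_t/2,N_t]$ yields $T_{k^{\ast}}\ge c\,(N_t/2)\,|\lambda_m|^{N_t}$ and hence $\sqrt{T_{k^{\ast}}}\gtrsim\sqrt{N_t}\,|\lambda_m|^{N_t/2}$. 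Setting $N=\max(N_1,N_2)$ and collecting constants into $\alpha$ delivers the claim in every regime.

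I expect the technical heart of the proof to be Step~1: cleanly isolating $\delta u$ as a per-block fresh error and verifying that the induced propagation from block $i-1$ to block $i$ is exactly multiplication by $\mathbf{B}$. This is where the zero-residual hypothesis of Theorem~\ref{thm:method:2} is crucial—without it each block subproblem is a genuine least-squares fit whose perturbation carries an extra residual-projection term that will not share the eigenbasis of $\mathbf{B}$, breaking the clean diagonal form in Step~2. Once the linear recurrence is in place, the case analysis in Step~3 is routine, with only the sharpening $\sum_{i=N_t/2}^{N_t}|\lambda_m|^{2i}\ge(N_t/2)|\lambda_m|^{N_t}$ in the exponential regime needed to recover the $\sqrt{N_t}$ prefactor.
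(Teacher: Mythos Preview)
Your proposal is correct and follows essentially the same route as the paper: derive the recursion $\hat{\boldsymbol{u}}_i^{B}-\boldsymbol{u}_i^{B}=\sum_{j=0}^{i-1}\mathbf{B}^{j}\,\delta u$, diagonalize in the orthonormal eigenbasis, retain a single eigendirection, and then handle the regimes via Lemma~\ref{lem:method:2} (the paper splits your first regime into two explicit subcases, carrying out a trigonometric summation when $|\lambda_{k^{\ast}}|=1$, $\lambda_{k^{\ast}}\neq 1$, and uses the AM--GM inequality rather than your upper-half restriction when $|\lambda_m|>1$). One small correction to your closing remark: the zero-residual hypothesis of Theorem~\ref{thm:method:2} is \emph{not} needed for Step~1, because the normal-equation map $(\tilde{\mathbf{A}}_i^{\top}\tilde{\mathbf{A}}_i)^{-1}\tilde{\mathbf{A}}_i^{\top}$ is linear in the right-hand side, so the propagation from block $i-1$ to block $i$ is exactly multiplication by $\mathbf{B}$ regardless of whether the least-squares residual vanishes.
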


\begin{proof}
    The difference between $\hat{\boldsymbol{u}}_i^{B}$ and $\boldsymbol{u}_i^{B}$ satisfies
    \begin{align*}
        \hat{\boldsymbol{u}}^{B}_{i} - \boldsymbol{u}^{B}_{i} =& \delta u + (\tilde{\mathbf{A}}_i^\top \tilde{\mathbf{A}}_i)^{-1} \tilde{\mathbf{A}}_i^\top \hat{\boldsymbol{b}}_i - \boldsymbol{u}_i^{B} \\
        =& \delta u + (\tilde{\mathbf{A}}_i^\top \tilde{\mathbf{A}}_i)^{-1} \tilde{\mathbf{A}}_i^\top (\tilde{\boldsymbol{b}}_i + [\mathbf{\Phi}_{i-1,1}^\top, \boldsymbol{0}_{(J_n \times Q_tQ_x)}^\top ]^\top (\hat{\boldsymbol{u}}_{i-1}^{B} - \boldsymbol{u}_{i-1}^{B}) ) - \boldsymbol{u}_i^{B} \\
        =& \delta u + \mathbf{B} (\hat{\boldsymbol{u}}_{i-1}^{B} - \boldsymbol{u}_{i-1}^{B}) \\
        =& \delta u + \mathbf{B} (\delta u + \mathbf{B} (\hat{\boldsymbol{u}}_{i-2}^{B} - \boldsymbol{u}_{i-2}^{B})) \\
        =& \cdots \\
        =& \left( \sum_{j=0}^{i-1} \mathbf{B}^{j} \right) \delta u,
    \end{align*}
    where $\hat{\boldsymbol{b}}_i \neq \tilde{\boldsymbol{b}}_i$ due to $\hat{\boldsymbol{u}}_{i-1}^{B} - \boldsymbol{u}_{i-1}^{B} \neq \boldsymbol{0}$. Notice that $\{\boldsymbol{b}_k\}$ is an orthonormal basis of $\mathbb{R}^{J_n}$, thus
    \begin{align*}
        \| \hat{\boldsymbol{u}}^{B} - \boldsymbol{u}^{B} \|^2 =& \sum_{i=1}^{N_t} \| \hat{\boldsymbol{u}}^{B}_{i} - \boldsymbol{u}^{B}_{i} \|^2 & \\
        =& \sum_{i=1}^{N_t} \left\| \sum_{k=1}^{J_n} \sum_{j=0}^{i-1} \delta u_k \lambda_k^j \boldsymbol{b}_k  \right\|^2 & \quad {\scriptstyle (\mathbf{B}^{j} \boldsymbol{b}_k = \lambda_k \mathbf{B}^{j-1} \boldsymbol{b}_k = \cdots = \lambda_k^{j} \boldsymbol{b}_k)} \\
        =& \sum_{i=1}^{N_t} \sum_{k=1}^{J_n} \left| \sum_{j=0}^{i-1} \lambda_k^j \right|^2 \delta u_k ^2 & \quad {\scriptstyle \text{(Unitary orthogonality of $\{\boldsymbol{b}_k\}$)}} \\
        \ge& \sum_{i=1}^{N_t} \left| \sum_{j=0}^{i-1} \lambda_{m}^j \right|^2 \delta u_{m}^2. & \\
    \end{align*}
    The lower bound can be estimated for four different cases.
    \begin{enumerate}
    	\item When $|\lambda_{m}| < 1$, we have
    \begin{align*}
        \mathbb{E}_{\delta u} [ \| \hat{\boldsymbol{u}}^{B} - \boldsymbol{u}^{B} \| ] \ge& \mathbb{E}_{\delta u} \left[ \left( \left( \sum_{i=1}^{N} + \sum_{i=N+1}^{N_t} \right) \left| \sum_{j=0}^{i-1} \lambda_{m}^j \right|^2 \delta u_{m} ^2 \right)^{\frac{1}{2}} \right] & \\
        \ge& \mathbb{E}_{\delta u} \left[ \left( \sum_{i=N+1}^{N_t} (1 - \delta) \frac{C_{\lambda_{m}}^{2i}}{| 1 - \lambda_{m} |^2} \delta u_{m}^2 \right)^{\frac{1}{2}} \right] & \quad {\scriptstyle (\text{Lemma~\ref{lem:method:2}})} \\
        \ge& \alpha \mu \sqrt{N_t}, & \quad {\scriptstyle (N_t - N \ge \frac{N_t}{N+1})}
    \end{align*}
    where $\alpha = \left( \frac{1 - \epsilon}{| 1 - \lambda_{m} |^2 (N + 1)} \right)^{\frac{1}{2}}$.

    \item When $|\lambda_m| = 1$ and $\lambda_m \neq 1$, let $\lambda = e^{j \theta}$, where $j^2 = -1$ and $\theta \in (0, 2\pi)$, then we have
    \begin{align*}
        \mathbb{E}_{\delta u} [ \| \hat{\boldsymbol{u}}^{B} - \boldsymbol{u}^{B} \| ] \ge& \mathbb{E}_{\delta u} \left[ \left( \sum_{i=1}^{N_t} \left| \frac{1 - e^{j i \theta}}{1 - e^{j \theta}} \right|^2 \delta u_{m} ^2 \right)^{\frac{1}{2}} \right] & \\
        =& \mathbb{E}_{\delta u} \left[ \left( \sum_{i=1}^{N_t} \frac{1 - \cos i \theta}{1 - \cos \theta} \right)^{\frac{1}{2}} | \delta u_{m} | \right] & \\
        =& \mathbb{E}_{\delta u} \left[ \frac{1}{(1 - \cos \theta)^{\frac{1}{2}}} \left( N_t - \frac{\sin \frac{N_t + 1}{2} \theta - \sin \frac{1}{2} \theta}{2 \sin \frac{1}{2} \theta} \right)^{\frac{1}{2}} | \delta u_{m} | \right] & \\
        \ge& \mathbb{E}_{\delta u} \left[ \left( \frac{N_t - \frac{1}{\sin \frac{1}{2} \theta}}{1 - \cos \theta} \right)^{\frac{1}{2}} | \delta u_{m} | \right] & \\
        =& \alpha \mu \sqrt{N_t}, & \quad {\scriptstyle (\sqrt{N_t - z} \ge \sqrt{1 - z} \sqrt{N_t} \; \mathrm{for}\; z \in [0, 1))}
    \end{align*}
    where $\alpha = 2 \left( \frac{\sin \frac{1}{2} \theta - 1}{\sin \frac{1}{2} \theta - \sin \frac{3}{2} \theta} \right)^{\frac{1}{2}}$.

    \item When $|\lambda_m| = 1$ and $\lambda_m = 1$, we have
    \begin{align*}
        \mathbb{E}_{\delta u} [ \| \hat{\boldsymbol{u}}^{B} - \boldsymbol{u}^{B} \| ] \ge& \mathbb{E}_{\delta u} \left[ \left( \frac{1}{6} N_t (N_t + 1) (2 N_t + 1) \delta u_{m} ^2 \right)^{\frac{1}{2}} \right] & \quad {\scriptstyle (\sum_{i=1}^{N_t} i^2 = \frac{1}{6} N_t (N_t + 1) (2 N_t + 1))} \\
        \ge& \alpha \mu \sqrt{N_t} N_t,
    \end{align*}
    where $\alpha = 1 / \sqrt{3}$.
    
    \item When $|\lambda_m| > 1$, we have
    \begin{align*}
        \mathbb{E}_{\delta u} [ \| \hat{\boldsymbol{u}}^{B} - \boldsymbol{u}^{B} \| ] \ge& \mathbb{E}_{\delta u} \left[ \left( \left( \sum_{i=1}^{N} + \sum_{i=N+1}^{N_t} \right) \left| \sum_{j=0}^{i-1} \lambda_{m}^j \right|^2 \delta u_{m} ^2 \right)^{\frac{1}{2}} \right] & \\
        \ge& \mathbb{E}_{\delta u} \left[ \left( \sum_{i=N+1}^{N_t} (1 - \epsilon) \frac{C_{\lambda_{m}}^{2i}}{| 1 - \lambda_{m} |^2} \delta u_{m}^2 \right)^{\frac{1}{2}} \right] & \quad {\scriptstyle (\text{Lemma~\ref{lem:method:2}})} \\
        \ge& \frac{\sqrt{1 - \epsilon}}{|1 - \lambda_m|} \mathbb{E}_{\delta u} \left[ \left( (N_t - N) |\lambda_m|^{N_t - N + 1} \delta u_{m}^2 \right)^{\frac{1}{2}} \right] & \quad {\scriptstyle (\text{Arithmetic geometric mean inequality})} \\
        \ge& \alpha \mu \sqrt{N_t} \beta(N_t), & \quad {\scriptstyle (N_t - N \ge \frac{N_t}{N+1})}
    \end{align*}
    where $\alpha = \left( \frac{(1 - \epsilon) | \lambda_{m} |^{1 - N}}{| 1 - \lambda_{m} |^2 (N + 1)} \right)^{\frac{1}{2}}$.
    \end{enumerate}
\end{proof}
If different random feature functions $\{\phi_{i, j}\}$ are chosen over different time subdomains, we observe that $\| \hat{\boldsymbol{u}}^{B} - \boldsymbol{u}^{B} \|$ also satisfies the estimation in Lemma~\ref{thm:method:3}.

\begin{lem} \label{thm:method:4}
    For the space-time random feature method, there exists $N \in \mathbb{N}^+$ and $\alpha > 0$ such that
    \begin{equation*}
        \mathbb{E}_{\delta u} [\| \hat{\boldsymbol{u}}^{S} - \boldsymbol{u}^{S} \|] \le \alpha \delta \sqrt{N_t}.
    \end{equation*}
\end{lem}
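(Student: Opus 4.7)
The plan is to exploit the key structural difference between ST-RFM and block time-marching highlighted by Theorem~\ref{thm:method:2}: in ST-RFM a single global least-squares system $\mathbf{A}\boldsymbol{u}=\boldsymbol{b}$ in $\mathbb{R}^{N_t J_n}$ is solved in one shot, so the matrix $\mathbf{B}$ that amplified the per-block error $\delta u$ into the geometric series $\sum_{j=0}^{i-1}\mathbf{B}^j\delta u$ never enters. The only discrepancy between $\hat{\boldsymbol{u}}^S$ and $\boldsymbol{u}^S$ is the numerical solver error on this single problem, which I will simply denote by $\delta u^S\in\mathbb{R}^{N_t J_n}$.

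First I would write $\hat{\boldsymbol{u}}^S-\boldsymbol{u}^S=\delta u^S$ directly, with no propagation term, and partition the global perturbation blockwise as $\delta u^S=((\delta u^S_1)^\top,\ldots,(\delta u^S_{N_t})^\top)^\top$ with $\delta u^S_i\in\mathbb{R}^{J_n}$. Adopting the same statistical model as in Lemma~\ref{thm:method:3}, each $\delta u^S_i$ is expanded in the orthonormal eigenbasis $\{\boldsymbol{b}_k\}_{k=1}^{J_n}$ of $\mathbf{B}$ with i.i.d. coefficients of mean zero and second moment $\delta^2$. Orthonormality gives $\|\delta u^S_i\|^2=\sum_{k=1}^{J_n}(\delta u^S_{i,k})^2$, hence
\begin{equation*}
\|\hat{\boldsymbol{u}}^S-\boldsymbol{u}^S\|^2=\sum_{i=1}^{N_t}\sum_{k=1}^{J_n}(\delta u^S_{i,k})^2.
\end{equation*}

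Taking expectation and using linearity yields $\mathbb{E}_{\delta u}[\|\hat{\boldsymbol{u}}^S-\boldsymbol{u}^S\|^2]=N_t J_n\delta^2$, and Jensen's inequality upgrades this into the first-moment bound
\begin{equation*}
\mathbb{E}_{\delta u}[\|\hat{\boldsymbol{u}}^S-\boldsymbol{u}^S\|]\le\sqrt{\mathbb{E}_{\delta u}[\|\hat{\boldsymbol{u}}^S-\boldsymbol{u}^S\|^2]}=\sqrt{J_n}\,\delta\sqrt{N_t},
\end{equation*}
so the conclusion holds with $\alpha=\sqrt{J_n}$ and any $N\in\mathbb{N}^+$ (the $N$ is only retained to parallel Lemma~\ref{thm:method:3}).

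The proof is thus considerably shorter than that of Lemma~\ref{thm:method:3}; the whole content of the argument is the absence of error accumulation. The one point that deserves care, and which I would expect to be the principal obstacle in writing it up cleanly, is justifying that the global solver perturbation $\delta u^S$ has the same per-coordinate statistics as the per-block $\delta u$ used in the block time-marching analysis. This is natural because in both cases the noise models the finite-precision error per degree of freedom on a well-posed least-squares solve, and once this identification is made explicit the $\sqrt{N_t}$ scaling, as opposed to the exponential-in-$N_t$ behavior of $\beta(N_t)$, emerges immediately from orthonormality of $\{\boldsymbol{b}_k\}$ and Jensen's inequality.
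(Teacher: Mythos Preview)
Your proposal is correct and follows essentially the same route as the paper: write $\hat{\boldsymbol u}^S-\boldsymbol u^S$ as the raw solver perturbation with no $\mathbf B$-propagation, expand blockwise in the orthonormal basis $\{\boldsymbol b_k\}$, compute the second moment as $N_tJ_n\delta^2$, and pass to the first moment via $(\mathbb E\|\cdot\|)^2\le\mathbb E\|\cdot\|^2$. The paper labels this last step ``Schwarz inequality'' rather than Jensen, and records the constant as $\alpha=\sqrt{M}$ rather than your $\alpha=\sqrt{J_n}$, but the argument is identical.
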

\begin{proof}
    \begin{align*}
        (\mathbb{E}_{\delta u} [\| \hat{\boldsymbol{u}}^{S} - \boldsymbol{u}^{S} \|])^2 \le& \mathbb{E}_{\delta u}[\| \hat{\boldsymbol{u}}^{S}_{i} - \boldsymbol{u}^{S}_{i} \|^2] & {\scriptstyle (\text{Schwarz inequality})} \\ 
        =& \mathbb{E}_{\delta u} \left[ \left\| \sum_{i=1}^{N_t} \sum_{k=1}^{M} \delta u_{k,i} \boldsymbol{b}_k \right\|^2 \right] & \\
        =& \alpha \delta^2 N_t,
    \end{align*}
    where $\alpha = \sqrt{M}$.
\end{proof}

\begin{lem} \label{thm:method:5}
    Let two functions $u_{M}(\boldsymbol{x}, t)$ and $\hat{u}_{M}(\boldsymbol{x}, t)$ be of the form~\eqref{eqn:method:tnsol} with $N_x = 1$ and the coefficients being $\boldsymbol{u}$, $\hat{\boldsymbol{u}}$, respectively, then there exists $C_1 \ge C_2 > 0$ such that
    \begin{equation*}
        C_1 \| \hat{\boldsymbol{u}} - \boldsymbol{u} \| \ge \| \hat{u}_{M}(\boldsymbol{x}, t) - u_{M}(\boldsymbol{x}, t) \|_{L^2} \ge C_2 \| \hat{\boldsymbol{u}} - \boldsymbol{u} \|.
    \end{equation*}
\end{lem}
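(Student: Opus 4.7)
The plan is to reduce the claim to the spectral theorem for a Gram matrix. With $N_x = 1$, writing $\boldsymbol{v} = \hat{\boldsymbol{u}} - \boldsymbol{u}$ and using linearity of~\eqref{eqn:method:tnsol}, the difference can be expressed as
\begin{equation*}
    \hat{u}_{M}(\boldsymbol{x}, t) - u_{M}(\boldsymbol{x}, t) = \sum_{i_t=1}^{N_t} \sum_{j=1}^{J_n} v_{i_t, j} \, \varphi_{i_t, j}(\boldsymbol{x}, t),
    \qquad \varphi_{i_t, j}(\boldsymbol{x}, t) := \psi_{1}(\boldsymbol{x}) \psi_{i_t}(t) \phi_{1, i_t, j}(\boldsymbol{x}, t).
\end{equation*}
So $\hat{u}_{M} - u_{M}$ is a linear combination of the $M = N_t J_n$ fixed functions $\{\varphi_{i_t,j}\}$, and the coefficient vector in that expansion is exactly $\boldsymbol{v}$.

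Next, I would introduce the Gram matrix $\mathbf{G}\in\mathbb{R}^{M\times M}$ whose entries are the $L^2$ inner products $\mathbf{G}_{(i_t,j),(i_t',j')} = \langle \varphi_{i_t,j}, \varphi_{i_t',j'}\rangle_{L^2(\Omega\times[0,T])}$. Expanding the squared $L^2$ norm immediately gives the quadratic form
\begin{equation*}
    \|\hat{u}_{M} - u_{M}\|_{L^2}^2 = \boldsymbol{v}^\top \mathbf{G} \boldsymbol{v}.
\end{equation*}
Since $\mathbf{G}$ is symmetric and positive semidefinite by construction, the Rayleigh quotient characterization will yield $\lambda_{\min}(\mathbf{G})\|\boldsymbol{v}\|^2 \le \boldsymbol{v}^\top \mathbf{G}\boldsymbol{v} \le \lambda_{\max}(\mathbf{G})\|\boldsymbol{v}\|^2$, from which setting $C_1 = \sqrt{\lambda_{\max}(\mathbf{G})}$ and $C_2 = \sqrt{\lambda_{\min}(\mathbf{G})}$ delivers the stated double inequality, provided $C_2 > 0$.

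The only nontrivial step is to verify $\lambda_{\min}(\mathbf{G}) > 0$, i.e.\ that the family $\{\varphi_{i_t,j}\}$ is linearly independent in $L^2(\Omega\times[0,T])$. I would argue this in two stages. First, across different temporal subdomains the PoU supports $\mathrm{supp}(\psi_{i_t})$ are either disjoint (for $\psi_a$) or overlap only in thin transition layers (for $\psi_b$), so the $\mathbf{G}$ decomposes into a block-dominant structure indexed by $i_t$. Second, within a single temporal subdomain, the random feature functions $\{\phi_{1,i_t,j}\}_{j=1}^{J_n}$ are linearly independent almost surely with respect to the sampling distribution of $(\boldsymbol{k}_{1,i_t,j}^{x}, k_{1,i_t,j}^{t}, b_{1,i_t,j})$, because for continuous nonpolynomial activations $\sigma$ any nontrivial linear relation would impose a measure-zero algebraic condition on the random parameters. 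Combining these two observations, $\mathbf{G}$ is positive definite almost surely, so $C_2 > 0$.

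The main obstacle is precisely this positive definiteness of $\mathbf{G}$: the preceding universal approximation results (Theorem~\ref{thm:method:1} and Corollary~\ref{coro:method:2}) guarantee density but say nothing about linear independence of a finite random sample. A fully rigorous treatment would either invoke an almost-sure linear independence result for random feature families or add it as a hypothesis; in the $\psi_b$ case the small overlaps between PoU supports also need a quick check that they do not destroy the block structure. Once this is granted, the constants $C_1, C_2$ depend only on $\{\varphi_{i_t,j}\}$ (hence on the frozen random parameters, the PoU, and $\Omega\times[0,T]$) and not on $\boldsymbol{u}$ or $\hat{\boldsymbol{u}}$, which is exactly what the lemma claims.
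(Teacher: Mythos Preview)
Your Gram-matrix argument is correct and is in fact more careful than the paper's own proof. The paper writes the identity
\[
\|\hat u_M - u_M\|_{L^2}^2 \;=\; \sum_{n=1}^{N_t}\sum_{i=1}^{J_n}(\hat u_{n,i}-u_{n,i})^2\int_{\Omega\times[0,T]}\phi_{n,i}(\boldsymbol{x},t)^2\,\mathrm{d}\boldsymbol{x}\,\mathrm{d}t
\]
and then sets $C_1=\max_{n,i}\|\phi_{n,i}\|_{L^2}$, $C_2=\min_{n,i}\|\phi_{n,i}\|_{L^2}$. That displayed equality implicitly treats the family $\{\psi_{i_t}\phi_{n,i}\}$ as $L^2$-orthogonal, which is not established anywhere (even with the disjoint PoU $\psi_a$, the cross terms between different $j$ within the same temporal subdomain do not vanish). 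Your route via $\|\hat u_M-u_M\|_{L^2}^2=\boldsymbol v^\top\mathbf G\boldsymbol v$ and the Rayleigh quotient is the rigorous way to obtain such two-sided bounds, and it makes explicit that $C_2>0$ is equivalent to linear independence of the $\{\varphi_{i_t,j}\}$ in $L^2$. You are also right to flag that this linear independence is an additional hypothesis (almost sure with respect to the random parameters) rather than a consequence of the preceding density results; the paper's choice $C_2=\min_{n,i}\|\phi_{n,i}\|_{L^2}$ sidesteps this only because of the unjustified orthogonality step. In short: different route, and yours is the sound one.
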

\begin{proof}
    Notice that
    \begin{equation*}
        \| \hat{u}_{M}(\boldsymbol{x}, t) - u_{M}(\boldsymbol{x}, t) \|_{L^{2}} = \left( \sum_{n=1}^{N_t} \sum_{i=1}^{J_n} (\hat{u}_{n,i} - u_{n, i})^2 \int_{\Omega \times [0, T]} \phi_{n, i}(\boldsymbol{x}, t)^2 \; \mathrm{d}\boldsymbol{x} \mathrm{d}t \right)^{\frac{1}{2}} \le C_1 \| \boldsymbol{\hat{u}} - \boldsymbol{u} \|,
    \end{equation*}
    where $C_1 = \max_{n,i} \| \phi_{n, i}(\boldsymbol{x}, t) \|_2$, and
    \begin{equation*}
        \| \hat{u}_{M}(\boldsymbol{x}, t) - u_{M}(\boldsymbol{x}, t) \|_{L^{2}} = \left( \sum_{n=1}^{N_t} \sum_{i=1}^{J_n} (\hat{u}_{n,i} - u_{n, i})^2 \int_{\Omega \times [0, T]} \phi_{n, i}(\boldsymbol{x}, t)^2 \; \mathrm{d}\boldsymbol{x} \mathrm{d}t \right)^{\frac{1}{2}} \ge C_2 \| \boldsymbol{\hat{u}} - \boldsymbol{u} \|,
    \end{equation*}
    where $C_2 = \min_{n,i} \| \phi_{n, i}(\boldsymbol{x}, t) \|_2$. Moreover, for all $\phi_{n, i} \in \mathcal{L}^2(\Omega \times [0, T])$, we have $C_1, C_2 < +\infty$.
\end{proof}

\begin{thm}[Lower bound] \label{coro:method:3}
    Denote $u_{e}$ the exact solution. Let $\delta u$ be the error between the numerical solution and the exact solution to the least-squares problem. Given $\epsilon > 0$, we assume that
    \begin{equation*}
    \| u^{B}_{M}(\boldsymbol{x}, t) - u_e(\boldsymbol{x}, t) \|_{L^2(\Omega\times[t_n,t_{n+1}])} \leq \epsilon, \quad\forall\; n=0,\cdots, N.
    \end{equation*}
	Then, there exists $N \in \mathbb{N}^+$ and $\alpha > 0$ such that when $N_t > N$,
	\begin{equation}
        \mathbb{E}_{\delta u} [\| \hat{u}^{B}_{M}(\boldsymbol{x}, t) - u_e(\boldsymbol{x}, t) \|_{L^{2}}] \ge \alpha \mu \sqrt{N_t} \beta(N_t) - \epsilon \sqrt{N_t}.
        \label{eqn:coro:method:3}
    \end{equation}
\end{thm}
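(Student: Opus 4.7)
The plan is to combine a reverse triangle inequality with the two already-proved ingredients: the reverse bound in Lemma~\ref{thm:method:5} that converts a coefficient-vector norm into an $L^2$ norm, and the lower bound in Lemma~\ref{thm:method:3} on $\|\hat{\boldsymbol u}^{B}-\boldsymbol u^{B}\|$. The assumption $\|u^{B}_{M}-u_e\|_{L^2(\Omega\times[t_n,t_{n+1}])}\le\epsilon$ on each temporal subdomain controls the gap between $u^{B}_{M}$ (the exact minimizer) and $u_e$ over the whole time interval, so the only task is to make sure the error from the solver (i.e.\ the gap between $\hat{u}^{B}_{M}$ and $u^{B}_{M}$) dominates.

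First, I would apply the triangle inequality pointwise in $\omega$ (i.e.\ before taking expectation) to write
\begin{equation*}
\|\hat{u}^{B}_{M}-u_e\|_{L^2(\Omega\times[0,T])}\;\ge\;\|\hat{u}^{B}_{M}-u^{B}_{M}\|_{L^2(\Omega\times[0,T])}-\|u^{B}_{M}-u_e\|_{L^2(\Omega\times[0,T])}.
\end{equation*}
For the subtracted term, since $[0,T]$ is partitioned into $N_t$ subintervals, additivity of the $L^2$ norm gives
\begin{equation*}
\|u^{B}_{M}-u_e\|_{L^2(\Omega\times[0,T])}^2=\sum_{n=0}^{N_t-1}\|u^{B}_{M}-u_e\|_{L^2(\Omega\times[t_n,t_{n+1}])}^2\le N_t\,\epsilon^2,
\end{equation*}
so this piece contributes at most $\epsilon\sqrt{N_t}$, matching the second term on the right-hand side of \eqref{eqn:coro:method:3}.

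For the dominant term, I would invoke the lower bound in Lemma~\ref{thm:method:5} to pass from the $L^2$ norm to the coefficient norm, $\|\hat{u}^{B}_{M}-u^{B}_{M}\|_{L^2}\ge C_2\|\hat{\boldsymbol u}^{B}-\boldsymbol u^{B}\|$, and then apply Lemma~\ref{thm:method:3} after taking expectation over $\delta u$ to get $\mathbb{E}_{\delta u}[\|\hat{\boldsymbol u}^{B}-\boldsymbol u^{B}\|]\ge \alpha_0\mu\sqrt{N_t}\beta(N_t)$ for some $\alpha_0>0$ once $N_t$ exceeds the threshold $N$ supplied by Lemma~\ref{thm:method:3}. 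Combining the two inequalities and relabeling $\alpha:=C_2\alpha_0$ produces \eqref{eqn:coro:method:3}.

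The only subtle point, and where I would be most careful, is the order of operations: the reverse triangle inequality is applied before expectation, so linearity of $\mathbb{E}$ leaves the $\epsilon\sqrt{N_t}$ term unchanged (it is a deterministic upper bound that does not depend on $\delta u$), while the expectation of the first term is exactly what Lemma~\ref{thm:method:3} controls. There is no technical obstacle beyond bookkeeping — all the work has already been done in Lemmas~\ref{thm:method:3} and \ref{thm:method:5}; this theorem is essentially an assembly step that turns the coefficient-space estimate into a function-space estimate and subtracts off the approximation error of the idealized block time-marching solution.
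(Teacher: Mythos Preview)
Your proposal is correct and follows essentially the same approach as the paper: reverse triangle inequality to split off $\|u^{B}_{M}-u_e\|_{L^2}$, bound that term by $\epsilon\sqrt{N_t}$ via additivity over the $N_t$ subintervals, and control the remaining $\|\hat{u}^{B}_{M}-u^{B}_{M}\|_{L^2}$ term through Lemma~\ref{thm:method:5} followed by Lemma~\ref{thm:method:3}. If anything, you are slightly more explicit than the paper in tracking the constant $C_2$ from Lemma~\ref{thm:method:5} and in noting that the triangle inequality is applied before expectation so the deterministic $\epsilon\sqrt{N_t}$ passes through unchanged.
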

\begin{proof}
    Notice that
    \begin{align*}
        \mathbb{E}_{\delta u} [\| \hat{u}^{B}_{M}(\boldsymbol{x}, t) - u_e(\boldsymbol{x}, t) \|_{L^{2}}] \ge& \mathbb{E}_{\delta u} [\| \hat{u}^{B}_{M}(\boldsymbol{x}, t) - u^{B}_{M}(\boldsymbol{x}, t) \|_{L^{2}}] - \| u^{B}_{M}(\boldsymbol{x}, t) - u_e(\boldsymbol{x}, t) \|_{L_{2}} & \quad {\scriptstyle (\text{Triangle inequality})} \\
        \ge& \alpha \mu \sqrt{N_t} \beta(N_t) - \left( \sum_{n=1}^{N_t} \epsilon^2 \right)^{\frac{1}{2}} & \quad {\scriptstyle (\text{Lemma~\ref{thm:method:3} and Lemma~\ref{thm:method:5}})} \\
        =& \alpha \mu \sqrt{N_t} \beta(N_t) - \epsilon \sqrt{N_t}. &
    \end{align*}
\end{proof}

\begin{thm}[Upper bound] \label{coro:method:4}
    Denote $u_{e}$ the exact solution. Let $\delta u$ be the error between the numerical solution and the exact solution to the least-squares problem. Given $\epsilon > 0$, we assume that
	\begin{equation*}
		\| u^{S}_{M}(\boldsymbol{x}, t) - u_e(\boldsymbol{x}, t) \|_{L^2(\Omega\times[t_n,t_{n+1}])} \leq \epsilon, \quad\forall\; n=0,\cdots, N.
	\end{equation*}
	Then, there exists $N \in \mathbb{N}^+$ and $\alpha > 0$ such that when $N_t > N$,
    \begin{equation}
        \mathbb{E}_{\delta u} [\| \hat{u}^{S}(\boldsymbol{x}, t) - u_e(\boldsymbol{x}, t) \|_{L^{2}}] \le \alpha \delta \sqrt{N_t} + \epsilon \sqrt{N_t}.
        \label{eqn:coro:method:4}
    \end{equation}
\end{thm}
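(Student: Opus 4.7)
The plan is to mirror the structure of the lower bound proof in Theorem~\ref{coro:method:3}, but invoke the triangle inequality in the other direction and substitute the upper bound of Lemma~\ref{thm:method:4} in place of the lower bound of Lemma~\ref{thm:method:3}. Concretely, I first decompose the total error as
\begin{equation*}
\| \hat{u}^{S}_{M}(\boldsymbol{x},t) - u_{e}(\boldsymbol{x},t) \|_{L^{2}} \le \| \hat{u}^{S}_{M}(\boldsymbol{x},t) - u^{S}_{M}(\boldsymbol{x},t) \|_{L^{2}} + \| u^{S}_{M}(\boldsymbol{x},t) - u_{e}(\boldsymbol{x},t) \|_{L^{2}},
\end{equation*}
so that only the first (stochastic) term interacts with the perturbation $\delta u$. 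The second term is handled exactly as in the lower bound by summing the hypothesized per-subdomain $L^{2}$ bound: since the subdomains in time are disjoint and cover $[0,T]$, we get $\| u^{S}_{M} - u_{e} \|_{L^{2}}^{2} \le \sum_{n=1}^{N_{t}} \epsilon^{2}$, which yields the deterministic $\epsilon \sqrt{N_{t}}$ contribution on the right-hand side of~\eqref{eqn:coro:method:4}.

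Next I would take expectations over $\delta u$ and pass the expectation through the triangle inequality, then estimate the random term by chaining Lemma~\ref{thm:method:5} and Lemma~\ref{thm:method:4}: the former produces a constant $C_{1}$ that converts the $L^{2}$-norm of a random-feature expansion into the Euclidean norm of its coefficient vector, giving $\| \hat{u}^{S}_{M} - u^{S}_{M} \|_{L^{2}} \le C_{1} \| \hat{\boldsymbol{u}}^{S} - \boldsymbol{u}^{S} \|$, and the latter bounds $\mathbb{E}_{\delta u}[\| \hat{\boldsymbol{u}}^{S} - \boldsymbol{u}^{S} \|]$ by $\alpha' \delta \sqrt{N_{t}}$. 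Absorbing $C_{1}$ into the constant yields an overall $\alpha \delta \sqrt{N_{t}}$ on the first term, and combining with the deterministic piece gives~\eqref{eqn:coro:method:4} verbatim. The existence of $N$ just carries over from the hypothesis of Lemma~\ref{thm:method:4}.

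There is no real obstacle here: the machinery has been entirely set up, and unlike the lower bound there is no need for case analysis on $|\lambda_{m}|$ because the ST-RFM formulation solves one global least-squares problem, so the perturbation does not get amplified by an iterated operator $\mathbf{B}$. The only point that requires a bit of care is making sure that the orthonormality of $\{\boldsymbol{b}_{k}\}$ and the i.i.d.\ assumption on $\{\delta u_{k,i}\}$ are applied correctly inside Lemma~\ref{thm:method:4} to recover a variance growing like $N_{t}$ rather than $N_{t}^{2}$; this is essentially the Schwarz/Jensen step already performed in Lemma~\ref{thm:method:4}, so no new work is needed. The conclusion then follows by a single application of the triangle inequality after taking expectations.
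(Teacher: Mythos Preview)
Your proposal is correct and matches the paper's proof essentially line for line: triangle inequality to split off the deterministic approximation error, summation of the per-subdomain bounds to get $\epsilon\sqrt{N_t}$, and Lemma~\ref{thm:method:5} chained with Lemma~\ref{thm:method:4} (absorbing $C_1$ into $\alpha$) to bound the stochastic term by $\alpha\delta\sqrt{N_t}$. There is nothing to add.
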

\begin{proof}
    Notice that
    \begin{align*}
        \mathbb{E}_{\delta u} [\| \hat{u}^{S}(\boldsymbol{x}, t) - u_e(\boldsymbol{x}, t) \|_{L^{2}}] \le& \mathbb{E}_{\delta u} [\| \hat{u}^{S}(\boldsymbol{x}, t) - u^{S}(\boldsymbol{x}, t) \|_{L^{2}}] + \| u^{S}(\boldsymbol{x}, t) + u_e(\boldsymbol{x}, t) \|_{L_{2}} & \quad {\scriptstyle (\text{Triangle inequality})} \\
        \le& \alpha \sigma \sqrt{N_t} + \left( \sum_{n=1}^{N_t} \epsilon^2 \right)^{\frac{1}{2}} & \quad {\scriptstyle (\text{Lemma~\ref{thm:method:4} and Lemma~\ref{thm:method:5}})} \\
        =& \alpha \sigma \sqrt{N_t} + \sqrt{N_t} \epsilon. &
    \end{align*}
\end{proof}

From Theorem~\ref{coro:method:3} and Theorem~\ref{coro:method:4}, we see that the error of solving the least-squares problem by the block time-marching strategy increases exponentially in time, while the error in the ST-RFM does not have this problem. These are also confirmed by the numerical results given below.

\section{Numerical Results} \label{sec:num}

In this section, we present numerical results for one-dimensional and two-dimensional problems with simple geometry and a two-dimensional problem with complex geometry to demonstrate the effectiveness of the ST-RFM and confirm theoretical results.

\subsection{One-dimensional Problems} \label{sec:num:oned}

\subsubsection{Heat Equation} \label{sec:num:oned:heat}

Consider the following problem
\begin{equation} 
\left\{
\begin{aligned}
& \partial_t u(x, t) - \alpha^2 \partial_x^2 u(x, t) = 0, \qquad & x \in [x_0, x_1], t \in [0, T], \\
& u(x_0, t) = g_1(t), & t \in [0, T], \\
& u(x_1, t) = g_2(t), & t \in [0, T], \\
& u(x, 0) = h(x), & x \in [x_0, x_1], \\
\end{aligned}
\right.
\label{eqn:num:heat}
\end{equation}
where $ \alpha = \pi / 2$, $x_0 = 0$, $x_1 = 12$ and $T=10$. The exact solution is chosen to be
\begin{equation}
u_e(x, t) = 2 \sin(\alpha x) e^{-t}. \label{eqn:num:heat:sol}
\end{equation}
We choose the initial condition $h(x)$ by restricting Eq.~\eqref{eqn:num:heat:sol} to $t=0$, and the boundary conditions $g_1(t)$ and $g_2(t)$ by restricting Eq.~\eqref{eqn:num:heat:sol} to $x=x_0$ and $x=x_1$, respectively.

Set the default hyper-parameters $N_x=2$, $N_t=5$, $Q_x=20$, $Q_t=20$, $J_n=400$ and $N_b=1$. Numerical solutions and errors of STC (Eq.~\eqref{eqn:method:concat}) and SoV (Eq.~\eqref{eqn:method:sov}) are plotted in Figure~\ref{exp:fig:heat}. The $L^{\infty}$  error in the ST-RFM is small ($< 4.5e-6$), which indicates that both random feature functions have strong approximation properties.
\begin{figure}[htbp]
	\centering
	\subfigure[STC.]{
		\includegraphics[width=.22\linewidth]{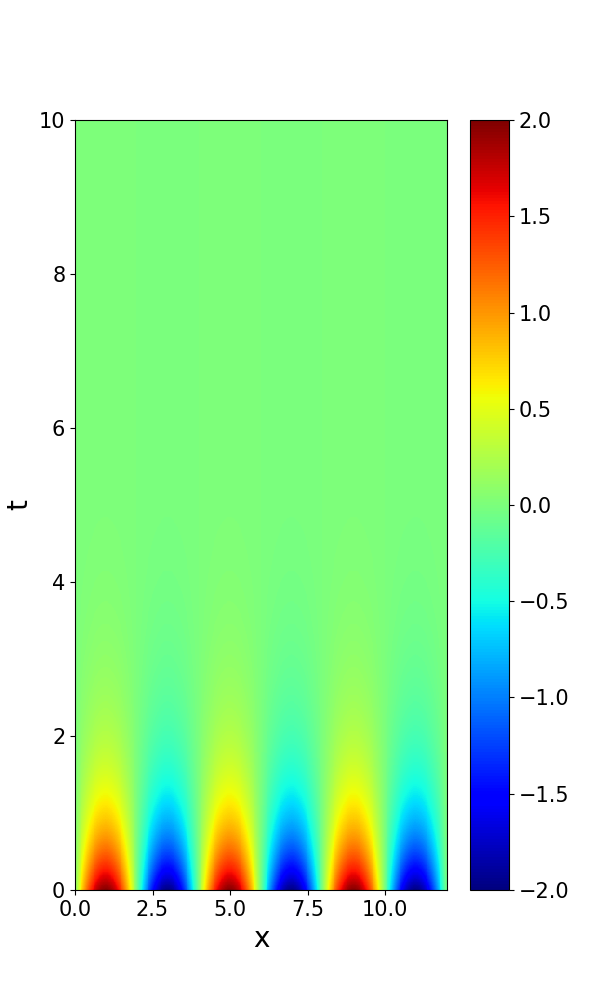}
		\includegraphics[width=.22\linewidth]{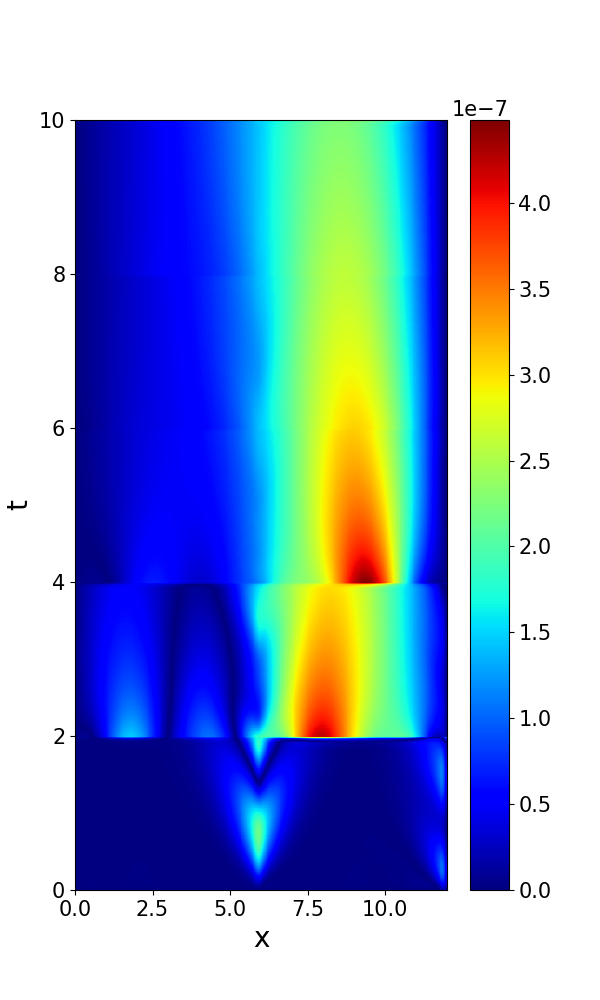}
	}
	\subfigure[SoV.]{
		\includegraphics[width=.22\linewidth]{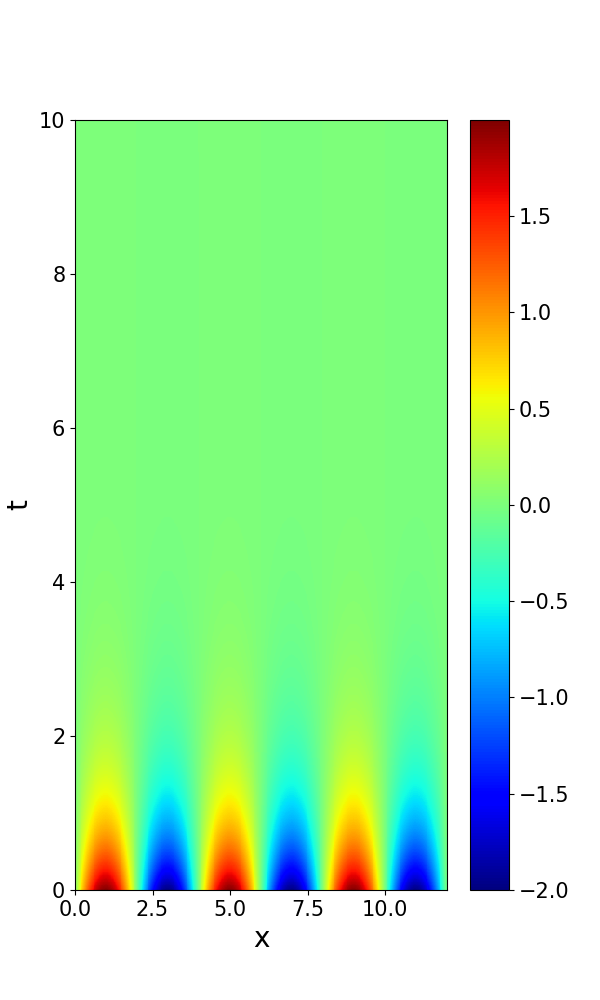}
		\includegraphics[width=.22\linewidth]{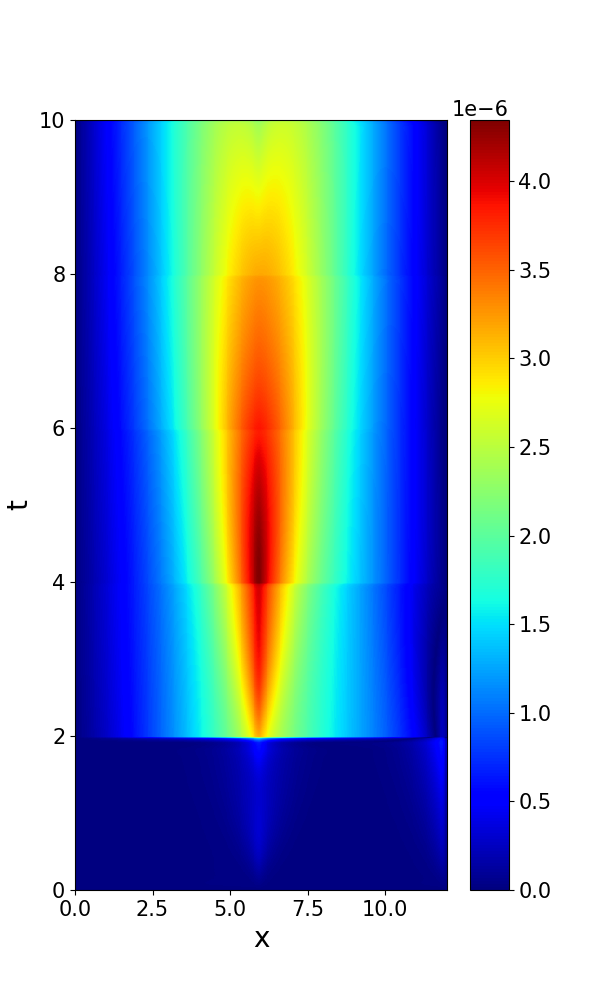}
	}
	\caption{Results for the heat equation. (a) STC: Left, numerical solution; right, absolute error. (b) SoV: Left, numerical solution; right, absolute error.}
	\label{exp:fig:heat}
\end{figure}

Next, we report the convergence behavior with respect to different parameters in Figure~\ref{exp:fig:heat1}(a)-(d). In Figure~\ref{exp:fig:heat1:a}, we set $N_b=1$, $N_x=2$, $J_n=400$, $Q_x=Q_t=20$ and $N_t=1,\cdots,5$ to verify the convergence with respect to $N_t$. In Figure~\ref{exp:fig:heat1:b}, we set $N_x=2$, $N_t=1$, $J_n=400$, $Q_x=Q_t=20$ and $N_b=1,\cdots,5$ to verify the convergence with respect to $N_b$. In Figure~\ref{exp:fig:heat1:c}, we set $N_b=5$, $N_x=2$, $N_t=1$, $Q_x=Q_t=20$ and $J_n=50,100,200,300,400$ to verify the convergence with respect to $J_n$. In Figure~\ref{exp:fig:heat1:d}, we set $N_b=5$, $N_x=2$, $N_t=1$, $J_n=400$ and $Q_x=Q_t=5,10,15,20,25$ to verify the convergence with respect to $Q_x$/$Q_t$. A clear trend of spectral accuracy is observed for the ST-RFM in both spatial and temporal directions.

Now, we compare STC and SoV in Figure~\ref{exp:fig:heat1:e}, where the default hyper-parameter setting is used. For this example, STC performs better than SoV. The comparison between the block time-marching strategy and the ST-RFM is plotted in Figure~\ref{exp:fig:heat1:f}, where we set $N_b=5$ and $N_t=1$ for the block time-marching strategy and $N_b=1$ and $N_t=5$ for the ST-RFM. The $L^2$ error of solution by the block time-marching strategy increases exponentially fast with respect to the number of blocks, while the error in the ST-RFM remains almost flat over all time subdomains.

\begin{figure}[htbp]
	\centering
	\subfigure[Convergence w.r.t. $N_t$.\label{exp:fig:heat1:a}]{
		\includegraphics[width=.30\linewidth]{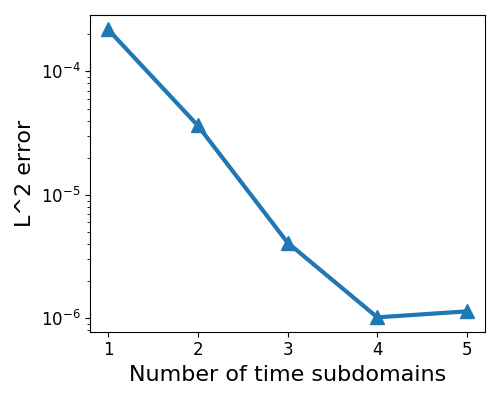}
	}
	\subfigure[Convergence w.r.t. $N_b$.\label{exp:fig:heat1:b}]{
		\includegraphics[width=.30\linewidth]{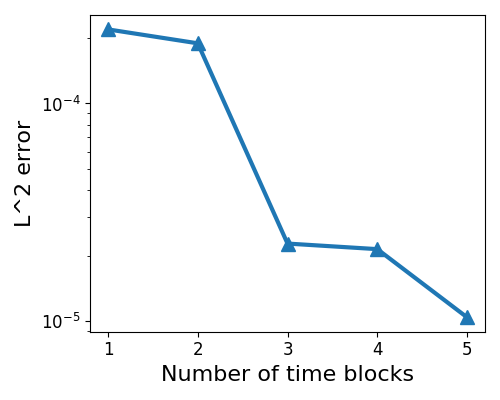}
	}
	\subfigure[Convergence w.r.t. $J_n$.\label{exp:fig:heat1:c}]{
		\includegraphics[width=.30\linewidth]{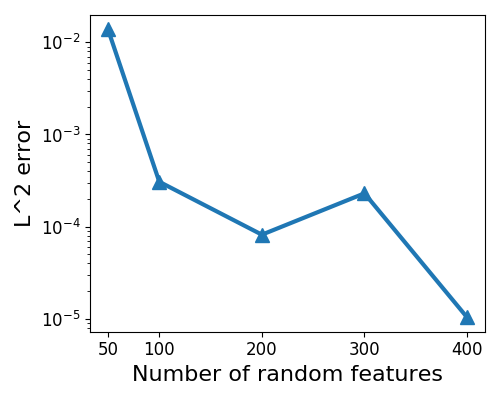}
	}
	\subfigure[Convergence w.r.t. $Q$.\label{exp:fig:heat1:d}]{
		\includegraphics[width=.30\linewidth]{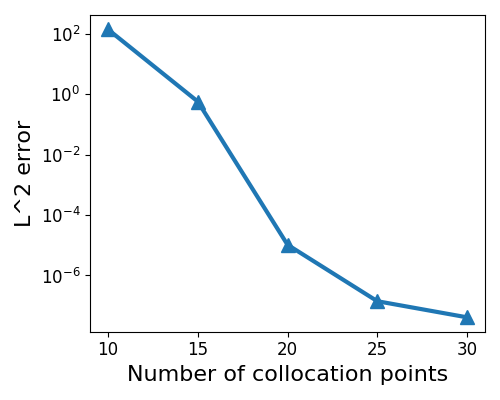}
	}
	\subfigure[STC v.s. SoV.\label{exp:fig:heat1:e}]{
		\includegraphics[width=.30\linewidth]{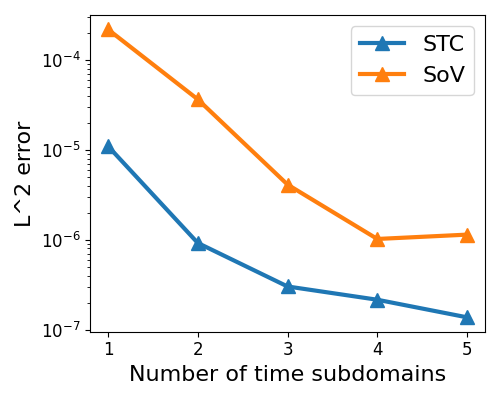}
	}
	\subfigure[Block time-marching v.s. ST-RFM.\label{exp:fig:heat1:f}]{
		\includegraphics[width=.30\linewidth]{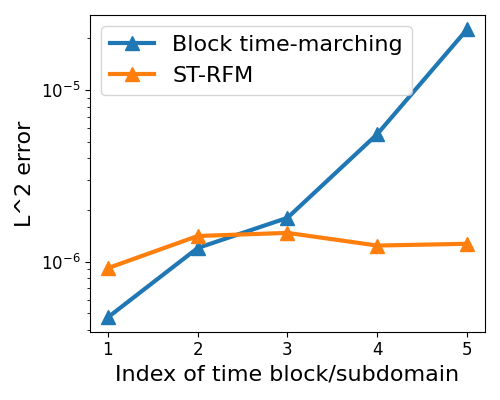}
	}
	\caption{Convergence behavior as different hyper-parameters are varied for the heat equation. (a) the number of time subdomains in the ST-RFM; (b) the number of time blocks in the block time-marching strategy; (c) the number of random features in each direction; (d) the number of collocation points in each direction; (e) comparison of STC and SoV; (f) comparison of the ST-RFM and the block time-marching strategy.}
	\label{exp:fig:heat1}
\end{figure}

\subsubsection{Heat Equation with Nonsmooth Initial Condition} \label{sec:num:oned:heatnonsmooth}

Consider the heat equation~\eqref{eqn:num:heat} with $x_0 = 0$, $x_1 = 8$ and the nonsmooth initial condition as follows:
\begin{equation}
    h(x) = 2 \mathds{I}_{0 \le x < 4} \sin \left( \pi x / 2 \right) + 2 \mathds{I}_{4 \le x \le 8} \sin \left( {\pi} x \right).
    \label{eqn:num:heatnonsmooth:initial}
\end{equation}
It is easy to check that $h(x)$ only belongs to $C([0, 8])$.

Since an exact solution is not analytically available for the problem under consideration, we employ a set of hyper-parameters, namely $N_{x}=2$, $N_{t}=5$, $Q_{x}=30$, $Q_{t}=50$, $J_{n}=250$, and $N_{b}=1$, to obtain a reference solution. Numerical convergence of ST-RFM is evaluated in terms of the relative $L^{2}$ error, which is depicted in Figure~\ref{exp:fig:heat-nonsmooth}. We use STC to implement the basis function of ST-RFM here. Apparently, spectral accuracy is observed. However, the accuracy is only around $10^{-3}$, two orders of magnitude larger than that in the smooth case~\eqref{eqn:num:heat}, which is attributed to the regularity deficiency.

\begin{figure}
    \centering
    \subfigure[Convergence w.r.t. time subdomains.]{
        \includegraphics[width=.3\linewidth]{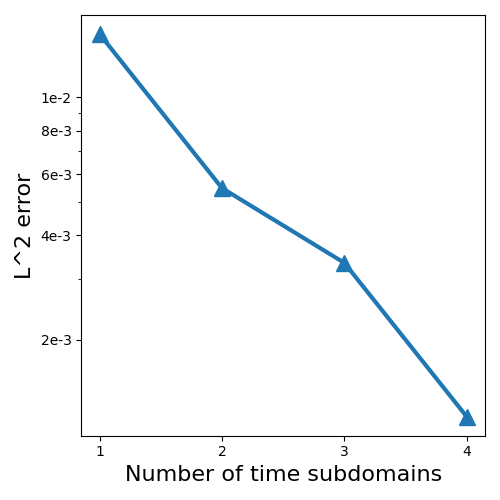}
    }
    \subfigure[Convergence w.r.t. $J_{n}$.]{
        \includegraphics[width=.3\linewidth]{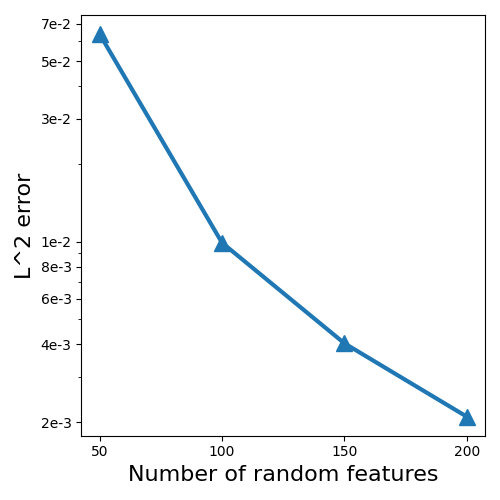}
    }
    \subfigure[Convergence w.r.t. $Q$.]{
        \includegraphics[width=.3\linewidth]{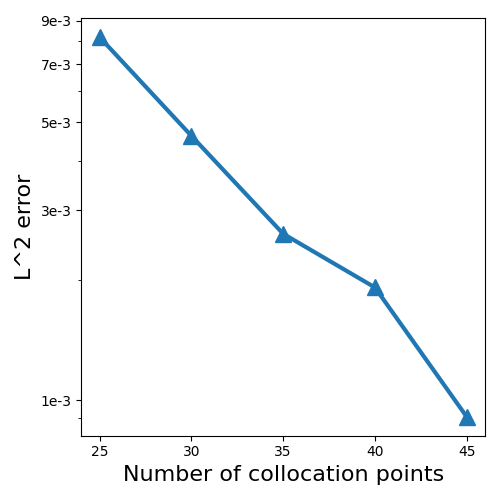}
    }
	\caption{Convergence behavior as different hyper-parameters are varied for the heat equation with nonsmooth initial condition. (a) the number of time subdomains in the ST-RFM; (b) the number of random features in each direction; (c) the number of collocation points in the temporal direction.}
	\label{exp:fig:heat-nonsmooth}
\end{figure}

\subsubsection{Wave Equation} \label{sec:num:oned:wave}

Consider the following problem
\begin{equation}
\left\{
\begin{aligned}
& \partial_t^2 u(x, t) - \alpha^2 \partial_x^2 u(x, t) = 0, & \quad x, t \in [x_0, x_1] \times [0, T], \\
& u(x_0, t) = u(x_1, t) = 0. & \quad t \in [0, T], \\
& u(x, 0) = g_1(x), & \quad x \in [x_0, x_1], \\
& \partial_t u(x, 0) = g_2(x), & \quad x \in [x_0, x_1].  \\
\end{aligned} \label{eqn:wave}
\right.
\end{equation}
where $ x_0 = 0$, $x_1 = 6\pi$, $\alpha = 1 $ and $T=10$. The exact solution is chosen to be
\begin{equation}
u_e(x, t) = \cos \left(\frac{a \pi}{l}t \right) \sin \left( \frac{\pi}{l}x \right) + \left[ \cos \left( \frac{2a \pi}{l} t \right) + \frac{l}{2a \pi} \sin \left( \frac{2a \pi}{l}t \right) \right] \sin \left( \frac{2 \pi}{l}x \right),\quad l = x_1 - x_0. \label{eqn:wave:sol}
\end{equation}
Initial conditions $g_1(x)$ and $g_2(x)$ are chosen accordingly.

Set the default hyper-parameters $N_x=5$, $N_t=5$, $Q_x=30$, $Q_t=30$, $J_n=300$ and $N_b=1$. Numerical solutions and errors of STC and SoV are plotted in Figure~\ref{exp:fig:wave}. The $L^{\infty}$ error is smaller than $ 2.5 e-8$.
\begin{figure}[htbp]
	\centering
	\subfigure[STC.]{
		\includegraphics[width=.22\linewidth]{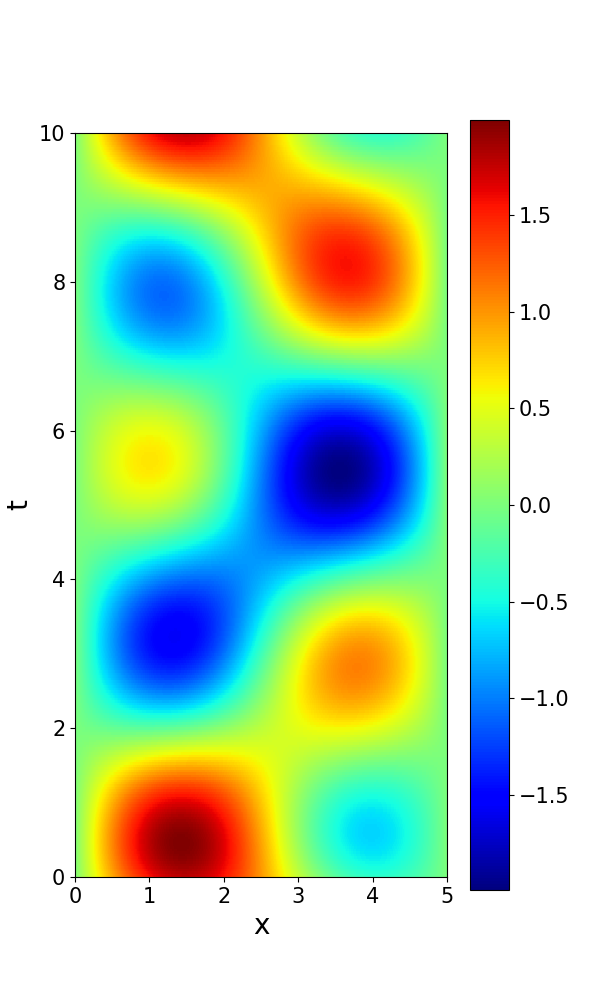}
		\includegraphics[width=.22\linewidth]{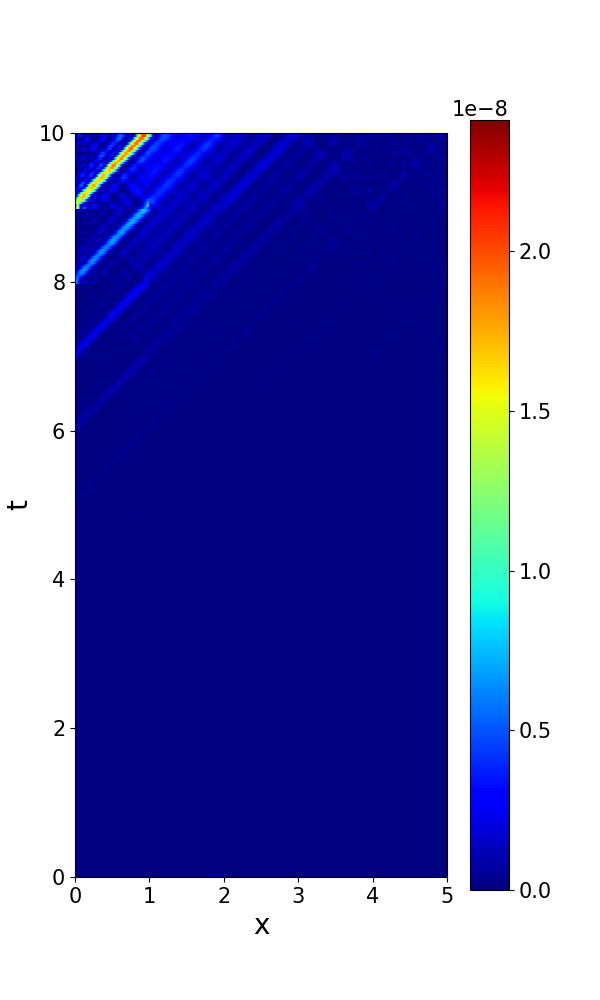}
	}
	\subfigure[SoV.]{
		\includegraphics[width=.22\linewidth]{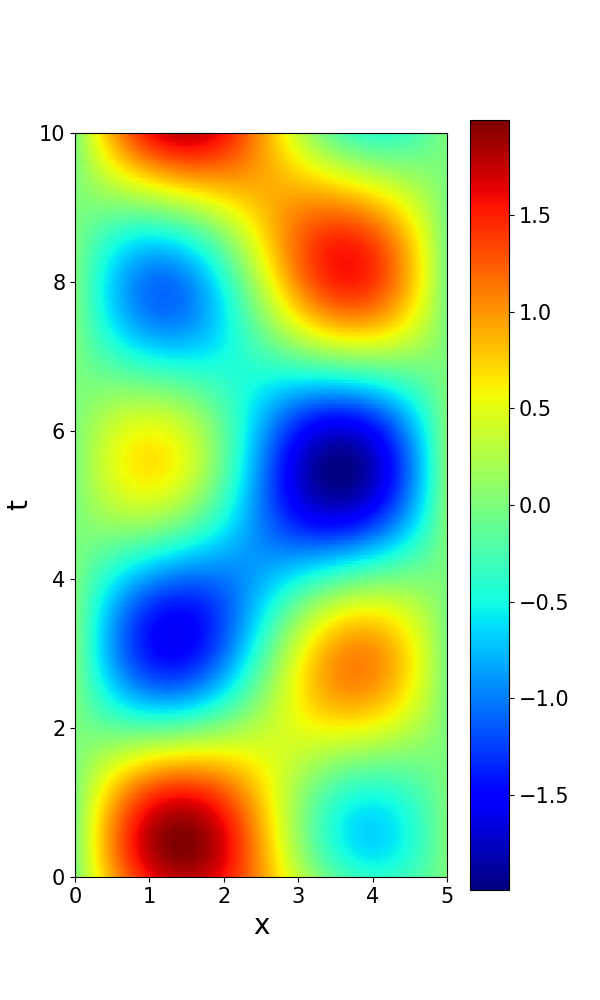}
		\includegraphics[width=.22\linewidth]{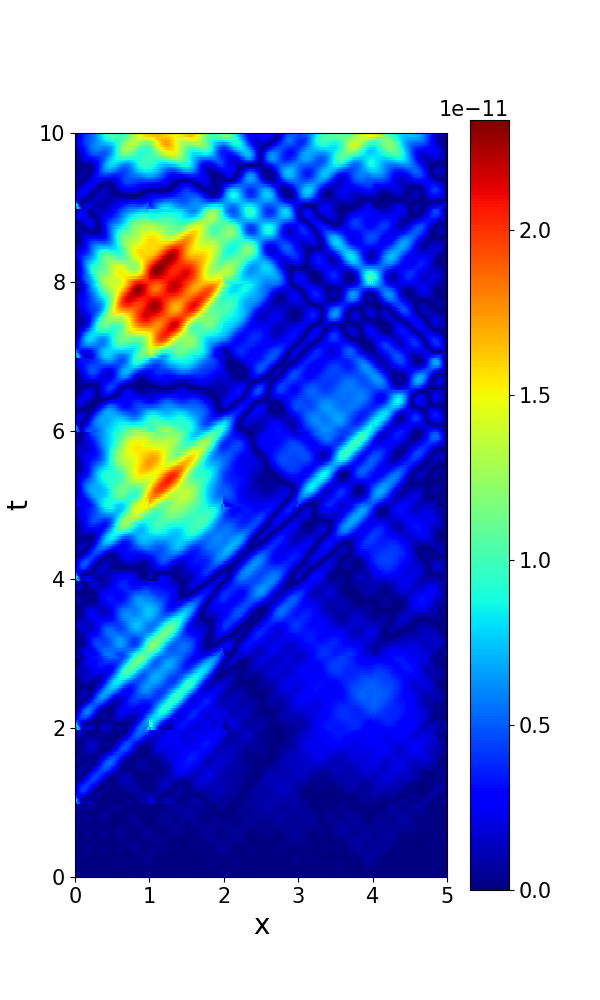}
	}
	\caption{Results for the wave equation. (a) ST-RFM with STC: Left, numerical solution; right, absolute error. (b) ST-RFM with SoV: Left, numerical solution; right, absolute error.}
	\label{exp:fig:wave}
\end{figure}

Next, we report the convergence behavior with respect to different parameters in Figure~\ref{exp:fig:wave1}(a-d). In Figure~\ref{exp:fig:wave1:a}, we set $N_b=1$, $N_x=5$, $J_n=300$, $Q_x=Q_t=30$ and $N_t=1,\cdots,5$ to verify the convergence with respect to $N_t$. In Figure~\ref{exp:fig:wave1:b}, we set $N_x=5$, $N_t=1$, $J_n=300$, $Q_x=Q_t=30$ and $N_b=1,\cdots,5$ to verify the convergence with respect to $N_b$. In Figure~\ref{exp:fig:wave1:c}, we set $N_b=5$, $N_x=5$, $N_t=1$, $Q_x=Q_t=30$ and $J_n=100,150,200,250,300$ to verify the convergence with respect to $J_n$. In Figure~\ref{exp:fig:wave1:d}, we set $N_b=5$, $N_x=5$, $N_t=1$, $J_n=300$ and $Q_x=Q_t=10,15,20,25,30$ to verify the convergence with respect to $Q_x$/$Q_t$. A clear trend of spectral accuracy is observed for the ST-RFM in both spatial and temporal directions. 

Now, we compare STC and SoV in Figure~\ref{exp:fig:wave1:e}, where we set $N_b=1$, $N_x=5$, $N_t=1, \cdots, 5$, $Q_x=Q_t=30$ and $J_n=300$. For this example, SoV performs better than STC. The comparison between the block time-marching strategy and the ST-RFM is plotted in Figure~\ref{exp:fig:wave1:f}, where we set $N_b=5$ and $N_t=1$ for block time-marching strategy and $N_b=1$ and $N_t=5$ for ST-RFM. The $L^2$ error of the solution by the block time-marching strategy increases exponentially fast with respect to the number of blocks, while the error in the ST-RFM remains almost flat over all time subdomains.

\begin{figure}[htbp]
	\centering
	\subfigure[Convergence w.r.t. $N_t$.\label{exp:fig:wave1:a}]{
		\includegraphics[width=.30\linewidth]{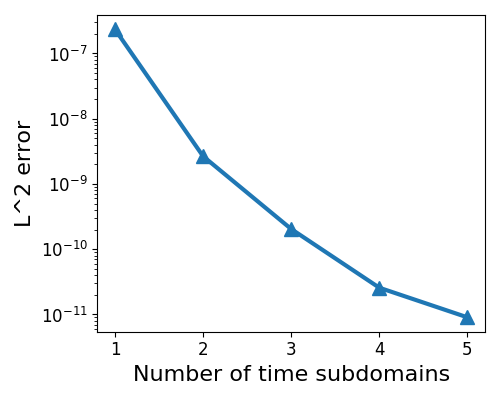}
	}
	\subfigure[Convergence w.r.t. $N_b$.\label{exp:fig:wave1:b}]{
		\includegraphics[width=.30\linewidth]{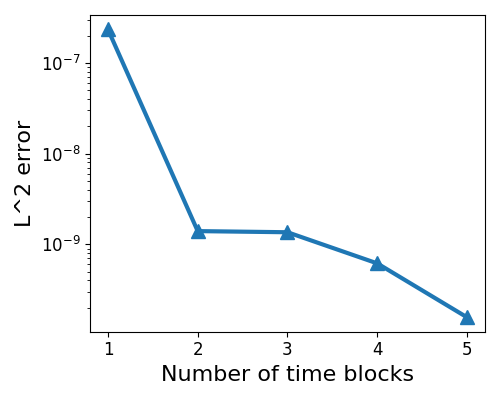}
	}
	\subfigure[Convergence w.r.t. $J_n$.\label{exp:fig:wave1:c}]{
		\includegraphics[width=.30\linewidth]{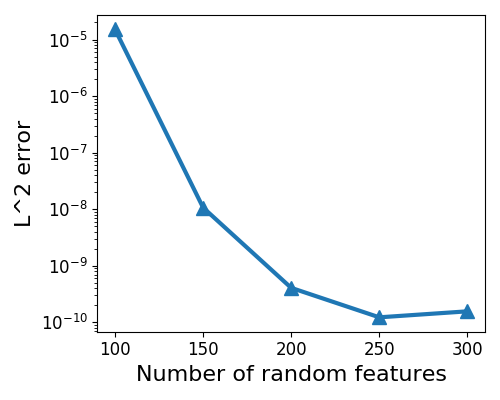}
	}
	\subfigure[Convergence w.r.t. $Q$.\label{exp:fig:wave1:d}]{
		\includegraphics[width=.30\linewidth]{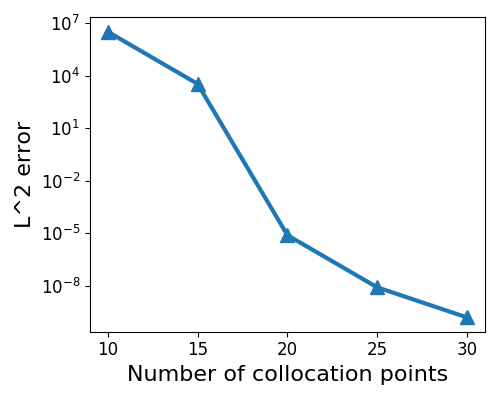}
	}
	\subfigure[STC v.s. SoV.\label{exp:fig:wave1:e}]{
		\includegraphics[width=.30\linewidth]{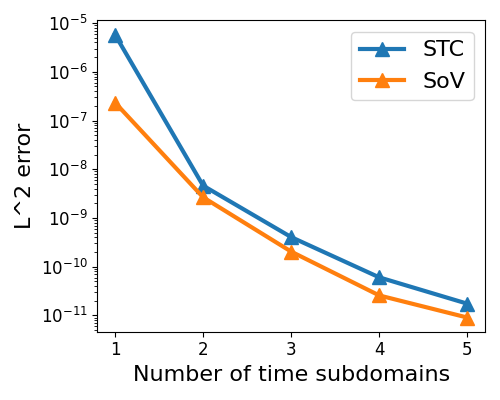}
	}
	\subfigure[Block time-marching v.s. ST-RFM.\label{exp:fig:wave1:f}]{
		\includegraphics[width=.30\linewidth]{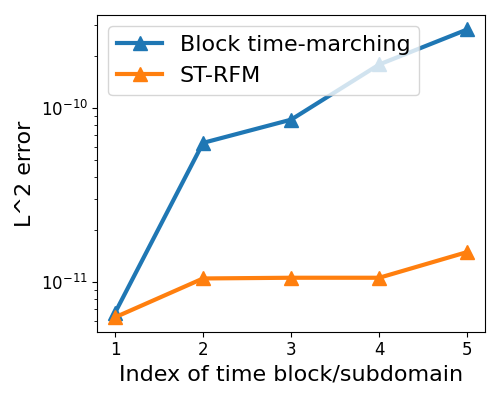}
	}
	\caption{Convergence behavior as different hyper-parameters are varied for the wave equation. (a) the number of time subdomains in the ST-RFM; (b) the number of time blocks in the block time-marching strategy; (c) the number of random features in each direction; (d) the number of collocation points in each direction; (e) comparison of STC and SoV; (f) comparison of the ST-RFM and the block time-marching strategy.}
	\label{exp:fig:wave1}
\end{figure}

\subsubsection{Schr\"{o}dinger Equation} \label{sec:num:oned:schro}

Consider the following problem
\begin{equation}
\left\{
\begin{aligned}
& i \partial_t \psi(x, t) + 0.5 \Delta \psi(x, t) = 0, & \quad x, t \in [x_0, x_1] \times [0, T], \\
& \psi(x, 0) = g(x), & \quad x \in [x_0, x_1], \\
& \psi(x_0, t) = \psi(x_1, t), & \quad t \in [0, T], \\
& \partial_x \psi(x_0, t) = \partial_x \psi(x_1, t) & \quad t \in [0, T],
\end{aligned} \label{eqn:schordinger}
\right.
\end{equation}
where $ x_0 = 0$, $x_1 = 5$ and $T=10$. The exact solution is chosen to be
\begin{equation}
\psi(x, t) = e^{-i\omega^2 t / 2} (2 \cos (\omega x) + \sin (\omega x)), \quad \omega = \frac{2 \pi}{b_1 - a_1}, \label{eqn:schordinger:sol}
\end{equation}
and $g(x)$ is chosen accordingly.

Set the default hyper-parameters $N_x=5$, $N_t=3$, $Q_x=30$, $Q_t=30$, $J_n=300$ and $N_b=1$. Numerical solutions and errors of STC and SoV are plotted in Figure~\ref{exp:fig:schrodinger}. The $L^{\infty}$ error in the ST-RFM is smaller than $2.0 e-9$.
\begin{figure}[htbp]
	\centering
	\subfigure[Real part: STC.]{
		\includegraphics[width=.22\linewidth]{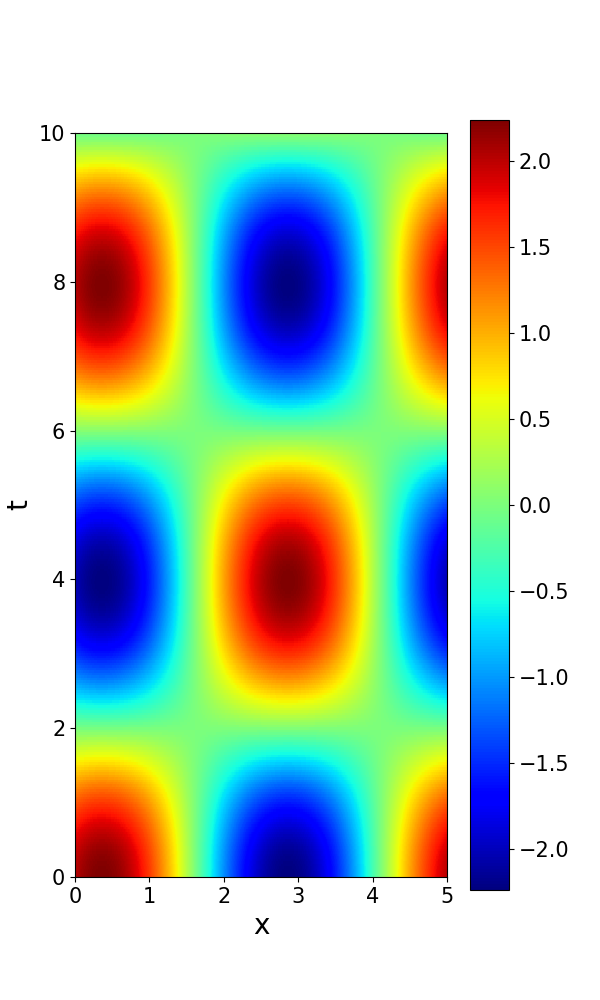}
		\includegraphics[width=.22\linewidth]{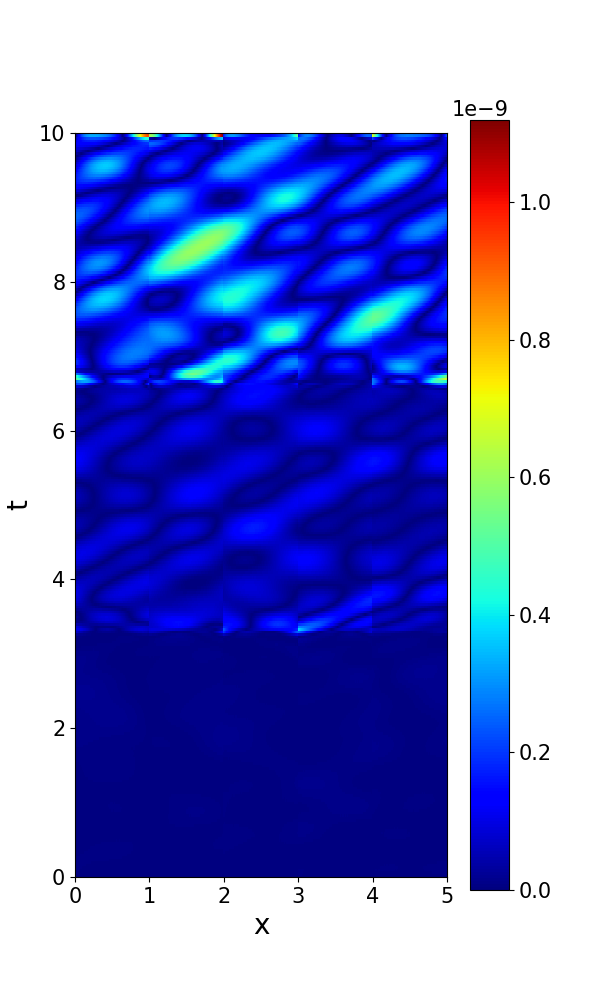}
	}
	\subfigure[Real part: SoV.]{
		\includegraphics[width=.22\linewidth]{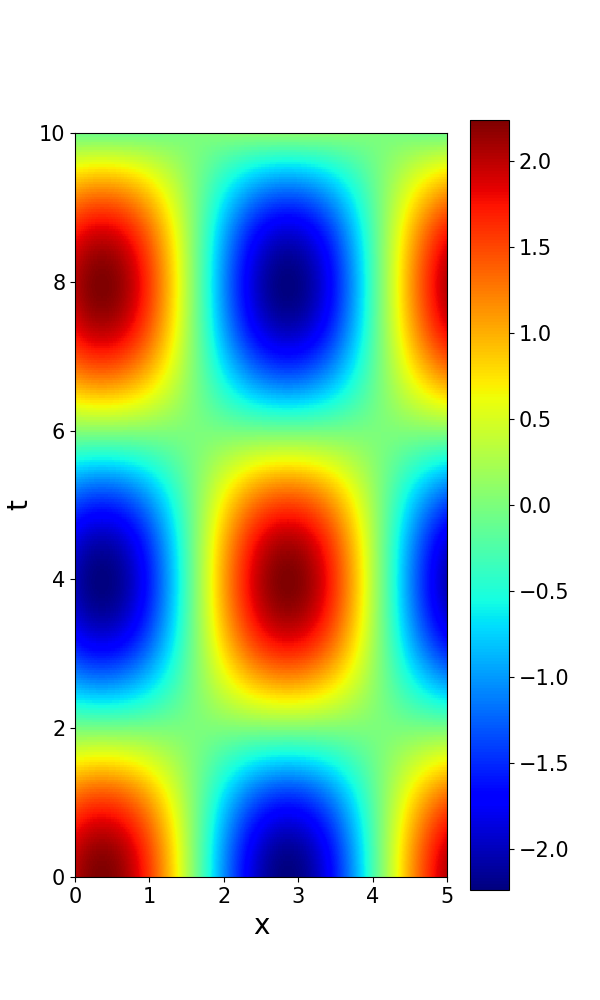}
		\includegraphics[width=.22\linewidth]{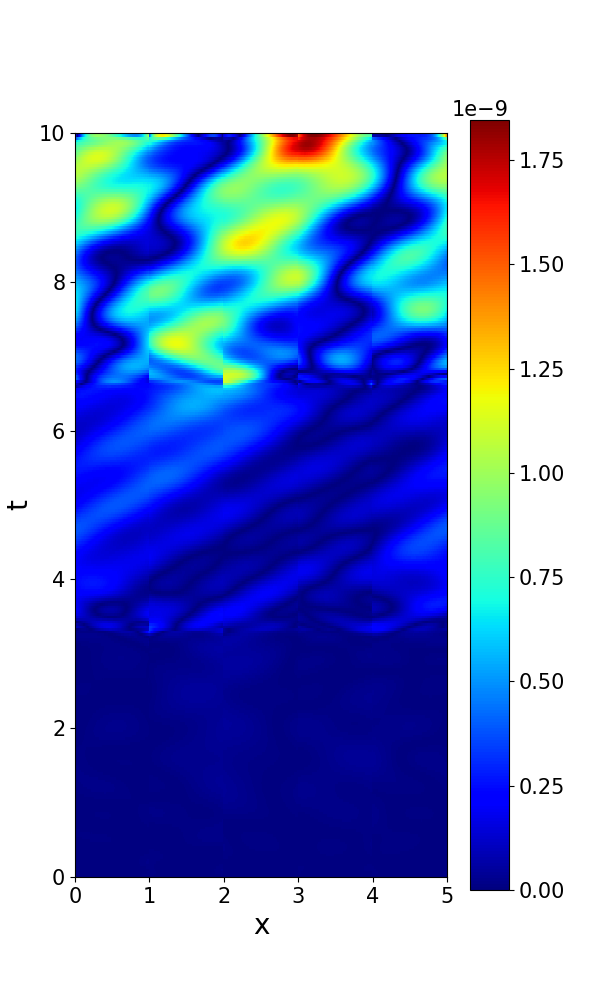}
	}
	\subfigure[Imaginary part: STC.]{
		\includegraphics[width=.22\linewidth]{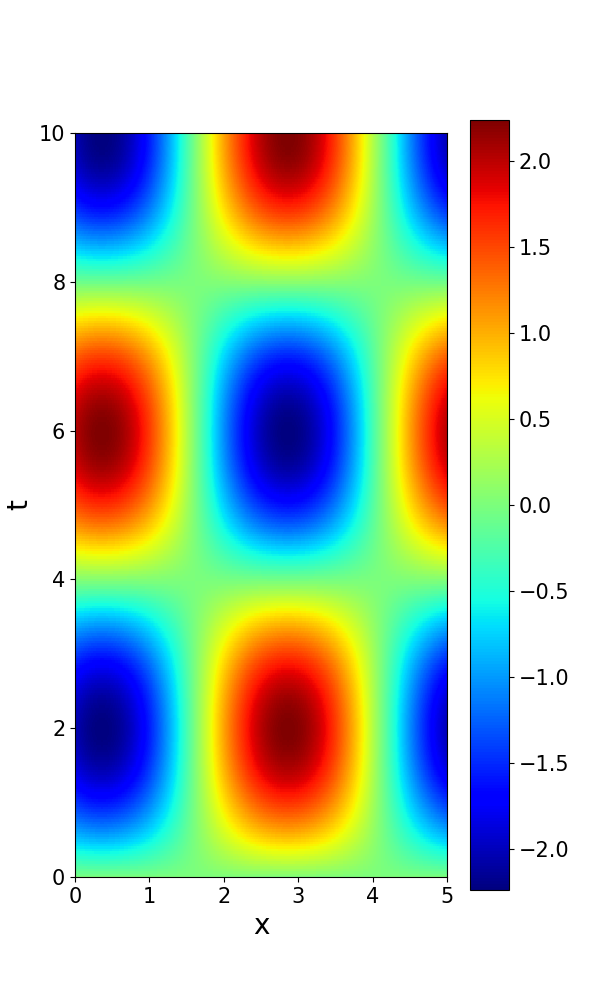}
		\includegraphics[width=.22\linewidth]{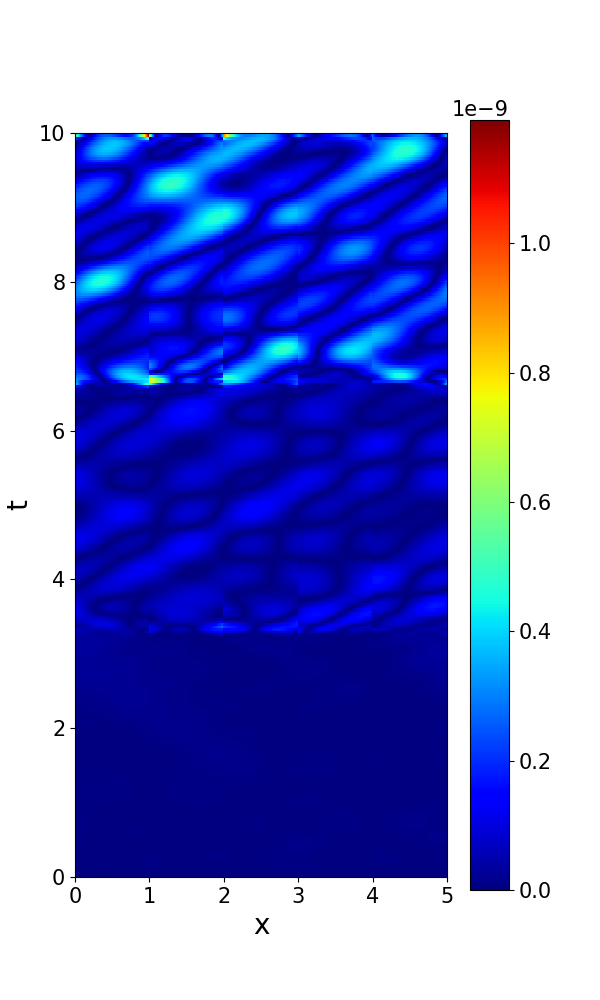}
	}
	\subfigure[Imaginary part: SoV.]{
		\includegraphics[width=.22\linewidth]{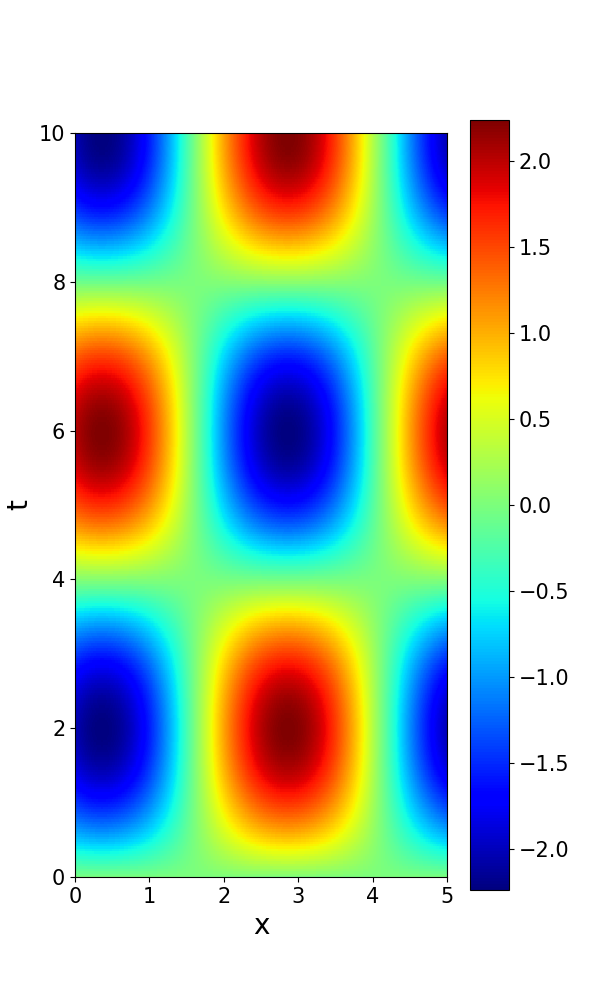}
		\includegraphics[width=.22\linewidth]{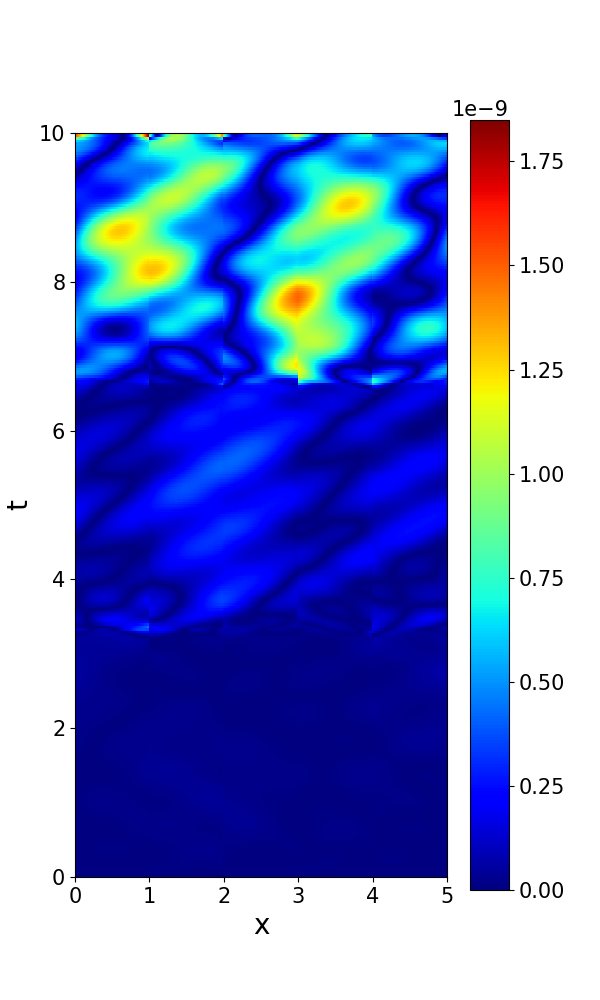}
	}
	\caption{Results for Schr\"{o}dinger equation. (a), (c) STC: Left, numerical solution; right, absolute error; (b), (d) SoV: Left, numerical solution; right, absolute error. }
	\label{exp:fig:schrodinger}
\end{figure}

Next, we report the convergence behavior with respect to different parameters in Figure~\ref{exp:fig:sch1:ur}(a-d) (real part) and Figure~\ref{exp:fig:sch2:ui}(a-d) (imaginary part). In Figure~\ref{exp:fig:sch1:ur:a} and Figure~\ref{exp:fig:sch2:ui:a}, we set $N_b=1$, $N_x=5$, $J_n=300$, $Q_x=Q_t=30$ and $N_t=1,2,3$ to verify the convergence with respect to $N_t$. In Figure~\ref{exp:fig:sch1:ur:b} and Figure~\ref{exp:fig:sch2:ui:b}, we set $N_x=5$, $N_t=1$, $J_n=300$, $Q_x=Q_t=30$ and $N_b=1,2,3$ to verify the convergence with respect to $N_b$. In Figure~\ref{exp:fig:sch1:ur:c} and Figure~\ref{exp:fig:sch2:ui:c}, we set $N_b=3$, $N_x=5$, $N_t=1$, $Q_x=Q_t=30$ and $J_n=100,150,200,250,300$ to verify the convergence with respect to $J_n$. In Figure~\ref{exp:fig:sch1:ur:d} and Figure~\ref{exp:fig:sch2:ui:d}, we set $N_b=3$, $N_x=5$, $N_t=1$, $J_n=300$ and $Q_x=Q_t=10,15,20,25,30$ to verify the convergence with respect to $Q_x$/$Q_t$. A clear trend of spectral accuracy is observed for the ST-RFM in both spatial and temporal directions.

Now, we compare STC and SoV in Figure~\ref{exp:fig:sch1:ur:e} and Figure~\ref{exp:fig:sch2:ui:e}, where we set $N_b=1$, $N_x=5$, $N_t=1, 2, 3$, $Q_x=Q_t=30$ and $J_n=300$. For this example, SoV performs better than STC. The comparison between the block time-marching strategy and the ST-RFM is plotted in Figure~\ref{exp:fig:sch1:ur:f} and Figure~\ref{exp:fig:sch2:ui:f}, where we set $N_b=3$ and $N_t=1$ for block time-marching strategy and $N_b=1$ and $N_t=3$ for ST-RFM. The $L^2$ error of the solution by the block time-marching strategy increases exponentially fast with respect to the number of blocks, while the error in the ST-RFM increases slower over all time subdomains.

\begin{figure}[htbp]
	\centering
	\subfigure[Convergence w.r.t. $N_t$.\label{exp:fig:sch1:ur:a}]{
		\includegraphics[width=.30\linewidth]{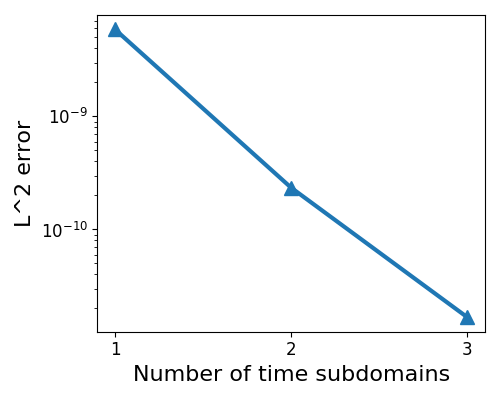}
	}
	\subfigure[Convergence w.r.t. $N_b$.\label{exp:fig:sch1:ur:b}]{
		\includegraphics[width=.30\linewidth]{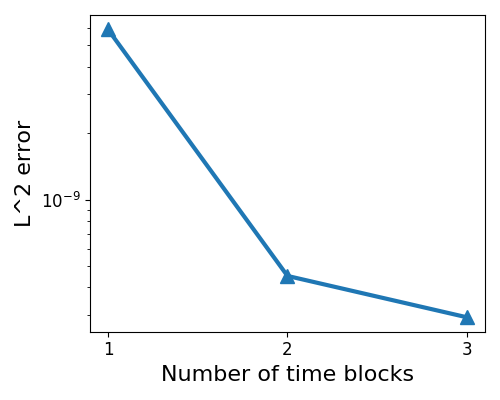}
	}
	\subfigure[Convergence w.r.t. $J_n$.\label{exp:fig:sch1:ur:c}]{
		\includegraphics[width=.30\linewidth]{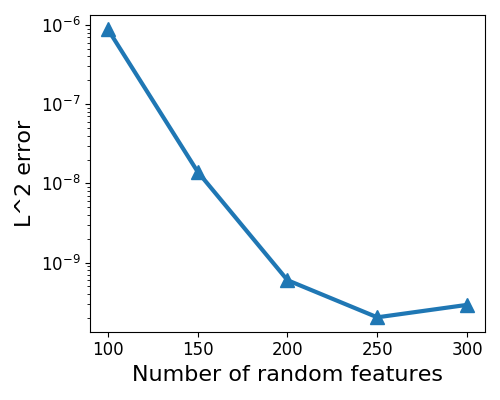}
	}
	\subfigure[Convergence w.r.t. $Q$.\label{exp:fig:sch1:ur:d}]{
		\includegraphics[width=.30\linewidth]{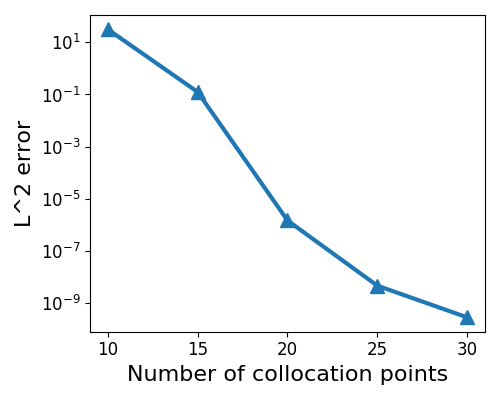}
	}
	\subfigure[STC v.s. SoV.\label{exp:fig:sch1:ur:e}]{
		\includegraphics[width=.30\linewidth]{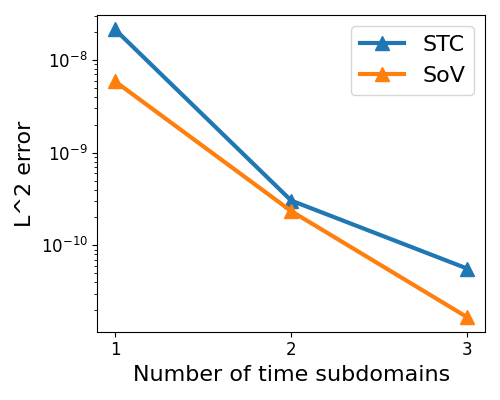}
	}
	\subfigure[Block time-marching v.s. ST-RFM.\label{exp:fig:sch1:ur:f}]{
		\includegraphics[width=.30\linewidth]{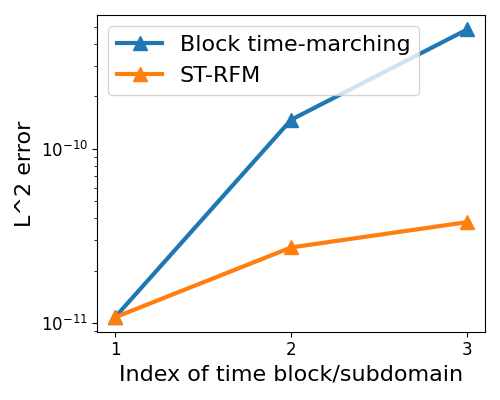}
	}
	\caption{Convergence behavior as different hyper-parameters are varied for Schr\"{o}dinger equation (real part). (a) the number of time subdomains for ST-RFM; (b) the number of time blocks in the block time-marching strategy; (c) the number of random features in each direction; (d) the number of collocation points in each direction; (e) comparison of STC and SoV; (f) comparison of the ST-RFM and the block time-marching strategy.}
	\label{exp:fig:sch1:ur}
\end{figure}

\begin{figure}[htbp]
	\centering
	\subfigure[Convergence w.r.t. $N_t$.\label{exp:fig:sch2:ui:a}]{
		\includegraphics[width=.30\linewidth]{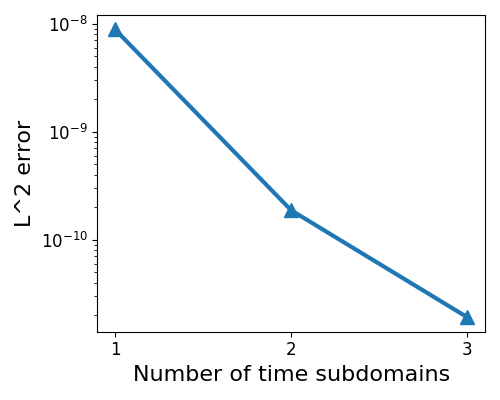}
	}
	\subfigure[Convergence w.r.t. $N_b$.\label{exp:fig:sch2:ui:b}]{
		\includegraphics[width=.30\linewidth]{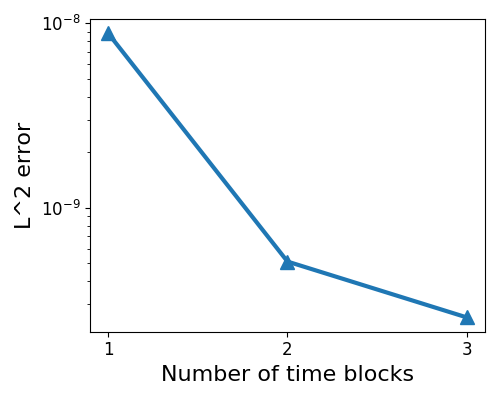}
	}
	\subfigure[Convergence w.r.t. $J_n$.\label{exp:fig:sch2:ui:c}]{
		\includegraphics[width=.30\linewidth]{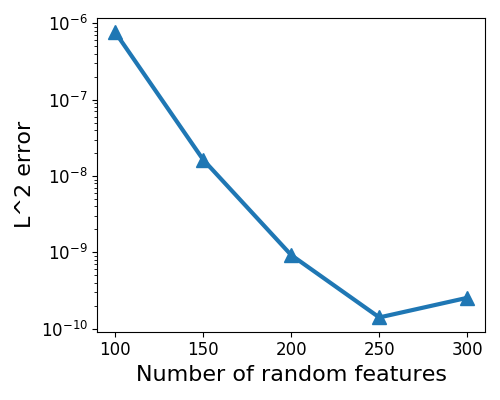}
	}
	\subfigure[Convergence w.r.t. $Q$.\label{exp:fig:sch2:ui:d}]{
		\includegraphics[width=.30\linewidth]{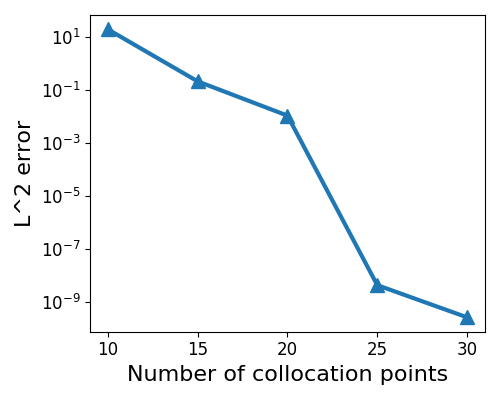}
	}
	\subfigure[STC v.s. SoV.\label{exp:fig:sch2:ui:e}]{
		\includegraphics[width=.30\linewidth]{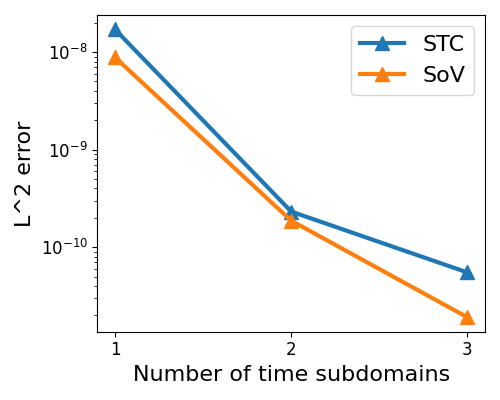}
	}
	\subfigure[Block time-marching v.s. ST-RFM.\label{exp:fig:sch2:ui:f}]{
		\includegraphics[width=.30\linewidth]{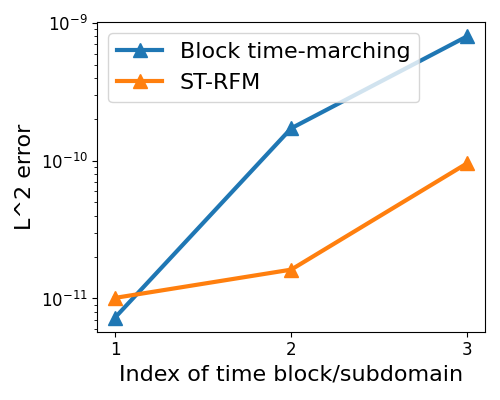}
	}
	\caption{Convergence behavior as different hyper-parameters are varied for Schr\"{o}dinger equation (imaginary part). (a) the number of time subdomains for ST-RFM; (b) the number of time blocks in the block time-marching strategy; (c) the number of random features in each direction; (d) the number of collocation points in each direction; (e) comparison of STC and SoV; (f) comparison of the ST-RFM and the block time-marching strategy.}
	\label{exp:fig:sch2:ui}
\end{figure}

\subsubsection{Verification of Theoretical Results} \label{sec:num:oned:verify}

First, we verify Assumption~\ref{assum:method:2} for all one-dimensional problems using the sufficient condition that the number of different eigenvalues of $\mathbf{B}$, denoted by \#$\mathrm{unique}\; \lambda_k$, equals to the number of random features $J_n$. Results are recorded in Table~\ref{exp:tab:Beigen}. The number of unique eigenvalues of $\mathbf{B}$ equals to $J_n$ for all one-dimensional problems and Assumption~\ref{assum:method:2} is verified.
\begin{table}[htbp]
	\centering
	\begin{tabular}{cc|cc|cc}
		\hline
		\multicolumn{2}{c|}{Heat} & \multicolumn{2}{c|}{Wave} & \multicolumn{2}{c}{Schr\"{o}dinger} \\
		\hline
		$J_n$ & \#unique $\lambda_k$ & $J_n$ & \#unique $\lambda_k$ & $J_n$ & \#unique $\lambda_k$ \\
		\hline
		100 & 100 & 100 & 100 & 200 & 200 \\
		150 & 150 & 150 & 150 & 300 & 300 \\
		200 & 200 & 200 & 200 & 400 & 400 \\
		250 & 250 & 250 & 250 & 500 & 500 \\
		300 & 300 & 300 & 300 & 600 & 600 \\
		350 & 350 & 350 & 350 & 700 & 700 \\
		400 & 400 & 400 & 400 & 800 & 800 \\
		\hline
	\end{tabular}
	\caption{Eigenvalue distribution of $\mathbf{B}$ in one-dimensional problems for the verification of Assumption~\ref{assum:method:2}.}
	\label{exp:tab:Beigen}
\end{table}

Next, we use the wave equation to verify the error estimate in Theorem~\ref{coro:method:3}. For the block time-marching strategy, we set $N_b=20, N_x=2, N_t=1, J_n=100, Q_{x}=10, Q_{t}=10$, and for ST-RFM, we set $N_b=1, N_x=2, N_t=20, J_n=100, Q_x=10, Q_t=10$. Eigenvalues of $\mathbf{B}$ are plotted in Figure~\ref{fig:exp:conv:a}, and the $L^2$ error is plotted in Figure~\ref{fig:exp:conv:b}. 

The largest modulus of eigenvalues of $\mathbf{B}$ from Figure~\ref{fig:exp:conv:a} is $187.75$, indicating that $\beta(x) \approx 13.70^{x}$, while the $L^2$ error in the block time-marching strategy increases exponentially  with the rate $17.70$ from Figure~\ref{fig:exp:conv:b}. Therefore, the lower bound estimate in Theorem~\ref{thm:method:4} is verified. In addition, the $L^2$ error in the ST-RFM remains almost flat, which is mainly constrolled by the approximation power of space-time random features.
\begin{figure}[htbp]
	\centering
	\subfigure[Modulus of $\lambda_k$.\label{fig:exp:conv:a}]{
		\includegraphics[width=.45\linewidth]{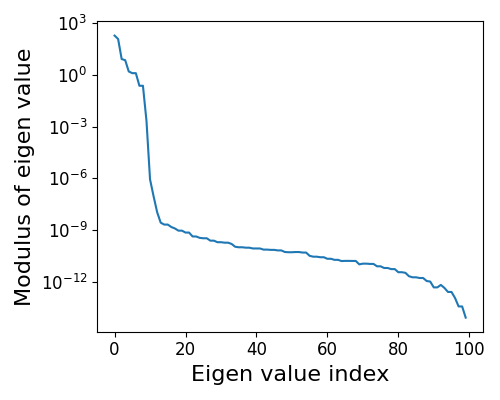}
	}
	\subfigure[$L^2$ error.\label{fig:exp:conv:b}]{
		\includegraphics[width=.45\linewidth]{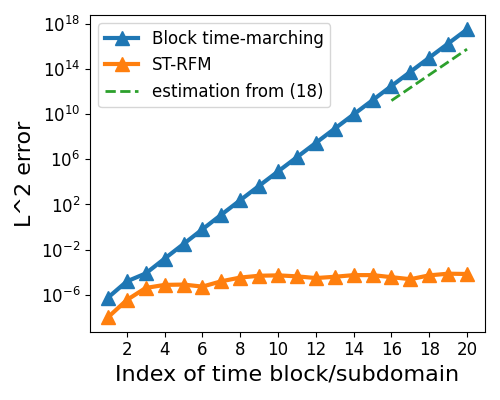}
	}
	\caption{(a) Eigenvalue distribution for the matrix $\mathbf{B}$; (b) $L^2$ error for the wave equation. We see a good agreement between Theorem \ref{thm:method:4},  Theorem \ref{thm:method:5}, and the actual numerical results.}
	\label{fig:exp:conv}
\end{figure}

\subsection{Two-dimensional problems} \label{sec:num:twod}

\subsubsection{Membrane Vibration over a Simple Geometry} \label{sec:num:twod:simple}

Consider the following problem
\begin{equation}
\left\{
\begin{aligned}
& \partial_{tt} u(x, y, t) - \alpha^2 \Delta u(x, y, t) = 0, & \quad (x, y), t \in \Omega \times [0, T], \\
& u(x, y, 0) = \phi(x, y), & \quad (x, y) \in \Omega, \\
& \partial_t u(x, y, 0) = \psi(x, y), & \quad (x, y) \in \Omega, \\
& u(x, y, t) = 0, & \quad (x, y) \in \partial \Omega \times [0, T], \\
\end{aligned}
\right.
\label{eqn:membrane}
\end{equation}
where $\Omega = [0, 5] \times [0, 4]$, $\alpha = 1$ and $T = 10$. The exact solution is chosen to be
\begin{equation}
u_e(x, y, t) = \sin(\mu x) \sin(\nu y) (2 \cos(\lambda t) + \sin(\lambda t)), \qquad \mu = \frac{2 \pi}{x_1 - x_0}, \nu = \frac{2 \pi}{y_1 - y_0}, \lambda = \sqrt{\mu^2 + \nu ^ 2},
\label{eqn:membrane:sol}
\end{equation}
and $\phi(x, y)$ and $\psi(x, y)$ are chosen accordingly.

Set the default hyper-parameters $N_x=N_y=2$, $N_t=5$, $Q_x=Q_y=Q_t=30$, $J_n=400$ and $N_b=1$. We report the convergence behavior with respect to different parameters in Figure~\ref{fig:exp:mem:conv}(a-c). In Figure~\ref{fig:exp:mem:conv:a}, we set $N_b=1$, $N_x=N_y=2$, $J_n=400$, $Q_x=Q_y=Q_t=30$ and $N_t=1,\cdots,6$ to verify the convergence with respect to $N_t$. In Figure~\ref{fig:exp:mem:conv:b}, we set $N_b=1$, $N_x=N_y=N_t=2$, $Q_x=Q_y=Q_t=30$ and $J_n=100,150,200,250,300,350,400$ to verify the convergence with respect to $J_n$. In Figure~\ref{fig:exp:mem:conv:c}, we set $N_b=5$, $N_x=N_y=N_t=5$, $J_n=400$ and $Q_x=Q_y=Q_t=10,15,20,25,30$ to verify the convergence with respect to the number of collocation points. A clear trend of spectral accuracy is observed for the ST-RFM in both spatial and temporal directions. Now, we compare STC and SoV in Figure~\ref{fig:exp:mem:conv:d}, when $N_b = 1$, $N_x = N_y = 2$, $N_t = 1, \cdots, 6$, $Q_x = Q_y = Q_t = 30$ and $J_n = 400$. Performances of STC and SoV are close.
\begin{figure}[htbp]
	\centering
	\subfigure[Convergence w.r.t. $N_t$. \label{fig:exp:mem:conv:a}]{
		\includegraphics[width=.23\linewidth]{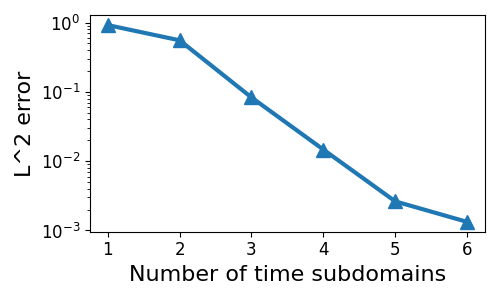}
	}
	\subfigure[Convergence w.r.t. $J_n$. \label{fig:exp:mem:conv:b}]{
		\includegraphics[width=.23\linewidth]{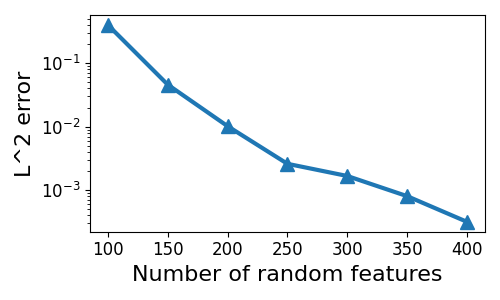}
	}
	\subfigure[Convergence w.r.t. $Q$. \label{fig:exp:mem:conv:c}]{
		\includegraphics[width=.23\linewidth]{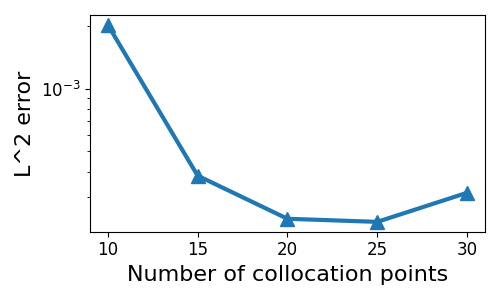}
	}
	\subfigure[STC v.s. SoV. \label{fig:exp:mem:conv:d}]{
		\includegraphics[width=.23\linewidth]{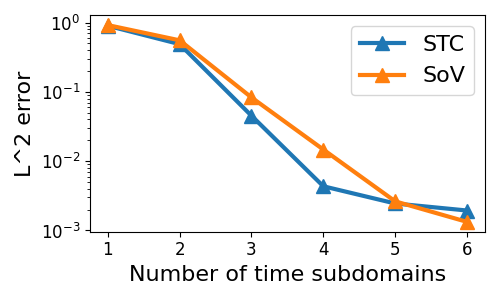}
	}
	\caption{Convergence behavior as different hyper-parameters are varied for the membrane vibration equation~\eqref{eqn:membrane}.} \label{fig:exp:mem:conv}
\end{figure}

\subsubsection{Membrane vibration over a Complex Geometry} \label{sec:num:twod:complex}

Consider a complex geometry $\Omega$ in Figure~\ref{exp:fig:geo}
\begin{figure}[ht]
	\centering
	\includegraphics[width=.5\linewidth]{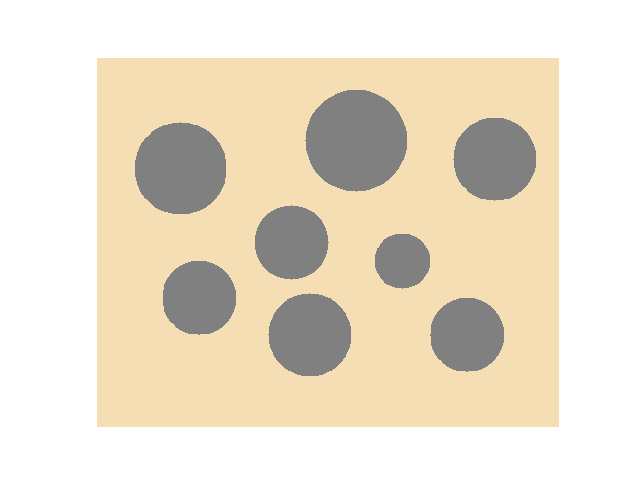}
	\caption{The complex geometry for the membrane vibration problem.}
	\label{exp:fig:geo}
\end{figure}
and the following the membrane vibration problem
\begin{equation}
\left\{
\begin{aligned}
& \partial_{tt} u(x, y, t) - \alpha^2 \Delta u(x, y, t) = 1, & \quad (x, y), t \in \Omega \times [0, T], \\
& u(x, y, 0) = \phi(x, y), & \quad (x, y) \in \Omega, \\
& \partial_t u(x, y, 0) = \psi(x, y), & \quad (x, y) \in \Omega, \\
& u(x, y, t) = 0, & \quad (x, y) \in \partial \Omega \times [0, T], \\
\end{aligned}
\right.
\label{eqn:membrane-cg}
\end{equation}
where $T=10.0$, $\alpha = 1$. The same $\phi(x, y)$ and $\psi(x, y)$ are used as in Section \ref{sec:num:twod:simple}. Set the default hyper-parameters $N_x=5$, $N_t=1$, $Q_x=30$, $Q_t=30$, $J_n=400$ and $N_b=5$. Numerical solutions of SoV at different times are plotted in Figure~\ref{exp:fig:membrane-cg}.
\begin{figure}[htbp]
	\centering
	\subfigure[Time=0.0.]{
		\includegraphics[width=.45\linewidth]{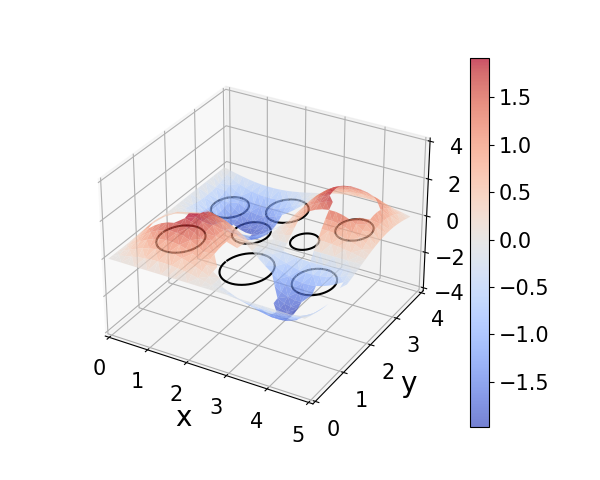}
	}
	\subfigure[Time=2.5.]{
		\includegraphics[width=.45\linewidth]{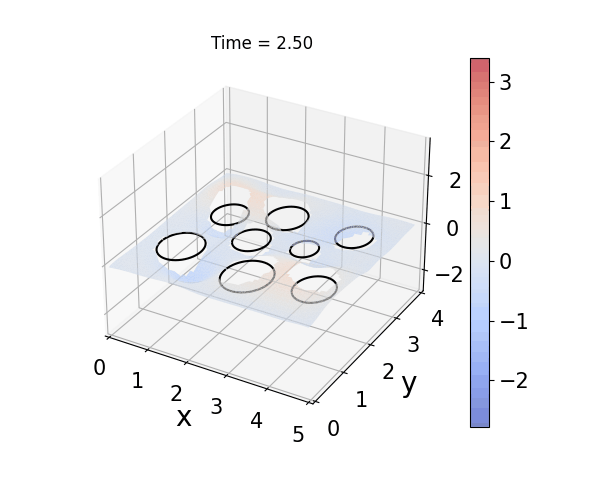}
	}
	\subfigure[Time=5.0.]{
		\includegraphics[width=.45\linewidth]{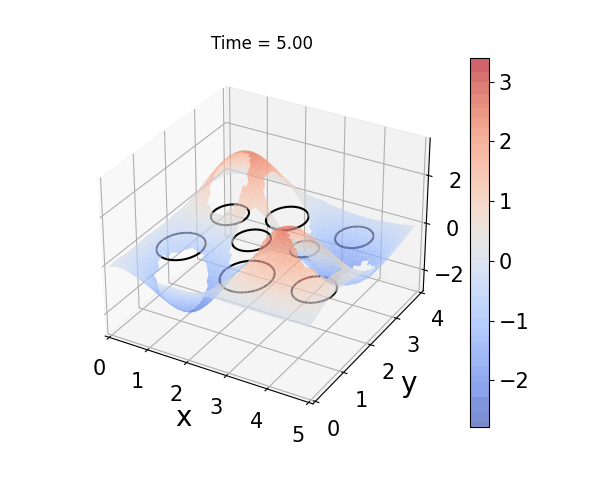}
	}
	\subfigure[Time=7.5.]{
		\includegraphics[width=.45\linewidth]{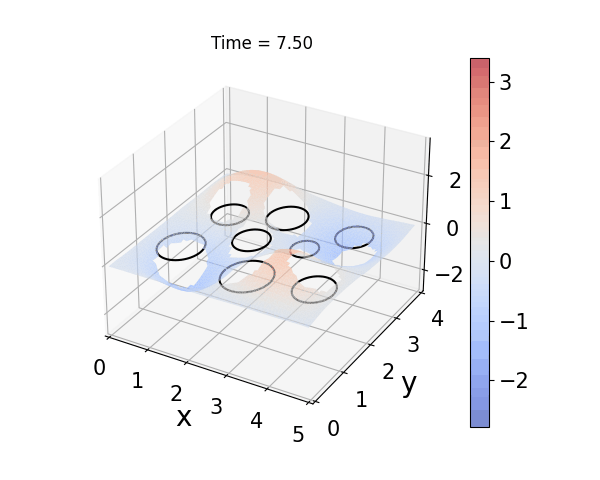}
	}
	\caption{Numerical solution of the membrane vibration problem over a complex geometry at different times.}
	\label{exp:fig:membrane-cg}
\end{figure}

No exact solution is available and numerical convergence is shown here. $L^2$ errors of solution $\| u_M - u_{ref} \|$ with respect to different parameters is recorded in Table~\ref{exp:tab:two1}, Table~\ref{exp:tab:two2} and Table~\ref{exp:tab:two3}. The solution with the largest degrees of freedom is chosen as the reference solution. As the parameter varies, the numerical solution converges to the reference solution, indicating the robustness of the ST-RFM in solving time-dependent partial differential equations over a complex geometry.
\begin{table}[htbp]
	\centering
	\begin{tabular}{cccccc}
		\hline
		\# Time Blocks & $N_x$/$N_y$ & $N_t$ & $J_n$ & $Q_x$/$Q_y$/$Q_t$ & Solution error \\
		\hline
		\hline
		1 & 2 & 1 & 400 & 30 & 5.98e-1 \\
		1 & 2 & 2 & 400 & 30 & 7.07e-2 \\
		1 & 2 & 3 & 400 & 30 & 4.57e-2 \\
		1 & 2 & 4 & 400 & 30 & 3.89e-2 \\
		1 & 2 & 5 & 400 & 30 & 3.31e-2 \\
		1 & 2 & 6 & 400 & 30 & Reference \\
		\hline
	\end{tabular}
	\caption{$L^2$ error with respect to the number of time sub-domains.}
	\label{exp:tab:two1}
\end{table}

\begin{table}[htbp]
	\centering
	\begin{tabular}{cccccc}
		\hline
		\# Time Blocks & $N_x$/$N_y$ & $N_t$ & $J_n$ & $Q_x$/$Q_y$/$Q_t$ & Solution error \\
		\hline
		\hline
		5 & 2 & 2 & 400 & 10 & 7.71e-2 \\
		5 & 2 & 2 & 400 & 15 & 2.16e-2 \\
		5 & 2 & 2 & 400 & 20 & 1.44e-2 \\
		5 & 2 & 2 & 400 & 25 & 1.25e-2 \\
		5 & 2 & 2 & 400 & 30 & Reference \\
		\hline
	\end{tabular}
	\caption{$L^2$ error with respect to the number of collocation points.}
	\label{exp:tab:two2}
\end{table}

\begin{table}[htbp]
	\centering
	\begin{tabular}{cccccc}
		\hline
		\# Time Blocks & $N_x$/$N_y$ & $N_t$ & $J_n$ & $Q_x$/$Q_y$/$Q_t$ & Solution error \\
		\hline
		\hline
		5 & 2 & 2 & 100 & 30 & 1.05e-1 \\
		5 & 2 & 2 & 150 & 30 & 5.25e-2 \\
		5 & 2 & 2 & 200 & 30 & 3.96e-2 \\
		5 & 2 & 2 & 250 & 30 & 2.82e-2 \\
		5 & 2 & 2 & 300 & 30 & 1.80e-2 \\
		5 & 2 & 2 & 350 & 30 & 1.46e-2 \\
		5 & 2 & 2 & 400 & 30 & Reference \\
		\hline
	\end{tabular}
	\caption{$L^2$ error with respect to the number of random features.}
	\label{exp:tab:two3}
\end{table}

\section{Concluding Remarks} \label{sec:con}
In this work, we study numerical algorithms for solving time-dependent partial differential equations in the framework of the random feature method.	
Two types of random feature functions are considered: space-time concatenation random feature functions (STC) and space-time separation-of-variables random feature functions (SoV). A space-time partition of unity is used to piece together local random feature functions to approximate the solution. 	
We tested these ideas for a number of time-dependent problems. Our numerical results show that ST-RFM with both STC and SoV has  spectral accuracy in	
space and time. The error in ST-RFM remains almost flat as the number of time subdomains increases, while the error grows exponentially fast when the block time-marching strategy is used. Consistent theoretical error estimates are also proved. A two-dimensional problem over a complex geometry is used to show that the method is insensitive to the complexity of the underlying domain.

\section*{Acknowledgments}
The work is supported by National Key R\&D Program of China (No. 2022YFA1005200 and No. 2022YFA1005203), NSFC Major Research Plan -  Interpretable and General-purpose Next-generation Artificial Intelligence (No. 92270001 and No. 92270205), Anhui Center for Applied Mathematics, and the Major Project of Science \& Technology of Anhui Province (No. 202203a05020050).

\bibliographystyle{amsplain}

\begin{thebibliography}{10}

\bibitem{francesco2021extreme}
Francesco Calabr\`{o}, Gianluca Fabiani, and Constantinos Siettos,
  \emph{Extreme learning machine collocation for the numerical solution of
  elliptic pdes with sharp gradients}, Computer Methods in Applied Mechanics
  and Engineering \textbf{387} (2021), no.~1, 114--188.

\bibitem{chen2022bridging}
Jingrun Chen, Xurong Chi, Weinan E, and Zhouwang Yang, \emph{Bridging
  traditional and machine learning-based algorithms for solving pdes: The
  random feature method}, Journal of Machine Learning \textbf{1} (2022), no.~3,
  268--298.

\bibitem{cybenko1989approximation}
George Cybenko, \emph{Approximation by superpositions of a sigmoidal function},
  Mathematics of control, signals and systems \textbf{2} (1989), no.~4,
  303--314.

\bibitem{dong2021local}
Suchuan Dong and Zongwei Li, \emph{Local extreme learning machines and domain
  decomposition for solving linear and nonlinear partial differential
  equations}, Computer Methods in Applied Mechanics and Engineering
  \textbf{387} (2021), no.~1, 114--129.

\bibitem{weinan2018ritz}
Weinan E and Yu~Bing, \emph{The deep ritz method: A deep learning-based
  numerical algorithm for solving variational problems}, Communications in
  Mathematics and Statistics \textbf{6} (2018), no.~1, 1--12.

\bibitem{EHJ2017}
Weinan E, Jiequn Han, and Arnulf Jentzen, \emph{Deep learning-based numerical
  methods for high-dimensional parabolic partial differential equations and
  backward stochastic differential equations}, Communications in Mathematics
  and Statistics \textbf{5} (2017), no.~4, 349--380.

\bibitem{weinan2021algorithms}
Weinan E, Jiequn Han, and Arnulf Jentzen, \emph{Algorithms for solving high
  dimensional pdes: from nonlinear monte carlo to machine learning},
  Nonlinearity \textbf{35} (2021), no.~1, 278.

\bibitem{goodfellow2016deep}
Ian Goodfellow, Yoshua Bengio, and Aaron Courville, \emph{Deep learning}, MIT
  Press, 2016.

\bibitem{HJE2018}
Jiequn Han, Arnulf Jentzen, and Weinan E, \emph{Solving high-dimensional
  partial differential equations using deep learning}, Proceedings of the
  National Academy of Sciences \textbf{115} (2018), no.~34, 8505--8510.

\bibitem{huang2006extreme}
Guang-Bin Huang, Qin-Yu Zhu, and Chee-Kheong Siew, \emph{Extreme learning
  machine: Theory and applications}, Neurocomputing \textbf{70} (2006), no.~1,
  489--501.

\bibitem{leveque2007finite}
Randall~J LeVeque, \emph{Finite difference methods for ordinary and partial
  differential equations: steady-state and time-dependent problems}, SIAM,
  2007.

\bibitem{Neal1995BayesianLF}
Radford~M. Neal, \emph{Bayesian learning for neural networks}, Ph.D. thesis,
  University of Toronto, 1995.

\bibitem{NIPS2007_013a006f}
Ali Rahimi and Benjamin Recht, \emph{Random features for large-scale kernel
  machines}, Advances in Neural Information Processing Systems (NIPS), vol.~20,
  Curran Associates, Inc., 2007.

\bibitem{raissi2019pinn}
M.~Raissi, P.~Perdikaris, and G.E. Karniadakis, \emph{Physics-informed neural
  networks: A deep learning framework for solving forward and inverse problems
  involving nonlinear partial differential equations}, Journal of Computational
  Physics \textbf{378} (2019), no.~1, 686--707.

\bibitem{shen2011spectral}
Jie Shen, Tao Tang, and Li-Lian Wang, \emph{Spectral methods: algorithms,
  analysis and applications}, vol.~41, Springer Science \& Business Media,
  2011.

\bibitem{justin2018dgm}
Justin Sirignano and Konstantinos Spiliopoulos, \emph{Dgm: A deep learning
  algorithm for solving partial differential equations}, Journal of
  Computational Physics \textbf{375} (2018), no.~1, 1339--1364.

\bibitem{thomee2007galerkin}
Vidar Thom\'{e}e, \emph{Galerkin finite element methods for parabolic
  problems}, vol.~25, Springer Science \& Business Media, 2007.

\bibitem{yang2018novel}
Yunlei Yang, Muzhou Hou, and Jianshu Luo, \emph{A novel improved extreme
  learning machine algorithm in solving ordinary differential equations by
  legendre neural network methods}, Advances in Difference Equations
  \textbf{2018} (2018), no.~1, 1--24.

\bibitem{zang2020wan}
Yaohua Zang, Gang Bao, Xiaojing Ye, and Haomin Zhou, \emph{Weak adversarial
  networks for high-dimensional partial differential equations}, Journal of
  Computational Physics \textbf{411} (2020), no.~1, 109409.

\end{thebibliography}

\providecommand{\bysame}{\leavevmode\hbox to3em{\hrulefill}\thinspace}
\providecommand{\MR}{\relax\ifhmode\unskip\space\fi MR }
\providecommand{\MRhref}[2]{%
  \href{http://www.ams.org/mathscinet-getitem?mr=#1}{#2}
}
\providecommand{\href}[2]{#2}

\end{document}